\def\titlerunning#1{\gdef\titrun{#1}}
\def\author#1{\gdef\autrun{\def\and{\unskip, }#1}\gdef\@author{#1}}
\def\address#1{{\def\and{\\\hspace*{18pt}}\renewcommand{\thefootnote}{}%
\footnote {#1}}%
\markboth{\autrun}{\titrun}}
\def\email#1{e-mail: #1}
\def\subjclass#1{{\renewcommand{\thefootnote}{}%
\footnote{\emph{Mathematics Subject Classification (2010):} #1}}}
\def\keywords#1{\par\medskip
\noindent\textbf{Keywords.} #1}
\newtheorem{lemma}{Lemma}[section]
\newtheorem{proposition}[lemma]{Proposition}
\newtheorem{theorem}[lemma]{Theorem}
\newtheorem{corollary}[lemma]{Corollary}
\theoremstyle{definition}
\newtheorem{definition}[lemma]{Definition}
\newtheorem{example}[lemma]{Example}
\theoremstyle{remark}
\newtheorem{remark}[lemma]{Remark}
\newcommand{\ga}{\gamma}
\newcommand{\la}{\lambda}
\renewcommand{\hat}{\widehat}
\numberwithin{equation}{section} \numberwithin{table}{section}
\newcommand{\diam}{\mathrm{diam}\,}
\newcommand{\vvv}{{|\!|\!|}}
\newcommand{\tr}{\mathrm{tr}\,}
\begin{document}

\baselineskip=17pt

\titlerunning{A Devil's staircase from joint spectral radii}

\title{On a Devil's staircase associated to the joint spectral radii of a family of pairs of matrices}
\author{Ian D. Morris \and Nikita Sidorov}

\date{\today}

\maketitle

\address{Department of Mathematics, University of Surrey, Guildford GU2 7XH, United Kingdom; \email{ian.morris.ergodic@gmail.com}
\and
School of Mathematics, University of Manchester, Oxford Road, Manchester M13 9PL, United Kingdom; \email{sidorov@manchester.ac.uk}}

\subjclass{Primary 15A18, 15A60; secondary 37B10, 65K10, 68R15}

\begin{abstract}
The joint spectral radius of a finite set of real $d \times d$ matrices is defined to be the maximum possible exponential rate of growth of products of matrices drawn from that set. In previous work with K.~G.~Hare and J.~Theys we showed that for a certain one-parameter family of pairs of matrices, this maximum possible rate of growth is attained along Sturmian sequences with a certain characteristic ratio which depends continuously upon the parameter. In this note we answer some open questions from that paper by showing that the dependence of the ratio function upon the parameter takes the form of a Devil's staircase. We show in particular that this Devil's staircase attains every rational value strictly between $0$ and $1$ on some interval, and attains irrational values only in a set of Hausdorff dimension zero. This result generalises to include certain one-parameter families considered by other authors. We also give explicit formulas for the preimages of both rational and irrational numbers under the ratio function, thereby establishing a large family of pairs of matrices for which the joint spectral radius may be calculated exactly.

 \keywords{Joint spectral radius, Devil's staircase, finiteness conjecture, Sturmian sequence, balanced word.}

\end{abstract}

\section{Introduction}

The spectral radius of a $d \times d$ real matrix $A$, which we denote by $\rho(A)$, is defined to be the maximum of the moduli of the eigenvalues of $A$. If $\|\cdot\|$ is any norm on $\mathbb{R}^d$, then the spectral radius satisfies the well-known identity $\rho(A)=\lim_{n \to \infty}\|A^n\|^{1/n}$. Given a bounded set $\mathsf{A}$ of real $d \times d$ matrices, we by analogy define the \emph{joint spectral radius} of $\mathsf{A}$ to be the quantity
\[\varrho(\mathsf{A}):=\lim_{n\to\infty} \max\left\{\left\|A_{i_n}\cdots A_{i_1}\right\|^{\frac{1}{n}}\colon A_{i_j}\in\mathsf{A}\right\}.\]
It is not difficult to establish that this limit exists (essentially as a consequence of subadditivity) and that its value is independent of the choice of norm $\|\cdot\|$. The joint spectral radius was introduced by G.-C.~Rota and G.~Strang in 1960 (see \cite{RS}, later reprinted in \cite{Rotacoll}) and is the subject of ongoing research interest, which has dealt with its applications, its computation and approximation, and its intrinsic properties as a mathematical function. For a broad range of references on this topic we direct the reader to \cite{Chang,HMST,Jungers,PJB}.

It is not difficult to show that the joint spectral radius admits the alternative formulation
\[
\varrho(\mathsf{A})=\sup_{(A_i)_{i=1}^\infty \in \mathsf{A}^{\mathbb{N}}}\limsup_{n\to\infty} \left\|A_{n}\cdots A_{1}\right\|^{\frac{1}{n}},
\]
and that when $\mathsf{A}$ is compact there exists a sequence $(A_i)$ of elements of $\mathsf{A}$ such that $\|A_n\cdots A_1\|^{1/n} \to \varrho(\mathsf{A})$. (A proof of this statement may be found in \cite{Jungers}.) In this paper we are concerned with the following general question: given a finite set of matrices $\mathsf{A}$ and a sequence $(A_i)$ in $\mathsf{A}$ such that $\|A_n \cdots A_1\|^{1/n} \to \varrho(\mathsf{A})$, what can we say about the structure of the sequence $(A_i)$?

A question of particular interest is that of when there exist \emph{periodic} sequences of matrices which achieve this maximal rate of growth. In \cite{LW}, J.~Lagarias and Y.~Wang asked whether every finite set $\mathsf{A}$ of $d \times d$ real matrices has the property that $\|A_n \cdots A_1\|^{1/n} \to \varrho(\mathsf{A})$ for some periodic sequence of elements of $\mathsf{A}$, or, equivalently, whether every $\mathsf{A}$ has the property that $\varrho(\mathsf{A})=\rho(A_k\cdots A_1)^{1/k}$ for some finite sequence $A_1,\ldots,A_k \in \mathsf{A}$. We shall say that $\mathsf{A}$ has the \emph{finiteness property} if such a periodic sequence exists. The existence of pairs of $2 \times 2$ matrices which do not satisfy the finiteness property was subsequently established by T.~Bousch and J.~Mairesse \cite{BM}, with additional proofs being given later by V.~Blondel, J.~Theys and A.~Vladimirov \cite{BTV} and V.~Kozyakin \cite{Koz3}. The finiteness property continues to be the subject of research investigation: some sufficient conditions for the finiteness property have been given in  \cite{CGSC,Daiquack,DaiKoz,BJ}, and in a recent preprint N. Guglielmi and V. Protasov have given an algorithm for the rigorous verification of the finiteness property for real matrices \cite{GuPr}.

In \cite{HMST}, together with K.~G.~Hare and J.~Theys the present authors investigated the finiteness property for pairs of matrices of the form $\mathsf{A}_\alpha:=\left\{A_0^{(\alpha)}, A_1^{(\alpha)}\right\}$, where
\begin{equation}
\label{hmstmat}
A_0^{(\alpha)}:=\left(\begin{array}{cc}1 & 1 \\ 0 &1 \end{array}\right),\qquad A_1^{(\alpha)}:=\alpha\left(\begin{array}{cc}1 & 0 \\1 & 1 \end{array}\right)
\end{equation}
and $\alpha \in [0,1]$. It was shown in particular that if $(x_i) \in \{0,1\}^{\mathbb{N}}$ is a sequence such that $\|A_{x_n}^{(\alpha)} \cdots A_{x_1}^{(\alpha)}\|^{1/n} \to \varrho(\mathsf{A}_\alpha)$, then the proportion of terms of $(x_i)$ which are equal to $1$ is well-defined and equal to a value $\mathfrak{r}(\alpha) \in [0,1]$ which depends only on $\alpha$. We further showed that $\mathfrak{r}$ is a continuous function of $\alpha$, and gave an explicit expression for a value $\alpha_*$ such that $\mathfrak{r}(\alpha_*) \notin \mathbb{Q}$, providing a completely explicit example of a pair of matrices which does not have the finiteness property (see formula~(\ref{eq:alphastar}) below).

In this paper we undertake a detailed study of the behaviour of the function $\mathfrak{r}$ for $\alpha$ belonging to the larger domain $[0,\infty)$. We extend the results described above in several directions. Firstly we give an explicit formula for $\mathfrak r^{-1}(\gamma)$ when $\gamma \in (0,1) \cap \mathbb{Q}$, and prove that this preimage is always an interval with nonempty interior. This allows us to construct an infinite family of examples of pairs of $2 \times 2$ matrices where the joint spectral radius may be computed exactly. Since the problem of devising algorithms for the computation of the joint spectral radius is ongoing (for some recent contributions see \cite{AAJPR, Chang,GuPr,Kozb2,PJB})  these examples are potentially of value for the testing of new algorithms.

Secondly, we show that the function $\mathfrak{r}$ takes the form of a Devil's staircase, as was conjectured in \cite{HMST,Theys}. The methods which we use to obtain these first two results are significantly more general than those used in \cite{HMST}, and can also be applied to the families of pairs of matrices studied by other authors in \cite{BM,Koz3}. We show in particular that $\mathfrak{r}$ takes rational values only in the complement of a set of Hausdorff dimension zero. This result was previously noted in a special case in \cite{BM}, though no proof was given.

Finally, we give an explicit formula for  $\mathfrak{r}^{-1}(\gamma)$ when $\gamma \in (0,1) \setminus \mathbb{Q}$, and in the special case of the matrices given by \eqref{hmstmat} we provide some inequalities for use in the rigorous computation of its value. We thus show how to  construct an uncountable family of explicit examples for which the finiteness property is not satisfied. As with our explicit description of pairs of matrices which satisfy the finiteness property, we anticipate that these examples may be of value in future in the analysis of algorithms for computing the joint spectral radius.

\section{Notation and statement of results}

Throughout this paper we will consider pairs of real $2 \times 2$ matrices which we denote by $A_0$, $A_1$. To describe the structure of sequences of these matrices we use the space of symbolic sequences $\Sigma_2:=\{0,1\}^{\mathbb{N}}$. We refer to the elements of $\Sigma_2$ as \emph{infinite words}. We equip $\Sigma_2$ with the infinite product topology, with respect to which it is compact and metrisable. On some occasions it will be useful to employ a metric on $\Sigma_2$: to this end, given sequences $(x_i), (y_i) \in \Sigma_2$ we define
\[d[(x_i),(y_i)]:=2^{-\max\{i \colon x_i=y_i \}},\]
where the expression $2^{-\infty}$ is interpreted to mean $0$. This defines an ultrametric on $\Sigma_2$ which generates the infinite product topology. We also define the \emph{shift transformation} $T \colon \Sigma_2 \to \Sigma_2$ by $T[(x_i)]:=(x_{i+1})$, which is a continuous surjection. If a pair of matrices $\mathsf{B}:=\{B_0,B_1\}$ is understood, then following the terminology of \cite{HMST,SAEOJSR} we shall say that a sequence $x=(x_i) \in \Sigma_2$ is \emph{weakly extremal} for $\mathsf{B}$ if $\|B_{x_n}\cdots B_{x_1}\|^{1/n} \to \varrho(\mathsf{B})$ in the limit as $n \to \infty$.

 In addition to considering infinite sequences in $\{0,1\}$ we shall also find it useful to consider finite sequences, which we refer to as \emph{finite words}. If $u = (u_i)_{i=1}^n$ is a finite word we call $n$ the \emph{length} of $u$ and define $|u|:=n$. To simplify certain statements we allow the word of length zero, which we refer to as the \emph{empty word}.

With the pair of matrices $\mathsf{A}=\{A_0,A_1\}$ fixed, we define $A_0^{(\alpha)}:=A_0$ and $A_1^{(\alpha)}:=\alpha A_1$ for all real numbers $\alpha \geq 0$, and let $\mathsf{A}_\alpha:=\{A_0^{(\alpha)},A_1^{(\alpha)}\}$. We shall denote the quantity $\varrho(\mathsf{A}_\alpha)$ simply by $\varrho(\alpha)$. The function $\varrho \colon [0,\infty) \to \mathbb{R}$ is continuous (see for example \cite{HS}). For every $x \in \Sigma_2$, $n \geq 1$ and $\alpha \geq 0$ we define $\mathcal{A}_\alpha(x,n):=A_{x_n}^{(\alpha)} \cdots A_{x_1}^{(\alpha)}$ and $\mathcal{A}(x,n):=\mathcal{A}_1(x,n)=A_{x_n}\cdots A_{x_1}$. If $u$ is a finite word of length $m \geq 1$, we similarly define $\mathcal{A}_\alpha(u)=A^{(\alpha)}_{u_m} \cdots A^{(\alpha)}_{u_1}$ and $\mathcal{A}(u)=\mathcal{A}_1(u)$.

In this paper we are concerned specifically with pairs of matrices such that the maximum growth rate of partial products occurs along \emph{Sturmian} sequences. A large range of definitions of Sturmian sequence exist in the literature, see for example \cite{MH} and the surveys in \cite{PF,Lot}. The definition which we give in this section is not the most straightforward to state, but is the most suited to the proof methods which are used later in this article. In order to state this definition and describe its main consequences, we require some further terminology.

Given a finite word $u$, let $|u|_1$ denote the number of entries of $u$ which are equal to $1$, and if $u$ is not the empty word, define the \emph{slope} of $u$ to be the quantity $\varsigma(u):=|u|_1/|u|$. If $u=(u_i)_{i=1}^n$ and $v=(v_i)_{i=1}^m$ are finite words then we define the \emph{concatenation} of $u$ with $v$, denoted by $uv$, to be the finite word $\omega=(\omega_i)_{i=1}^{n+m}$ such that $\omega_i=u_i$ for $1 \leq i \leq n$ and $\omega_i = v_{i-n}$ for $n < i \leq n+m$. We use the symbols $0$ and $1$ to denote the words of unit length with entries $0$ and $1$ respectively. For positive integers $k$ we use the notation $u^k$ to denote the successive concatenation of $k$ copies of the word $u$, and we define $u^0$ to be the empty word. The word $u^k$ will be referred to as the $k$th power of $u$. Using these notational conventions it is clear that any finite word may be written in the form $1^{a_k}0^{a_{k-1}} \cdots 1^{a_1}$ for some finite collection of non-negative integers $a_i$. Given a finite word $u$ of nonzero length $n$, we use the symbol $u^\infty$ to denote the unique infinite word $x=(x_i)_{i=1}^\infty$ such that $x_{i+kn}=u_i$ for all $k \geq 0$ and $1 \leq i \leq n$.

We say that the finite word $u$ is a \emph{subword} of the finite word $v$ if $v=aub$ for some (possibly empty) finite words $a$ and $b$. If $a$ is empty then we say that $u$ \emph{prefixes} $v$. We shall also say that a finite word $u$ prefixes an infinite word $x \in \Sigma_2$ if $u_i=x_i$ for all $i$ in the range $1 \leq i \leq |u|$. A word $u$ will be called \emph{balanced} if for every pair of subwords $v_1$, $v_2$ of $u$ with $|v_1|=|v_2|$ we have $||v_1|_1-|v_2|_1| \leq 1$. Clearly $u$ is balanced if and only if every subword of $u$ is balanced. We say that $x \in \Sigma_2$ is balanced if every prefix of $x$ is balanced. We say that two finite words $u=(u_i)$, $v=(v_i)$ are \emph{cyclically equivalent} if they are equivalent by some cyclic permutation: that is, they share same length $n$ and there exists an integer $k$ such that $u_i=v_{i+k}$ for $1 \leq i \leq n-k$ and $u_i=v_{i+k-n}$ for $n-k < i \leq n$. It is not difficult to see that $u$ and $v$ are cyclically equivalent if and only if there exist (possibly empty) finite words $a$ and $b$ such that $u=ab$ and $v=ba$. We say that $u$ is \emph{cyclically balanced} if it is balanced and all of its cyclic permutations are also balanced. One may show that a nonempty finite word $u$ is cyclically balanced if and only if $u^\infty$ is balanced (see e.g. \cite[Lemma 4.7]{HMST}).

An infinite word $x \in \Sigma_2$ will be called \emph{Sturmian} if it is balanced and recurrent with respect to $T$. It follows that if $u$ is a finite nonempty word, then $u^\infty$ is Sturmian if and only if $u$ is cyclically balanced. The key properties of Sturmian sequences are outlined by the following theorem, the proof of which may be found in \cite{Lot,MH}.
\begin{theorem}\label{Sturm}
For each $\gamma \in [0,1]$ define a set $X_\gamma \subset \Sigma_2$ as follows: we have $x \in X_\gamma$ if and only if there exists $\delta \in \mathbb{R}$ such that either
\[
x_n \equiv \lfloor \gamma(n+1)+\delta\rfloor - \lfloor \gamma n + \delta\rfloor
\]
or
\[
x_n \equiv \lceil \gamma(n+1)+\delta\rceil - \lceil \gamma n + \delta\rceil,
\]
where $\lfloor x\rfloor = \max\{n\in\mathbb Z : n\le x\}$ and $\lceil x\rceil = \min\{n\in\mathbb Z : n\ge x\}$. Then an infinite word $x \in \Sigma_2$ is Sturmian if and only if $x \in \bigcup_{\gamma \in [0,1]}X_\gamma$. The sets $X_\gamma$ have the following properties:
\begin{enumerate}
\item
Each $X_\gamma$ is compact and satisfies $TX_\gamma=X_\gamma$.
\item
The restriction of $T$ to $X_\gamma$ is uniquely ergodic, i.e., $T$ has a unique invariant measure.
\item
If $x \in X_\gamma$ then $n^{-1}\#\{1 \leq i \leq n \colon x_i=1\} \to \gamma$ as $n \to \infty$.
\item
If $\gamma=p/q$ in least terms then the cardinality of $X_\gamma$ is equal to $q$. If $\gamma$ is irrational then $X_\gamma$ is uncountable.
\end{enumerate}
\end{theorem}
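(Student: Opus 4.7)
The plan is to reinterpret the arithmetic formulas defining $X_\gamma$ as the symbolic codings of orbits of the circle rotation $R_\gamma \colon \tau \mapsto \tau + \gamma \pmod 1$ on $\mathbb{R}/\mathbb{Z}$, with respect to a partition into arcs of lengths $1-\gamma$ and $\gamma$, and then to reduce every assertion to a corresponding property of $R_\gamma$. A direct calculation shows that $\lfloor \gamma(n+1)+\delta \rfloor - \lfloor \gamma n + \delta \rfloor$ equals the indicator of the event $\{\gamma n + \delta\} \in [1-\gamma,1)$, where $\{\cdot\}$ denotes fractional part; the ceiling version corresponds to the complementary half-open convention $(1-\gamma, 1]$, so the two formulas together cover the floor/ceiling ambiguity at the partition boundary. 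Thus $X_\gamma$ is the $T$-orbit closure of any mechanical sequence of slope $\gamma$.

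Next I would establish the equivalence between membership in $\bigcup_\gamma X_\gamma$ and being balanced and recurrent. The forward direction is short: for a rotation coding, the $1$-count in any length-$k$ window equals the number of visits of $R_\gamma$ to the $\gamma$-arc over $k$ iterates from some starting point, and a three-distance-type estimate shows two such counts differ by at most one, proving balance; recurrence is trivial in the rational case and follows from minimality of $R_\gamma$ together with continuity of the coding off the partition boundary when $\gamma$ is irrational. For the converse, one first shows that balance alone forces the frequency $\gamma := \lim_n n^{-1}\#\{1 \le i \le n \colon x_i=1\}$ to exist, since $1$-counts in any two length-$k$ windows differ by at most $1$, pinning down $\gamma$ as $k \to \infty$. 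One then inductively reconstructs an intercept $\delta$ from the gap pattern between consecutive $1$'s in $x$, using recurrence to exclude sequences that are eventually mechanical but differ on a finite prefix. This reconstruction is the main technical obstacle: one must carefully handle the two floor/ceiling choices for orbits meeting the partition boundary, and in the rational case show that the reconstruction collapses intervals of $\delta$'s to the same sequence.

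The four listed properties then follow from the rotation model. Each $X_\gamma$ is a $T$-orbit closure, hence compact and $T$-invariant, giving (i). The reconstruction map $x \mapsto \delta$ is a continuous semiconjugacy (off a countable exceptional set) between $T$ on $X_\gamma$ and $R_\gamma$ on $\mathbb{R}/\mathbb{Z}$; since $R_\gamma$ has a unique invariant probability measure (Haar measure for irrational $\gamma$ via Weyl equidistribution, normalised counting measure on its finite orbit for rational $\gamma$), pulling back yields a unique $T$-invariant measure on $X_\gamma$, which is (ii). Property (iii) is Birkhoff's ergodic theorem applied to the indicator of the $\gamma$-arc under this measure. For (iv), when $\gamma = p/q$ in lowest terms the orbit of the partition boundary cuts $\mathbb{R}/\mathbb{Z}$ into $q$ open arcs on each of which the coding is constant, giving exactly $q$ distinct elements of $X_\gamma$ once the boundary ambiguities are absorbed by the two conventions; for irrational $\gamma$ the reconstruction is injective up to countably many boundary ambiguities, so $X_\gamma$ is in bijection with a cocountable subset of $\mathbb{R}/\mathbb{Z}$ and hence uncountable.
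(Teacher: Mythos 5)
The paper offers no proof of this theorem: it is quoted as a standard result, with the proof explicitly deferred to the references \cite{Lot,MH}, and your rotation-coding sketch (mechanical words as codings of the circle rotation $R_\gamma$, balance plus recurrence characterising them via the Morse--Hedlund reconstruction of slope and intercept, and properties (i)--(iv) read off the rotation model) is exactly the classical argument contained in those sources, so your approach coincides with the one the paper relies on. One small simplification worth noting: property (iii) needs no ergodic theorem, since for $x \in X_\gamma$ the count $\#\{1 \leq i \leq n \colon x_i = 1\}$ telescopes to $\lfloor \gamma(n+1)+\delta\rfloor - \lfloor \gamma + \delta\rfloor = \gamma n + O(1)$, whereas citing Birkhoff only yields almost-everywhere convergence unless you route it through unique ergodicity and deal with the discontinuity of the arc indicator.
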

Whilst our primary objective is to continue the study of the pair of matrices defined by \eqref{hmstmat} which were examined in \cite{BTV,HMST,Theys}, the methods which we use are general enough to encompass a larger family. The following definition describes the minimum properties necessary for our arguments to apply:

\begin{definition}\label{leppard}
Let $\mathsf{A}=\{A_0,A_1\}$ be a pair of $2 \times 2$ real matrices. We shall say that $\mathsf{A}$ satisfies the \emph{technical hypotheses} if the following properties hold:
\begin{enumerate}
\item
The matrices $A_0$ and $A_1$ are non-negative, invertible, have positive trace, and do not have a common invariant subspace.
\item
If $u$ is a finite word which is not of the form $1^n$ or $0^n$ then all of the entries of the matrix $\mathcal{A}(u)$ are positive.
\end{enumerate}
We shall further say that $\mathsf{A}$ satisfies the \emph{Sturmian hypothesis} if there exists a function $\mathfrak{r} \colon [0,\infty) \to [0,1]$ such that the following properties hold:
\begin{enumerate}
\setcounter{enumi}{2}
\item
For each $\alpha\geq 0$, every $x \in X_{\mathfrak{r}(\alpha)}$ is weakly extremal for $\mathsf{A}_\alpha$.
\item
For each $\alpha \geq 0$, if $x \in \Sigma_2$ is weakly extremal for $\mathsf{A}_\alpha$ then $n^{-1}\#\{1 \leq i \leq n \colon x_i=1\} \to \mathfrak{r}(\alpha)$.
\item
If $u$ is a finite word which is not cyclically balanced then $\rho(\mathcal{A}_\alpha(u))<\varrho(\alpha)^{|u|}$.
\end{enumerate}
The function $\mathfrak{r}$ will be called the \emph{$1$-ratio function} of the pair $\mathsf{A}$.
\end{definition}

Note that as a consequence of (iv), if $\mathfrak{r}$ exists then it is unique. By \cite[Theorem~2.3]{SAEOJSR} and the minimality of the invariant sets $X_\gamma$, the hypothesis (iii) is in fact equivalent to the hypothesis that $X_{\mathfrak{r}(\alpha)}$ contains at least one extremal infinite word. Some conditions equivalent to (iv) have been used by other authors: a description of these conditions and a proof of their equivalence are given in \cite[\S6]{SAEOJSR}.

A range of examples of pairs $\mathsf{A}$ which satisfy the Sturmian hypothesis are known. In \cite{HMST}, the authors together with K.~G.~Hare and J.~Theys proved that the family of matrices given by \eqref{hmstmat} satisfies parts (iii)-(v) of the Sturmian hypothesis for $\alpha$ restricted to the interval $[0,1]$. If we extend the definition of $\mathfrak{r}$ to the interval $[0,\infty)$ by defining $\mathfrak{r}(\alpha)=1-\mathfrak{r}(1/\alpha)$ for each $\alpha \in (1,\infty)$, then by taking advantage of the relation $A_0=A_1^T$ it is not difficult to show that the Sturmian hypothesis in full for the family $\mathsf{A}_\alpha$. The essential points of this argument are contained in Lemma \ref{switch} below.

In the earlier work \cite{BM}, T.~Bousch and J.~Mairesse also proved that the Sturmian hypothesis holds for the matrices
\begin{equation}\label{eq:bm}
A_0:=\begin{pmatrix} e^{\kappa h_0}+1 & 0 \\ e^\kappa & 1 \end{pmatrix}, \ \
A_1 := \begin{pmatrix} 1 & e^\kappa \\ 0 & e^{\kappa h_1}+1
\end{pmatrix},
\end{equation}
subject to the inequalities $\kappa,h_0, h_1>0$ and $h_0+h_1<2$. Clearly the examples given by \eqref{hmstmat} and \eqref{eq:bm} also satisfy the technical hypotheses.  In a series of papers, V.~S.~Kozyakin has shown that the Sturmian hypothesis holds for pairs of triangular matrices having the form
\[A_0:=\begin{pmatrix} a & b \\ 0 & 1 \end{pmatrix}, \ \
A_1 := \begin{pmatrix} 1 & 0 \\ c & d\end{pmatrix}
\]
where $0<a,d<1 \leq bc$; an overview of this work is given in \cite{Koz3}. Note that the examples considered by Kozyakin satisfy the technical hypotheses in the case $b,c>0$, and are simultaneously similar to a pair of matrices satisfying the technical hypotheses when $b,c<0$.

In order to state the explicit formula for the intervals $\mathfrak{r}^{-1}(p/q)$ which forms part of our first theorem we require one last definition, namely that of a \emph{standard pair}. The set of all standard pairs, which we denote by $\mathcal{P}$, is defined to be the smallest nonempty set of ordered pairs of finite words which satisfies the following two properties: $(0,1) \in \mathcal{P}$, and if $(u,v)\in\mathcal{P}$ then $(uv,v) \in \mathcal{P}$ and $(u,vu)\in\mathcal{P}$. We say that $\omega$ is a \emph{standard word} if it is one half of a standard pair. Every standard word is balanced (see, e.g., \cite[Proposition~2.2.15]{Lot}). If $(u,v)$ is a standard pair then $(u,vu^n)$ and $(uv^n,v)$ are also standard pairs for every $n \geq 0$, and it follows that every power of a standard word is a subword of some standard word. In particular every power of a standard word is balanced, and consequently every standard word is cyclically balanced. A highly detailed analysis of the properties of the set $\mathcal{P}$ may be found in \cite{Lot}.

The main result of the present paper is the following theorem:

\begin{theorem}\label{main}
Let $A_0,A_1$ be a pair of $2 \times 2$ real matrices which satisfies both the technical hypotheses and the Sturmian hypothesis. Then:
\begin{enumerate}
\item
The function $\mathfrak{r}$ is continuous and monotone non-decreasing, and satisfies $\mathfrak{r}(0)=0$ and $\lim_{\alpha \to \infty}\mathfrak{r}(\alpha)=1$.
\item
For every rational number $\gamma \in (0,1)$, the set $\mathfrak{r}^{-1}(\gamma)$ is a closed interval with nonempty interior. The interval $\mathfrak{r}^{-1}(0)\cap (0,\infty)$ is nonempty if and only if $A_0$ is diagonalisable, and similarly $\mathfrak{r}^{-1}(1)$ is nonempty if and only if  $A_1$ is diagonalisable.

The intervals $\mathfrak{r}^{-1}(p/q)$ may be computed exactly by the following procedure. If $A_0$ is diagonalisable, let $P_0:=\lim_{n \to \infty}\rho(A)^{-n}A_0^n$. Then
\[
\mathfrak{r}^{-1}(0)= \left[0,\frac{\rho(A_0)}{\rho(P_0A_1)}\right].
\]
Similarly if $A_1$ is diagonalisable and $P_1:=\lim_{n \to \infty}\rho(A_1)^{-n}A_1^n$, then
\[
\mathfrak{r}^{-1}(1)= \left[\frac{\rho(P_1A_0)}{\rho(A_1)},+\infty\right).
\]
If $p/q \in (0,1)$ in least terms, then there exists a standard pair $(u,v)$ such that $\varsigma(uv)=p/q$. Define $|u|:=q_1$ and $|v|:=q_2$, let $B_1:=\mathcal{A}(u)$, $B_2:=\mathcal{A}(v)$ and $A:=B_1B_2$, and let $P:=\lim_{n \to \infty}\rho(A)^{-n}A^n$ be the Perron projection associated to the positive matrix $A$. Then we have
\[\mathfrak{r}^{-1}(p/q)=\left[\frac{\rho(B_1P)^q} {\rho(A)^{q_1}},\frac{\rho(A)^{q_2}}{\rho(PB_2)^q}\right]\]
and $\varrho(\alpha)=\rho(\mathcal{A}_\alpha(uv))^{1/q}$ for all $\alpha \in \mathfrak{r}^{-1}(p/q)$.
\item
The Hausdorff dimension of the set $\mathfrak{r}^{-1}([0,1)\setminus \mathbb{Q})$ is zero. In particular, $\mathfrak r^{-1}(\gamma)$ is a singleton for any irrational $\gamma\in(0,1)$.
\end{enumerate}
\end{theorem}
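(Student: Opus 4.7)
The plan for monotonicity is a direct multiplicative comparison. Any weakly extremal sequence $x$ for $\mathsf A_\alpha$ satisfies $\mathcal A_\alpha(x,n)=\alpha^{k_n(x)}\mathcal A(x,n)$, where $k_n(x)$ counts the $1$'s among $x_1,\dots,x_n$, and by (iv) $k_n(x)/n\to\mathfrak r(\alpha)$. Testing the same $x$ at a parameter $\beta>\alpha$ gives $\varrho(\beta)\ge(\beta/\alpha)^{\mathfrak r(\alpha)}\varrho(\alpha)$; the symmetric comparison using a weakly extremal sequence at $\beta$ gives $\varrho(\alpha)\ge(\alpha/\beta)^{\mathfrak r(\beta)}\varrho(\beta)$, and combining forces $\mathfrak r(\alpha)\le\mathfrak r(\beta)$. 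For continuity I would take $\alpha_n\to\alpha$, select weakly extremal $x^{(n)}\in X_{\mathfrak r(\alpha_n)}$ using (iii), and invoke compactness of $\Sigma_2$: any accumulation point is weakly extremal for $\mathsf A_\alpha$ (by continuity of $\varrho$) and lies in $X_{\lim\mathfrak r(\alpha_n)}$ (by upper semicontinuity of $\gamma\mapsto X_\gamma$, which is easy to extract from the explicit formulas in Theorem \ref{Sturm}), so the uniqueness part of (iv) forces $\lim\mathfrak r(\alpha_n)=\mathfrak r(\alpha)$. The endpoint $\mathfrak r(0)=0$ is immediate from $A_1^{(0)}=0$, and $\lim_{\alpha\to\infty}\mathfrak r(\alpha)=1$ follows from the two-sided estimate $\alpha\rho(A_1)\le\varrho(\alpha)\le\alpha^{\mathfrak r(\alpha)}\varrho(1)$.

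\textbf{Part (ii): rational plateaus.} For $p/q\in(0,1)$ in least terms, standard pair theory produces $(u,v)\in\mathcal P$ with $\varsigma(uv)=p/q$; since $uv$ is cyclically balanced, $(uv)^\infty\in X_{p/q}$ is Sturmian. Whenever $\mathfrak r(\alpha)=p/q$, hypothesis (iii) forces $\varrho(\alpha)=\rho(\mathcal A_\alpha(uv))^{1/q}=\alpha^{p/q}\rho(A)^{1/q}$, and part (i) makes $\mathfrak r^{-1}(p/q)$ a closed interval. To locate its right endpoint I would examine the family of Stern-Brocot descendants above $p/q$: the cyclic standard words for slopes $(np+p_2)/(nq+q_2)\searrow p/q$ are cyclic rearrangements of $(uv)^nv$, so their matrix products are spectrally equivalent to $A^nB_2$. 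Using primitivity of $A$ to write $A^n=\rho(A)^n(P+\mathrm o(1))$ and the standard-pair determinant identity $p_2q_1-p_1q_2=1$, the endpoints of the plateau at slope $(np+p_2)/(nq+q_2)$ may be computed and shown to converge, as $n\to\infty$, to the single value $\rho(A)^{q_2}/\rho(PB_2)^q$, which is therefore the right endpoint of $\mathfrak r^{-1}(p/q)$. The left endpoint follows symmetrically using $u(uv)^n$ and $B_1P$. The extreme cases $\gamma=0$ or $1$ use the Perron projections $P_i=\lim\rho(A_i)^{-n}A_i^n$, which exist precisely when $A_i$ is diagonalisable; if $A_0$ is not diagonalisable then $A_0^n$ grows like $n\rho(A_0)^n$, and a direct comparison shows every positive $\alpha$ makes some word containing a $1$ strictly extremal, forcing $\mathfrak r^{-1}(0)=\{0\}$.

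\textbf{Part (iii): Hausdorff dimension.} My plan is to use the explicit endpoint formulas of part (ii) together with the Stern-Brocot enumeration of rationals in $(0,1)$. The set $\mathfrak r^{-1}([0,1)\setminus\mathbb Q)$ is exactly the complement of the union of rational plateaus, and at generation $n$ in the Stern-Brocot tree it is covered by at most $2^n$ complementary intervals of common length bound $\ell_n$. The main obstacle and technical heart of (iii) is a super-geometric decay estimate on $\ell_n$, fast enough that $2^n\ell_n^s\to 0$ for every $s>0$. This should follow from the Birkhoff-Hopf contraction of the Hilbert projective metric under multiplication by a primitive non-negative matrix: each letter added to a standard word contributes a uniform geometric contraction of the positive cone, so, since the standard-word lengths grow at least at a Fibonacci rate along any branch, the aggregated contraction at depth $n$ is doubly exponential in $n$, which via the endpoint formulas of (ii) translates to the required super-geometric bound on $\ell_n$. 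Once this is established, the Hausdorff $s$-content of $\mathfrak r^{-1}([0,1)\setminus\mathbb Q)$ vanishes for every $s>0$, yielding Hausdorff dimension zero. The singleton property for irrational $\gamma$ is then immediate, since a non-degenerate preimage interval would be a one-dimensional subset of a dimension-zero set.
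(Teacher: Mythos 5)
Your part (i) arguments for monotonicity and for the limits at $0$ and $\infty$ (the multiplicative comparison $\varrho(\beta)\ge(\beta/\alpha)^{\mathfrak r(\alpha)}\varrho(\alpha)$ and the squeeze $\alpha\rho(A_1)\le\varrho(\alpha)\le\alpha^{\mathfrak r(\alpha)}\varrho(1)$) are correct and pleasantly direct, but the rest of the proposal has genuine gaps. In (i), weak extremality does not pass to accumulation points in the product topology (it is a tail property, and a limit only fixes finite prefixes), and $\gamma\mapsto X_\gamma$ is not upper semicontinuous (e.g. $10^\infty$ is a limit of points of $X_{1/n}$ but lies in no $X_\gamma$), so your compactness proof of continuity does not work as stated; the paper instead invokes Theorem~\ref{gen}. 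In (ii) the more serious omission is the \emph{nonempty interior}: your squeeze along the slopes $(np+p_i)/(nq+q_i)$ can indeed be made non-circular (compare, at a point of $\mathfrak r^{-1}(\gamma_n)$, the extremal word of slope $\gamma_n$ with the competitor $(uv)^\infty$, then let $n\to\infty$ using $A^n=\rho(A)^n(P+o(1))$ and monotonicity/continuity), and this reproduces the endpoint formulas, but nothing in your argument shows the two endpoints are distinct, i.e. that $\rho(B_1P)^q\rho(PB_2)^q<\rho(A)^q$. That strict inequality is the substance of the claim and occupies most of \S\ref{ratchap}: the paper assumes equality, deduces that $B_1$ and $B_2$ have a common eigenvector, and contradicts $\rho(B_1^2B_2^2)<\rho(B_1B_2)^2$, which comes from the combinatorial fact that $u^2v^2$ is not cyclically balanced (Proposition~\ref{sw2}) together with hypothesis (v). Your proposal contains no substitute for this step.

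Part (iii) is where the proposed route would actually fail. First, the contraction heuristic rests on two false premises: $A_0$ and $A_1$ individually are not positive matrices, so adding a single letter gives \emph{no} uniform Hilbert-metric contraction (powers of $A_0$ alone contract only polynomially), and denominators of standard words do \emph{not} grow at a Fibonacci rate along every Stern--Brocot branch --- along a branch with a large partial quotient (e.g. toward $[2,m]$, which stays inside a compact $L\subset(0,1)$) they grow only linearly in the depth. Second, even granting the true bound (gap length exponentially small in the denominator $q$, cf. Lemma~\ref{fourpointsix} and Corollary~\ref{rationalbound}), your covering is quantitatively insufficient: at generation $n$ you have $\sim 2^n$ complementary intervals while the smallest denominator present is only $\sim n$, so $2^n\ell_n^{s}$ does not tend to $0$ for small $s$ and you obtain at best a positive dimension bound, not zero. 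Third, the gap between adjacent plateaus is not a consequence of "the endpoint formulas of (ii)" for the two bounding rationals: one must compare the subgradient at $p_n/q_n$ with the chord slope to $p_{n-1}/q_{n-1}$ with an error of size $O(q_n\theta^{q_n})$, which the paper achieves with extremal norms, the invariant section of Proposition~\ref{invtgraph} over $X_\gamma$, and the eigenvalue-perturbation Lemma~\ref{elenmmna}; and the count-versus-size problem is resolved by covering with the continued-fraction cylinders $\mathcal I_{(a_1,\ldots,a_n)}$ (essentially one cover element per rational, so the $\lambda$-sum is dominated by $\sum_q q\,(Kq\theta^q)^\lambda<\infty$ for every $\lambda>0$), with the irrationals whose expansion is eventually $1$ handled separately as a countable set via Lemma~\ref{irrat}, and with the restriction to compact $L\subset(0,1)$ (plus countable stability of dimension) to keep the constants uniform. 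None of this machinery, which is the technical heart of Theorem~\ref{main}(iii), is supplied or replaced by your sketch.
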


Some examples of the explicit formulae generated by Theorem~\ref{main}(ii) are given in Table~\ref{tablemain}. Note that a direct consequence of Theorem~\ref{main}(ii) is that when all of the entries of $A_0$ and $A_1$ are rational, the endpoints of $\mathfrak{r}^{-1}(p/q)$ are algebraic numbers of degree either $1$ or $2$. In the latter case both endpoints belong to the same quadratic field.

As a direct consequence of (iii) we obtain the following result:

\begin{corollary}The function $\mathfrak r$ is not H\"older continuous.
\end{corollary}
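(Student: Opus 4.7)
The plan is to deduce non-H\"older continuity directly from part (iii) of Theorem~\ref{main} via the standard fact that H\"older maps cannot decrease Hausdorff dimension by more than a controlled factor. Specifically, if $f \colon \mathbb{R} \to \mathbb{R}$ is H\"older continuous with exponent $\beta \in (0,1]$, then for every subset $E$ of its domain one has $\dim_H f(E) \leq \beta^{-1}\dim_H E$; in particular a H\"older map sends dimension-zero sets to dimension-zero sets.

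I would argue by contradiction: assume that $\mathfrak{r}$ is H\"older continuous with some exponent $\beta > 0$, and set $S := \mathfrak{r}^{-1}([0,1) \setminus \mathbb{Q})$. By Theorem~\ref{main}(iii), $\dim_H S = 0$, so the H\"older bound above gives $\dim_H \mathfrak{r}(S) = 0$. On the other hand, by Theorem~\ref{main}(i) the function $\mathfrak{r}$ is continuous on $[0,\infty)$ with $\mathfrak{r}(0) = 0$ and $\lim_{\alpha \to \infty}\mathfrak{r}(\alpha) = 1$, so the intermediate value theorem shows that every value in $[0,1)$ lies in the image of $\mathfrak{r}$. In particular every irrational $\gamma \in (0,1)$ is attained, so $(0,1)\setminus\mathbb{Q} \subseteq \mathfrak{r}(S)$, whence $\dim_H \mathfrak{r}(S) \geq \dim_H((0,1)\setminus\mathbb{Q}) = 1$, a contradiction.

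There is no real obstacle here; the only item that might warrant an explicit reference is the dimension bound for H\"older maps, which is a standard fact (see, e.g., Falconer's book on fractal geometry). The corollary is then a one-line consequence of Theorem~\ref{main}(i) and (iii).
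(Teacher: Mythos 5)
Your argument is correct and is essentially the argument the paper intends: the corollary is stated there as a direct consequence of Theorem~\ref{main}(iii), the implicit reasoning being exactly that a H\"older map cannot raise Hausdorff dimension by more than the factor $\beta^{-1}$, yet $\mathfrak{r}$ sends the zero-dimensional set $\mathfrak{r}^{-1}\left((0,1)\setminus\mathbb{Q}\right)$ onto the full set of irrationals in $(0,1)$, which has dimension one. The only cosmetic remark is that your symbol $S$ for the preimage clashes with the paper's concave function $S$, so it should be renamed if written up.
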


To illustrate the behaviour of the function $\mathfrak r$ we reproduce a diagram from \cite{HMST}: see Figure~\ref{fig:frakr(gamma)} below.
\begin{figure}[H]
\[ \includegraphics[width=250pt,height=250pt,angle=270]{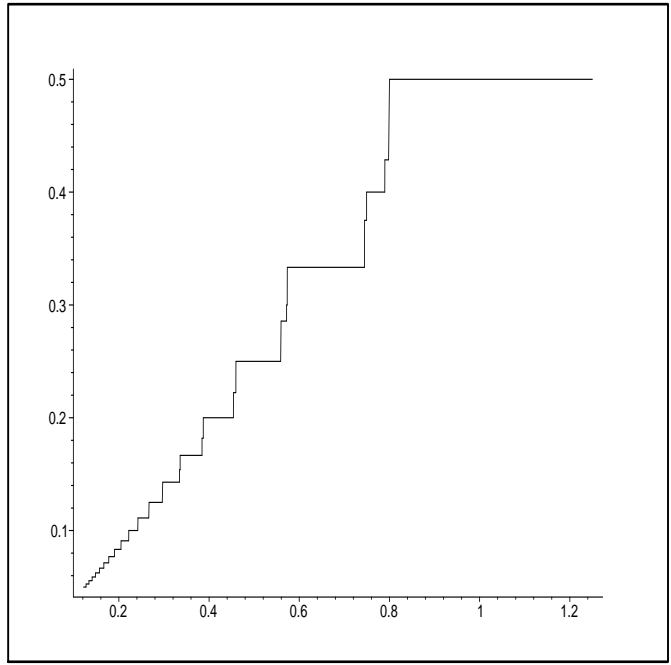} \]
\caption{This figure shows the graph of $\mathfrak{r}$ for $\alpha$ restricted to the interval $[0,5/4]$ for the family of matrices given by \eqref{hmstmat}. Using the explicit formula for $\mathfrak{r}^{-1}(1/n)$ given in Table~\ref{tablemain} one may show that in this case $\alpha^{-1}\mathfrak{r}(\alpha) \to 1/e$ in the limit as $\alpha \to 0$.}
\label{fig:frakr(gamma)}
\end{figure}

\begin{table}\begin{tabular}{|c|c|c|}
\hline
1-ratio $\gamma$ & Standard pair & Interval $\mathfrak{r}^{-1}(\gamma)$\\\hline
$1/2$ & $(0,1)$ & $\left[\frac{4}{5},\frac{5}{4}\right]$\\ \hline
$3/7$ & $(00101,01)$ & $\left[\frac{(5+\frac{127}{168}\sqrt{42})^7} {(13+2\sqrt{42})^5},\frac{(13+2\sqrt{42})^2} {(\frac{3}{2}+\frac{29}{168}\sqrt{42})^7}\right]$ \\ \hline
$2/5$ & $(001,01)$ & $\left[\frac{(2+\frac{17}{24}\sqrt{6})^5} {(5+2\sqrt{6})^3},\frac{(5+2\sqrt{6})^2} {(\frac{3}{2}+\frac{11}{24}\sqrt{6})^5}\right]$\\ \hline
$1/3$ & $(0,01)$ & $\left[\frac{69 - 16\sqrt{3}}{72},\frac{1656-384\sqrt{3}}{1331}\right]$\\ \hline
$2/7$ & $(0001,001)$ & $\left[\frac{(\frac{5}{2}+\frac{41}{40}\sqrt{5})^7} {(9+4\sqrt{5})^4},\frac{(9+4\sqrt{5})^3} {(2+\frac{31}{40}\sqrt{5})^7}\right]$ \\ \hline
$1/4$ & $(0,001)$ & $\left[\frac{496-64\sqrt{21}}{441}, \frac{13671-1764\sqrt{21}}{10000}\right]$\\ \hline
$1/5$ & $(0,0001)$ & $\left[\frac{10612 - 5261\sqrt{2}}{8192},\frac{43466752 - 21549056\sqrt{2}}{28629151}\right]$\\ \hline
$1/6$ & $(0,00001)$ & $\left[\frac{(1+\frac{1}{3\sqrt{5}})^6} {\frac{7}{2}+\frac{3}{2}\sqrt{5}},\frac{(\frac{7}{2}+ \frac{3}{2}\sqrt{5})^{5}}{(3+\frac{19}{15}\sqrt{5})^6}\right]$\\ \hline
$1/7$ & $(0,000001)$ & $\left[\frac{(1+\frac{1}{2\sqrt{15}})^7} {4+\sqrt{15}},\frac{(4+\sqrt{15})^6}{(\frac{7}{2}+ \frac{13}{\sqrt{15}})^7}\right]$\\ \hline
$\frac{1}{n+1}$ & $(0,0^{n-1}1)$ & $ \left[\frac{\left(1+\frac{1}{\sqrt{n^2+4n}}\right)^{n+1}} {1+\frac{n}{2}+\frac{1}{2}\sqrt{n^2+4n}}, \frac{\left(1+\frac{n}{2}+\frac{1}{2}\sqrt{n^2+4n}\right)^n} {\left(\frac{n+1}{2}+ \frac{n^2+3n-2}{2n^2+8n}\sqrt{n^2+4n}\right)^{n+1}}\right]$\\ \hline
\end{tabular}\bigskip
\caption{This table gives some examples of explicit formulae for the intervals $\mathfrak{r}^{-1}(\gamma)$ for the family of pairs of matrices given by \eqref{hmstmat}.  Note that the endpoints of $\mathfrak{r}^{-1}(1/n)$ are asymptotically equal to $e/n + o(1/n)$ in the limit as $n \to \infty$, a feature which may be observed in Figure~\ref{fig:frakr(gamma)}.}
\label{tablemain}
\end{table}

In the cases studied in \cite{BM, HMST, Koz3} the continuity of the function $\mathfrak{r}$ is established by using the particular characteristics of the matrices $A_0,A_1$ in quite a strong fashion. In proving part (i) of Theorem~\ref{main} we observe that the continuity of the function $\mathfrak{r}$ is in fact a corollary of its defining properties. T.~Bousch and J.~Mairesse have asserted in \cite{BM} that part~(iii) of Theorem~\ref{main} holds for the case of triangular matrices of the form~(\ref{eq:bm}), but their proof remains unpublished. In \cite{HMST} we proved for the case of matrices (\ref{hmstmat}) that $\mathfrak r^{-1}(\gamma)$ is a singleton if $\gamma$ is irrational and not Liouville. Theorem~\ref{main}(iii) shows that this remains true for $\gamma$ irrational and Liouville.

In the course of proving part (iii) of Theorem~\ref{main} we are able to establish the following result: if $L\subset (0,1)$ is a compact interval, then there exist constants $K>1$ and $\theta\in (0,1)$ depending on $L$ such that the interval $\mathfrak{r}^{-1}(p/q)$ has diameter less than $K\theta^q$ for every $p/q \in L$, where the fraction $p/q$ is understood to be given in least terms. Heuristically, this result tells us not only that values of $\alpha$ for which $\mathfrak{r}(\alpha)$ is irrational are extremely scarce, but also that values for which $\mathfrak{r}(\alpha)$ is a rational number with large denominator are still relatively scarce, at least when $\alpha$ lies within a given neighbourhood bounded away from zero and infinity. This result is given as Corollary~\ref{rationalbound} below.

The second result of this paper is the following theorem which gives an infinite product formula for $\mathfrak{r}^{-1}(\gamma)$.
\begin{theorem}\label{exact-irrational}
Let $A_0,A_1$ be a pair of matrices which satisfy the technical hypotheses and the Sturmian hypothesis, and let $\gamma \in (0,1) \setminus \mathbb{Q}$. Let $(a_n)_{n=1}^\infty \in \mathbb{N}^{\mathbb{N}}$ be the sequence of continued fraction coefficients of $\gamma$, and for each $n \geq 1$ let $p_n / q_n$ be the corresponding convergent. Define a sequence of finite words $(s_n)$ inductively by setting $s_{-1}:=1$, $s_0:=0$, $s_1:=s_0^{a_1-1}s_{-1}$ and $s_{n+1}:= s_n^{a_{n+1}}s_{n-1}$ for every $n \geq 1$, and for each integer $n \geq -1$ define $\rho_n:=\rho(\mathcal{A}(s_n))$. Then $\mathfrak{r}^{-1}(\gamma)$ is the singleton set whose unique element is given by
\begin{equation}\label{eq:explic}
\alpha_\gamma:=\lim_{n \to \infty}\left(\frac{\rho_n^{q_{n+1}}}{\rho_{n+1}^{q_n}}\right)^{(-1)^n} = \frac{1}{\rho(A_1)}\prod_{n=0}^\infty \left(\frac{\rho_n^{a_{n+1}}\rho_{n-1}}{\rho_{n+1}}\right)^{(-1)^nq_n}.
\end{equation}
\end{theorem}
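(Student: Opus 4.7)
First I would invoke Theorem~\ref{main}(iii) to conclude immediately that $\mathfrak{r}^{-1}(\gamma)=\{\alpha_\gamma\}$ for a unique $\alpha_\gamma$, reducing the problem to identification. Classical continued-fraction theory (see e.g.\ \cite{Lot}) identifies each $s_n$ as a standard word, hence cyclically balanced, with $|s_n|=q_n$ and $|s_n|_1=p_n$. Applying Theorem~\ref{main}(ii) to the slope $p_n/q_n$, the set $\mathfrak{r}^{-1}(p_n/q_n)$ is a nondegenerate closed interval $[\beta_n^-,\beta_n^+]$, and because $s_n^\infty$ is weakly extremal throughout this interval while $\mathcal{A}_\alpha(s_n)=\alpha^{p_n}\mathcal{A}(s_n)$, the formula $\varrho(\alpha)=\alpha^{p_n/q_n}\rho_n^{1/q_n}$ holds there. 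Continuity and monotonicity of $\mathfrak{r}$ from Theorem~\ref{main}(i), together with $p_n/q_n\to \gamma$, force $\beta_n^\pm\to \alpha_\gamma$; any other subsequential limit would be a second preimage of $\gamma$.

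Next I would set $f_n(\alpha):=\alpha^{p_n/q_n}\rho_n^{1/q_n}$. The definition of the joint spectral radius applied to the periodic orbit $s_n^\infty$ gives the universal lower bound $\varrho(\alpha)\geq f_n(\alpha)$ for every $\alpha\geq 0$, with equality on $\mathfrak{r}^{-1}(p_n/q_n)$. The equation $f_n(\alpha)=f_{n+1}(\alpha)$ is of the form $c_1\alpha^a=c_2\alpha^b$ with $a\neq b$, so it has a unique positive solution $\alpha_n^*$; a short calculation using $p_nq_{n+1}-p_{n+1}q_n=(-1)^{n+1}$ simplifies this solution to $\alpha_n^*=(\rho_n^{q_{n+1}}/\rho_{n+1}^{q_n})^{(-1)^n}$. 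On $\mathfrak{r}^{-1}(p_n/q_n)$ one has $f_n=\varrho\geq f_{n+1}$ while on $\mathfrak{r}^{-1}(p_{n+1}/q_{n+1})$ the opposite inequality $f_n\leq \varrho=f_{n+1}$ holds, so by the intermediate value theorem the unique zero $\alpha_n^*$ of $f_n-f_{n+1}$ is squeezed between these two intervals. Combined with $\beta_n^\pm\to \alpha_\gamma$ this sandwich forces $\alpha_n^*\to \alpha_\gamma$, proving the first equality in \eqref{eq:explic}.

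Finally, the infinite product representation follows by telescoping. Taking logarithms of the identity $\alpha_n^*=(\rho_n^{q_{n+1}}/\rho_{n+1}^{q_n})^{(-1)^n}$ and using the recursion $q_{n+1}=a_{n+1}q_n+q_{n-1}$, one routinely derives
\[
\log\alpha_{n+1}^*-\log\alpha_n^*=(-1)^{n+1}q_{n+1}\log\!\left(\frac{\rho_{n+1}^{a_{n+2}}\rho_n}{\rho_{n+2}}\right),
\]
and the base value $\alpha_0^*=\rho_0^{a_1}/\rho_1$ combined with $\rho_{-1}=\rho(A_1)$ absorbs into the $n=0$ factor of the product after one divides by $\rho(A_1)$. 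Summing from $0$ and exponentiating yields the second equality in \eqref{eq:explic}. The only delicate point in the argument is the sandwich step: verifying that the unique zero of $f_n-f_{n+1}$ lies \emph{between} $\mathfrak{r}^{-1}(p_n/q_n)$ and $\mathfrak{r}^{-1}(p_{n+1}/q_{n+1})$ and not at some unrelated location. This is what ultimately requires both the universal inequality $\varrho\geq f_n$ coming from Definition~\ref{leppard}(iii) and the fact that $f_n,f_{n+1}$ are monomials in $\alpha$ so that they can meet at most once in $(0,\infty)$.
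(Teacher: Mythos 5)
Your argument is correct, and its overall skeleton matches the paper's: reduce to identification via Theorem~\ref{main}(iii), evaluate everything at the convergents through the standard words $s_n$ (so that $q_n^{-1}\log\rho_n$ records the growth rate at slope $p_n/q_n$), obtain exactly the quantity $\alpha_n^*=(\rho_n^{q_{n+1}}/\rho_{n+1}^{q_n})^{(-1)^n}$, and telescope with $q_{n+1}=a_{n+1}q_n+q_{n-1}$ to get the product (your telescoping and base-case bookkeeping coincide with the paper's, which starts from $\alpha_{-1}=1/\rho(A_1)$). Where you genuinely diverge is the limit step. The paper works with the concave function $S$ of Proposition~\ref{sfunction}: the singleton preimage means $S$ has a unique subgradient, hence is differentiable at $\gamma$, and then $-\log\alpha_\gamma=S'(\gamma)$ is the limit of the chord slopes of $S$ between consecutive convergents, which is precisely $\lim(-1)^n(q_n\log\rho_{n+1}-q_{n+1}\log\rho_n)$. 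You instead stay at the level of $\varrho$: the crossing point of the monomial lower bounds $f_n,f_{n+1}$ (which is the exponential of minus that same chord slope, so the two arguments are dual to one another) is trapped, by the universal inequality $\varrho\geq f_m$ from Definition~\ref{leppard}(iii) and the equalities $\varrho=f_n$ on $\mathfrak{r}^{-1}(p_n/q_n)$, between consecutive rational preimage intervals, whose endpoints converge to $\alpha_\gamma$ by continuity, monotonicity and uniqueness of the preimage. Your route avoids any explicit convex analysis at the cost of invoking Theorem~\ref{main}(ii) (only nonemptiness of the intervals is really needed) and the IVT/sandwich bookkeeping; the paper's route is slightly shorter once $S$ is available, since differentiability at $\gamma$ instantly yields convergence of the chord slopes. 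One small point you pass over quickly: to conclude $\beta_n^{\pm}\to\alpha_\gamma$ from ``any subsequential limit is a second preimage'' you should note that the endpoints eventually lie in the compact set $\mathfrak{r}^{-1}([\gamma-\epsilon,\gamma+\epsilon])\subset(0,\infty)$ (using $\mathfrak{r}(0)=0$, $\mathfrak{r}(\alpha)\to1$ and monotonicity), so that subsequential limits exist and are neither $0$ nor $\infty$; with that line added the proof is complete.
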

In general it is not clear whether the infinite product given here will always converge unconditionally, although we are able to prove this in special cases. For the family of matrices defined by \eqref{hmstmat} we are able to give a checkable criterion for a rigorous bound on the error in approximating $\alpha_\gamma$ by partial products of the infinite product given above. The details of these estimates are given in \S8.

\section{Convex analysis and continuity of the 1-ratio}

In this section we give the proof of part~(i) of Theorem~\ref{main}, and introduce a concave function $S \colon [0,1] \to \mathbb{R}$ which characterises the rate of growth of $\mathcal{A}$ along Sturmian trajectories. We begin with the following simple lemma:
\begin{lemma}
\label{switch}
Let $\mathsf{A}=\{A_0,A_1\}$ be a pair of matrices which satisfies the Sturmian hypothesis, and let $\mathfrak{r} \colon [0,\infty) \to [0,1]$ be the corresponding 1-ratio function. Define a new pair of matrices $\hat{\mathsf{A}}:=\{\hat{A}_0,\hat{A}_1\}$ by $\hat{A}_0:=A_1$, $\hat{A}_1:=A_0$. Then $\hat{\mathsf{A}}$ also satisfies the Sturmian hypothesis, and if $\hat{\mathfrak{r}}$ denotes the $1$-ratio function of $\hat{\mathsf{A}}$, then $\mathfrak{r}(\alpha)=1-\hat{\mathfrak{r}}(1/\alpha)$ for all $\alpha \in (0,\infty)$.
\end{lemma}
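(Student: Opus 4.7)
The plan is to reduce the Sturmian hypothesis for $\hat{\mathsf{A}}$ directly to that for $\mathsf{A}$ via a single scalar renormalisation combined with the bit-flip symmetry $x \mapsto \bar x$ on $\Sigma_2$ (where $\bar x_i := 1 - x_i$). Concretely, I would first verify the letter-by-letter identity $\hat A_i^{(\beta)} = \beta\, A_{1-i}^{(1/\beta)}$ for $i \in \{0,1\}$ and $\beta > 0$ by direct inspection, which immediately yields
$$\hat{\mathcal{A}}_\beta(u) \;=\; \beta^{|u|}\,\mathcal{A}_{1/\beta}(\bar u)$$
for every finite word $u$, and analogously for partial products of infinite words. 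Passing to norms, to $n$-th roots, and to spectral radii then gives the two key consequences $\hat{\varrho}(\beta) = \beta\,\varrho(1/\beta)$ and the statement that $x \in \Sigma_2$ is weakly extremal for $\hat{\mathsf{A}}_\beta$ if and only if $\bar x$ is weakly extremal for $\mathsf{A}_{1/\beta}$.

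Next I would record the elementary symmetries of the Sturmian data under bit-flipping: one has $\bar X_\gamma = X_{1-\gamma}$ (read off from the floor/ceiling formula in Theorem~\ref{Sturm} by replacing $\gamma$ with $1-\gamma$ and adjusting $\delta$), the balance and cyclic-balance conditions are preserved under $u \mapsto \bar u$ since they are symmetric in $|v|_0$ and $|v|_1$, and the asymptotic frequency of $1$s in $\bar x$ equals $1$ minus that in $x$. With these in hand I would propose as candidate the function $\hat{\mathfrak{r}} \colon [0,\infty) \to [0,1]$ defined by $\hat{\mathfrak{r}}(\beta) := 1 - \mathfrak{r}(1/\beta)$ for $\beta > 0$, together with $\hat{\mathfrak{r}}(0) := 0$; this last value is forced because at $\beta = 0$ the matrix $\hat A_1^{(0)}$ vanishes, so every weakly extremal sequence for $\hat{\mathsf{A}}_0$ contains only finitely many $1$s.

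Each of clauses (iii)--(v) of the Sturmian hypothesis for $\hat{\mathsf{A}}$ then transcribes mechanically through the correspondence $u \leftrightarrow \bar u$ and $\beta \leftrightarrow 1/\alpha$: condition (iii) follows because any $x \in X_{\hat{\mathfrak{r}}(\beta)} = X_{1-\mathfrak{r}(1/\beta)}$ gives $\bar x \in X_{\mathfrak{r}(1/\beta)}$, which is weakly extremal for $\mathsf{A}_{1/\beta}$ by (iii) for $\mathsf{A}$; condition (iv) is the complement of the corresponding density statement for $\mathsf{A}$ under the bit-flip; and (v) is an immediate consequence of the scaling identity combined with the spectral-radius inequality for $\mathsf{A}$ applied to $\bar u$ (which is not cyclically balanced exactly when $u$ is not). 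The claimed formula $\mathfrak{r}(\alpha) = 1 - \hat{\mathfrak{r}}(1/\alpha)$ for $\alpha \in (0,\infty)$ is then tautological from the definition of $\hat{\mathfrak{r}}$. I do not anticipate any real obstacle here --- the lemma is purely a symmetry statement, and the only minor care required is keeping track of the exponents in the scaling identity on both indices $i=0$ and $i=1$, and treating the degenerate boundary point $\beta = 0$ separately since the Sturmian hypothesis requires $\hat{\mathfrak{r}}$ to be defined there even though the underlying dynamics of $\hat{\mathsf{A}}_0$ is trivial.
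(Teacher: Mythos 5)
Your proposal is correct and follows essentially the same route as the paper's own proof: the scaling identity $\hat{\mathcal{A}}_\alpha(x,n)=\alpha^n\mathcal{A}_{1/\alpha}(\overline{x},n)$, the consequence $\hat{\varrho}(\alpha)=\alpha\varrho(1/\alpha)$, the bit-flip symmetries $\overline{X_\gamma}=X_{1-\gamma}$ and preservation of (cyclic) balance, and the candidate $\hat{\mathfrak{r}}(\alpha):=1-\mathfrak{r}(1/\alpha)$ with $\hat{\mathfrak{r}}(0):=0$, followed by a clause-by-clause verification of (iii)--(v). No gaps; the paper's argument is the same symmetry transcription you describe.
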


\begin{proof}
Let us define $\hat{\mathsf{A}}_\alpha:=\{\hat{A}_0,\alpha \hat{A}_1\}$ for each $\alpha \geq 0$ similarly to the definition of $\mathsf{A}_\alpha$, and let $\hat{\varrho}(\alpha)=\varrho(\hat{\mathsf{A}}_\alpha)$ for all $\alpha \geq 0$. Define $\hat{\mathcal{A}}_\alpha(x,n)=\hat{A}^{(\alpha)}_{x_n} \cdots \hat{A}^{(\alpha)}_{x_1}$ for all $x \in \Sigma_2$ and $n \geq 1$, and for each $x =(x_i)\in \Sigma_2$ define a new sequence $\overline{x}$ by $\overline{x_i}:=1-x_i$. We have $x \in X_\gamma$ if and only if $x \in X_{1-\gamma}$. Finally, define $\hat{\mathfrak{r}} \colon [0,\infty) \to [0,1]$ by $\hat{\mathfrak{r}}(\alpha):=1-\mathfrak{r}(1/\alpha)$ for all $\alpha \in (0,\infty)$, and $\hat{\mathfrak{r}}(0):=0$. Note that for all $x \in \Sigma_2$, $n \geq 1$ and $\alpha \in (0,\infty)$ there holds the identity $\hat{\mathcal{A}}_\alpha(x,n) = \alpha^n \mathcal{A}_{1/\alpha}(\overline{x},n)$.
As a direct consequence we have $\hat{\varrho}(\alpha)=\alpha\varrho(1/\alpha)$ for all $\alpha \in (0,\infty)$.

We may now verify directly that $\hat{\mathsf{A}}$ satisfies the Sturmian hypothesis with $\hat{\mathfrak{r}}$ being its $1$-ratio function. The case $\alpha=0$ being trivial, let us fix $\alpha >0$. If $x \in X_{\hat{\mathfrak{r}}(\alpha)}$, then $\overline{x} \in X_{\mathfrak{r}(1/\alpha)}$ and therefore
\[\lim_{n \to \infty} \left\|\hat{\mathcal{A}}_\alpha(x,n)\right\|^{1/n}=\lim_{n \to \infty} \left\|\alpha^n\mathcal{A}_{1/\alpha}(\overline{x},n)\right\|^{1/n} =\alpha \varrho(1/\alpha)=\hat{\varrho}(\alpha)\]
as required. If $\lim_{n \to \infty} \|\hat{\mathcal{A}}_\alpha(x,n)\|^{1/n}= \hat{\varrho}(\alpha)$ then by the same token we have
\[
\lim_{n \to \infty} \|\mathcal{A}_{1/\alpha}(\overline{x},n)\|^{1/n}= \varrho(1/\alpha)
\]
and therefore
\begin{align*}
\lim_{n \to \infty} \frac{1}{n}\#\left\{1 \leq x_j \leq n \colon x_j=1\right\}&=\lim_{n \to \infty} \left(1-\frac{1}{n}\#\left\{1 \leq \overline{x}_j \leq n \colon \overline{x}_j=1\right\}\right)\\
&=1-\mathfrak{r}(1/\alpha)=\hat{\mathfrak{r}}(\alpha) \end{align*}
since Definition~\ref{leppard}(iv) applies to $\mathsf{A}_{1/\alpha}$. Finally, if $u=(u_i)_{i=1}^\ell$ is a finite word which is not cyclically balanced, then the finite word $\overline{u}=(\overline{u}_i)_{i=1}^\ell$ defined by $\overline{u}_i:=1-u_i$ is clearly also not cyclically balanced and hence
\[\rho(\hat{\mathcal{A}}_\alpha(u))= \alpha^{|u|}\rho(\mathcal{A}_{1/\alpha}(\overline{u}))< \alpha^{|u|}\varrho(1/\alpha)^{|u|}= \hat{\varrho}(\alpha)^{|u|}\]
as required. The proof is complete.
\end{proof}

The following general theorem was proved in \cite{SAEOJSR}:

\begin{theorem}\label{gen}
Let $\Delta$ be a metric space and let $A_0, A_1 \colon \Delta \to \mathbf{M}_2(\mathbb{R})$ be continuous functions such that $A_0(\lambda) \neq A_1(\lambda)$ for all $\lambda \in \Delta$. Suppose that there exists a function $\mathfrak{r} \colon \Delta \to [0,1]$ with the following property: for every $x \in \Sigma_2$ such that $ \|A_{x_n}(\lambda) \cdots A_{x_1}(\lambda)\|^{1/n}\to \varrho(\{A_0(\lambda),A_1(\lambda)\})$ in the limit as $n \to \infty$, we have $n^{-1}\{1 \leq i \leq n \colon x_i=1\} \to \mathfrak{r}(\lambda)$. Then the function $\mathfrak{r}$ is continuous.
\end{theorem}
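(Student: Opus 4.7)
I would argue by contradiction. Suppose $\mathfrak r$ fails to be continuous at some $\lambda_0\in\Delta$; by compactness of $[0,1]$, after passing to a subsequence there is $\lambda_n\to\lambda_0$ in $\Delta$ with $\mathfrak r(\lambda_n)\to\gamma\neq\mathfrak r(\lambda_0)$. My goal is to construct a $T$-invariant Borel probability measure $\mu$ on $\Sigma_2$ such that (i) $\mu([1])=\gamma$ where $[1]:=\{x\in\Sigma_2:x_1=1\}$, and (ii) $\mu$-a.e.\ $x$ is weakly extremal for the pair at $\lambda_0$. Together with the hypothesis of the theorem (every weakly extremal $x$ at $\lambda_0$ has 1-ratio $\mathfrak r(\lambda_0)$) and Birkhoff's theorem, (i) and (ii) will force $\mu([1])=\mathfrak r(\lambda_0)$, yielding the contradiction.

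For each $n$ I pick a weakly extremal $x^{(n)}\in\Sigma_2$ for the pair at $\lambda_n$, and let $\mu_n$ be a weak-$*$ accumulation point of the empirical averages $N^{-1}\sum_{k=0}^{N-1}\delta_{T^k x^{(n)}}$. Each $\mu_n$ is $T$-invariant, and because $[1]$ is clopen we have $\mu_n([1])=\mathfrak r(\lambda_n)$. For any $T$-invariant probability $\nu$ on $\Sigma_2$ define
\[
Q_\lambda(\nu):=\lim_{k\to\infty}\frac{1}{k}\int\log\|A_{x_k}(\lambda)\cdots A_{x_1}(\lambda)\|\,d\nu(x);
\]
the limit exists and equals $\inf_{k\ge 1}$ of the same expression by Fekete's lemma, since submultiplicativity of the norm plus $T$-invariance of $\nu$ makes the sequence subadditive in $k$. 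Always $Q_\lambda(\nu)\le\log\varrho(\lambda)$, from the definition of the joint spectral radius. The key claim is $Q_{\lambda_n}(\mu_n)=\log\varrho(\lambda_n)$: to get the matching lower bound, I would rewrite $\int\log\|A_{x_k}(\lambda_n)\cdots A_{x_1}(\lambda_n)\|\,d\mu_n$ as a Ces\`aro limit of shifts along $x^{(n)}$ and compare it below to $N^{-1}\log\|A_{x^{(n)}_N}(\lambda_n)\cdots A_{x^{(n)}_1}(\lambda_n)\|$ via a block-decomposition of $[1,N]$ into blocks of length $k$ plus submultiplicativity, and then let $N\to\infty$ to invoke weak extremality of $x^{(n)}$.

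Now pass to a weak-$*$ subsequential limit $\mu_n\to\mu$, so $\mu([1])=\gamma$. For each fixed $k$ the map $(\lambda,\nu)\mapsto k^{-1}\int\log\|A_{x_k}(\lambda)\cdots A_{x_1}(\lambda)\|\,d\nu$ is jointly continuous (the integrand depends on finitely many coordinates of $x$ and continuously on $\lambda$), so $Q_\lambda(\nu)$ is upper semicontinuous in $(\lambda,\nu)$ as an infimum of continuous functions. Combined with continuity of the joint spectral radius (see \cite{HS}), this yields $Q_{\lambda_0}(\mu)\ge\limsup_n Q_{\lambda_n}(\mu_n)=\log\varrho(\lambda_0)$, and hence $\mu$ is maximizing at $\lambda_0$. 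Taking the ergodic decomposition $\mu=\int\mu_e\,de$, the identity $Q_{\lambda_0}(\mu)=\int Q_{\lambda_0}(\mu_e)\,de$ (which follows from Kingman's theorem applied to the subadditive cocycle together with the ergodic decomposition) together with $Q_{\lambda_0}(\mu_e)\le\log\varrho(\lambda_0)$ forces $Q_{\lambda_0}(\mu_e)=\log\varrho(\lambda_0)$ for $de$-a.e.\ $e$. For each such $\mu_e$, Kingman gives $\lim_k k^{-1}\log\|A_{x_k}(\lambda_0)\cdots A_{x_1}(\lambda_0)\|=\log\varrho(\lambda_0)$ for $\mu_e$-a.e.\ $x$; so $\mu_e$-a.e.\ $x$ is weakly extremal for $\lambda_0$, and by the theorem's hypothesis has 1-ratio $\mathfrak r(\lambda_0)$. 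Ergodicity of $\mu_e$ with Birkhoff applied to $\chi_{[1]}$ gives $\mu_e([1])=\mathfrak r(\lambda_0)$, and integrating over $e$ yields $\mu([1])=\mathfrak r(\lambda_0)$, contradicting $\mu([1])=\gamma$.

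The principal technical obstacle is upgrading the pointwise weak extremality of $x^{(n)}$ to the integrated identity $Q_{\lambda_n}(\mu_n)=\log\varrho(\lambda_n)$: this is the bridge between a single orbit and its empirical measure, and rests on a careful block-decomposition plus submultiplicativity argument. Once this bridge is in place, the remaining steps---upper semicontinuity of $Q_\lambda$ as an infimum of continuous functions, preservation of maximality under ergodic decomposition, and the Kingman--Birkhoff conclusion on ergodic components---are a reasonably standard variational manoeuvre.
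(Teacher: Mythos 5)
The paper itself contains no proof of Theorem~\ref{gen} for you to be measured against: it is imported wholesale from \cite{SAEOJSR}. Judged on its own terms, your argument is correct and is the natural ergodic-optimization proof of this statement. You single out the right bridge --- that any weak-$*$ accumulation point $\mu_n$ of the empirical measures along a weakly extremal orbit for $\lambda_n$ satisfies $Q_{\lambda_n}(\mu_n)=\log\varrho(\lambda_n)$ --- and your block-decomposition-plus-submultiplicativity sketch of that step is the standard, valid one; the remaining ingredients (upper semicontinuity of $(\lambda,\nu)\mapsto Q_\lambda(\nu)$ as an infimum, continuity of the joint spectral radius, Kingman on ergodic components, Birkhoff for $\chi_{[1]}$) assemble into the contradiction exactly as you describe.

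Two points deserve tightening in a written version. First, the map $x\mapsto\log\|A_{x_k}(\lambda)\cdots A_{x_1}(\lambda)\|$ need not be real-valued, nor continuous in $\lambda$: the matrices are not assumed invertible, so finite products can vanish (indeed in this paper's own application one has $A_1^{(0)}=0$ at $\alpha=0$). The correct statement is that each map $(\lambda,\nu)\mapsto k^{-1}\int\log\|A_{x_k}(\lambda)\cdots A_{x_1}(\lambda)\|\,d\nu(x)$ is upper semicontinuous with values in $[-\infty,\infty)$, being the integral of a jointly upper semicontinuous integrand that is locally bounded above; this weaker property is all your argument uses, both in the bridge (where you need the $\limsup$ of the Birkhoff sums of $f_k$ along the orbit to be dominated by $\int f_k\,d\mu_n$) and in passing to the limit $(\lambda_0,\mu)$. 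Second, you should record that under the theorem's hypothesis $\varrho(\lambda)>0$ for every $\lambda$: if $\varrho(\lambda)=0$ then every sequence is weakly extremal and the common-1-ratio property fails already for $0^\infty$ and $1^\infty$; this guarantees $\log\varrho(\lambda_0)$ is finite, so the Kingman limit is genuinely equal to $\log\varrho(\lambda_0)$ almost everywhere. Incidentally, the ergodic decomposition can be dispensed with: once the Kingman limit equals $\log\varrho(\lambda_0)$ for $\mu$-a.e.\ $x$, the hypothesis gives the 1-ratio $\mathfrak{r}(\lambda_0)$ for $\mu$-a.e.\ $x$, and $T$-invariance of $\mu$ together with dominated convergence yields $\mu([1])=\mathfrak{r}(\lambda_0)$ directly, completing the contradiction without appealing to ergodic components.
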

We may now directly deduce several parts of Theorem~\ref{main}(i).
\begin{lemma}\label{lemonsauce}
Let $\mathsf{A}$ be as in Theorem~\ref{main}. Then the 1-ratio function $\mathfrak{r} \colon [0,\infty) \to [0,1]$ is continuous and satisfies $\mathfrak{r}(0)=0$ and $\lim_{\alpha \to \infty} \mathfrak{r}(\alpha)=1$.
\end{lemma}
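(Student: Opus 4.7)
The plan is to obtain the three conclusions from three different inputs already available in the excerpt: Theorem~\ref{gen} for continuity, the degeneracy of $\mathsf{A}_0$ for the value at the origin, and Lemma~\ref{switch} for the limit at infinity.

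For continuity, I would apply Theorem~\ref{gen} with metric space $\Delta = [0,\infty)$ and continuous matrix-valued functions $\alpha \mapsto A_0$ (constant) and $\alpha \mapsto \alpha A_1$. Definition~\ref{leppard}(iv) supplies precisely the hypothesis on $\mathfrak r$ required by that theorem, so the only thing that needs checking is the non-coincidence condition $A_0 \neq \alpha A_1$ for every $\alpha \in [0,\infty)$. At $\alpha = 0$ this holds because $A_0$ is invertible, and for $\alpha > 0$ any identity $A_0 = \alpha A_1$ would force every eigenline of $A_1$ to be $A_0$-invariant, contradicting the no-common-invariant-subspace clause of the technical hypotheses. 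Theorem~\ref{gen} then delivers continuity of $\mathfrak r$ on $[0,\infty)$.

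For $\mathfrak r(0) = 0$, note that $A_1^{(0)}$ is the zero matrix, so for any $x \in \Sigma_2$ containing even a single $1$ the product $\mathcal A_0(x,n)$ vanishes for all sufficiently large $n$. On the other hand $A_0$ is nonnegative and invertible with positive trace, so $\rho(A_0) > 0$ and $\|A_0^n\|^{1/n} \to \rho(A_0) = \varrho(0)$. Therefore $0^\infty$ is the unique weakly extremal sequence for $\mathsf{A}_0$, and Definition~\ref{leppard}(iv) applied to $x = 0^\infty$ gives $\mathfrak r(0) = 0$.

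For the limit at infinity, I would invoke Lemma~\ref{switch}: the swapped pair $\hat{\mathsf{A}} = \{A_1, A_0\}$ still satisfies the Sturmian hypothesis, and it obviously still satisfies the technical hypotheses since those are symmetric in the two matrices and in particular condition~(ii) is preserved under the substitution $u \mapsto \overline u$. The previous two steps applied to $\hat{\mathsf A}$ then give that its $1$-ratio function $\hat{\mathfrak r}$ is continuous on $[0,\infty)$ and vanishes at $0$. Using the relation $\mathfrak r(\alpha) = 1 - \hat{\mathfrak r}(1/\alpha)$ from Lemma~\ref{switch} for $\alpha \in (0,\infty)$, we conclude $\lim_{\alpha \to \infty}\mathfrak r(\alpha) = 1 - \hat{\mathfrak r}(0) = 1$. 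The only slightly delicate point in the whole argument is the non-coincidence verification needed to apply Theorem~\ref{gen}; everything else is an essentially formal assembly of Theorem~\ref{gen}, Lemma~\ref{switch}, and the definitions.
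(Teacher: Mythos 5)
Your proof is correct and follows essentially the same route as the paper: continuity via Theorem~\ref{gen}, a direct computation with the degenerate pair $\mathsf{A}_0$ to get $\mathfrak{r}(0)=0$, and Lemma~\ref{switch} together with continuity at $0$ for the limit at infinity. The only difference is that you explicitly verify the hypothesis $A_0\neq\alpha A_1$ of Theorem~\ref{gen} (using the absence of a common invariant subspace), a point the paper passes over in silence; this is a harmless and welcome addition.
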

\begin{proof}
The continuity of $\mathfrak{r}$ follows immediately from Theorem~\ref{gen}. In the case $\alpha=0$ it is obvious that $\|\mathcal{A}_\alpha(x,n)\|^{1/n} \to \rho(A_0)>0$ when $x \in X_0$ and $\|\mathcal{A}_\alpha(x,n)\|^{1/n}\to 0$ for all other $x$, and it follows that $\mathfrak{r}(0)=0$. In particular we have $\lim_{\alpha \to 0}\mathfrak{r}(\alpha)=0$ by continuity. Let $\hat{\mathsf{A}}$ and $\hat{\mathfrak{r}}$ be as in Lemma~\ref{switch}; applying the preceding arguments to $\hat{\mathsf{A}}$ it follows that $\lim_{\alpha \to 0}\hat{\mathfrak{r}}(\alpha)=0$, and therefore
\[\lim_{\alpha \to \infty} \mathfrak{r}(\alpha) = \lim_{\alpha \to \infty} \left(1-\hat{\mathfrak{r}}(1/\alpha)\right)=\lim_{\alpha \to 0} \left(1-\mathfrak{r}(\alpha)\right)=1.\]
\end{proof}

The following proposition, which characterises $\mathfrak{r}$ in terms of a concave function on the unit interval, forms the cornerstone of the proof of Theorem~\ref{main}. In the special case where $A_0$ and $A_1$ are as defined by \eqref{hmstmat}, the results of Proposition~\ref{sfunction} correspond approximately to those of \cite[Proposition~6.1]{HMST}.
\begin{proposition}\label{sfunction}
Let $\mathsf{A}:=\{A_0,A_1\}$ be as in Theorem~\ref{main}. Then there exists a continuous concave function $S \colon [0,1] \to \mathbb{R}$ with the following properties:

\begin{enumerate}
\item
For each $\gamma \in [0,1]$ we have $\frac{1}{n} \log \|\mathcal{A}(x,n)\| \to S(\gamma)$ for every $x \in X_\gamma$.
\item
For each $\alpha \in [0,\infty)$ and $\gamma \in [0,1]$ we have $e^{S(\gamma)}\alpha^\gamma \leq \varrho(\alpha)$
with equality if and only if $\gamma=\mathfrak{r}(\alpha)$. Consequently, for nonzero $\alpha$ we have $\mathfrak{r}(\alpha)=\gamma$ if and only if $-\log\alpha$ is a subgradient of $S$ at $\gamma$.
\item
If $u$ is a word of length $kq$ with $\varsigma(u)=p/q$, then $(kq)^{-1}\log\rho(\mathcal{A}(u))\leq S(p/q)$, with equality if and only if $u$ is cyclically balanced.
\end{enumerate}
\end{proposition}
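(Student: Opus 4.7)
The plan has four steps: define $S$ as a pointwise limit and establish~(i) via the subadditive ergodic theorem; deduce (ii) from a simple scaling identity; derive concavity and continuity of $S$ by identifying it as the concave conjugate of $t\mapsto\log\varrho(e^t)$; and prove (iii) directly from (i), (ii), and Sturmian hypothesis~(v). For step~(i), the sequence $f_n(x):=\log\|\mathcal{A}(x,n)\|$ is a continuous subadditive cocycle over the shift $T$ by submultiplicativity of the operator norm. Theorem~\ref{Sturm}(ii) gives unique ergodicity of $T|_{X_\gamma}$, and a uniform subadditive ergodic theorem suited to this setting (cf.~\cite[Theorem~2.3]{SAEOJSR}) yields convergence of $n^{-1}f_n(x)$ to a constant $S(\gamma)$ uniformly in $x\in X_\gamma$. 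For rational $\gamma=p/q$ in least terms, $X_\gamma$ is a single periodic orbit of cardinality $q$, and $S(p/q) = q^{-1}\log\rho(\mathcal{A}(v))$ for any period-$q$ word $v$ in $X_\gamma$ follows by direct computation.

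For (ii), the scaling identity $\mathcal{A}_\alpha(x,n) = \alpha^{\#\{i\leq n\,:\,x_i=1\}}\mathcal{A}(x,n)$, combined with Theorem~\ref{Sturm}(iii) and step~(i), gives $\|\mathcal{A}_\alpha(x,n)\|^{1/n} \to \alpha^\gamma e^{S(\gamma)}$ for every $x\in X_\gamma$. Since no orbit growth rate can exceed the joint spectral radius, $\alpha^\gamma e^{S(\gamma)} \leq \varrho(\alpha)$. When $\gamma=\mathfrak{r}(\alpha)$, equality is exactly Sturmian hypothesis~(iii); conversely, equality forces every $x\in X_\gamma$ to be weakly extremal for $\mathsf{A}_\alpha$, so Sturmian hypothesis~(iv) together with Theorem~\ref{Sturm}(iii) identifies $\gamma$ with $\mathfrak{r}(\alpha)$. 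The subgradient reformulation is a direct rewriting of the equality condition $S(\gamma)+\gamma\log\alpha = \max_{\gamma'\in[0,1]}(S(\gamma')+\gamma'\log\alpha)$ as an affine support inequality.

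For concavity and continuity, introduce $F(t):=\log\varrho(e^t)$. Step~(ii) gives $F(t) = \max_{\gamma\in[0,1]}(S(\gamma)+\gamma t)$, so $F$ is convex and continuous, and its concave conjugate $G(\gamma):=\inf_{t\in\mathbb{R}}(F(t)-\gamma t)$ is concave on $[0,1]$ as an infimum of affine functions of $\gamma$. The bound $S \leq G$ is immediate from the inequality in (ii); for the reverse, Lemma~\ref{lemonsauce} and the intermediate value theorem make $\mathfrak{r}\colon[0,\infty)\to[0,1]$ surjective, so for every $\gamma\in(0,1)$ there exists $\alpha>0$ with $\mathfrak{r}(\alpha)=\gamma$, and the equality case of (ii) gives $S(\gamma) = F(\log\alpha) - \gamma\log\alpha \geq G(\gamma)$. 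The boundary values $\gamma=0,1$ are handled separately via $\lim_{t\to-\infty}F(t)=\log\rho(A_0)=S(0)$ and $\lim_{t\to+\infty}(F(t)-t) = \log\rho(A_1)=S(1)$, using the trivial sequences $0^\infty$ and $1^\infty$. Hence $S=G$ is concave on $[0,1]$, and a finite concave function on a closed interval is automatically continuous (the secant-line inequality gives lower semicontinuity at the endpoints, complementing the upper semicontinuity inherent to concavity).

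For (iii), let $u$ be a finite word of length $kq$ with slope $p/q$. If $u$ is cyclically balanced, then $u^\infty$ is balanced and periodic, hence recurrent, so $u^\infty\in X_{p/q}$; applying (i) along the subsequence $n=kqm$ gives $(kq)^{-1}\log\rho(\mathcal{A}(u)) = S(p/q)$. If $u$ is not cyclically balanced, Sturmian hypothesis~(v) yields $\alpha^{kp}\rho(\mathcal{A}(u)) = \rho(\mathcal{A}_\alpha(u)) < \varrho(\alpha)^{kq}$ for every $\alpha>0$; selecting $\alpha$ with $\mathfrak{r}(\alpha)=p/q$ (existence guaranteed by Lemma~\ref{lemonsauce}) and invoking the equality case of~(ii) reduces this to the strict inequality $(kq)^{-1}\log\rho(\mathcal{A}(u)) < S(p/q)$. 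The principal obstacle is step~(i): upgrading the conclusion of the subadditive ergodic theorem from almost-everywhere convergence to pointwise uniform convergence on $X_\gamma$ for irrational $\gamma$. The positivity afforded by the technical hypotheses together with unique ergodicity supplies the requisite uniformity; granted step~(i), the remaining three steps are essentially convex-analytic duality.
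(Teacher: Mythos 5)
Your treatment of parts (ii) and (iii), and the identification of $S$ with a concave conjugate of $t\mapsto\log\varrho(e^t)$, follow essentially the same convex-duality line as the paper. The genuine gap is in your step (i) for irrational $\gamma$. You invoke a ``uniform subadditive ergodic theorem'' to get $n^{-1}\log\|\mathcal{A}(x,n)\|\to S(\gamma)$ for \emph{every} $x\in X_\gamma$, but no such off-the-shelf result exists: for a continuous subadditive cocycle over a uniquely ergodic system one only gets the uniform upper bound $\limsup_n\sup_x n^{-1}f_n(x)\le\Lambda$ (Furman-type results), and everywhere convergence to $\Lambda$ can fail for genuinely subadditive cocycles. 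The repair you gesture at -- bounded gaps in Sturmian words give positive blocks, hence an almost-additivity estimate $\|\mathcal{A}(x,n+m)\|\ge c\,\|\mathcal{A}(x,n)\|\,\|\mathcal{A}(T^nx,m)\|$, hence everywhere convergence -- is correct in spirit but is a real argument which you explicitly defer (``the principal obstacle'') and never supply. Moreover it is unnecessary: the Sturmian hypothesis hands you (i) directly. Choose $\alpha>0$ with $\mathfrak{r}(\alpha)=\gamma$ (Lemma~\ref{lemonsauce} plus the intermediate value theorem -- you already use this surjectivity in your duality step), write $\log\|\mathcal{A}(x,n)\|=\log\|\mathcal{A}_\alpha(x,n)\|-\#\{j\le n:x_j=1\}\log\alpha$, and note that Definition~\ref{leppard}(iii) makes every $x\in X_\gamma$ weakly extremal for $\mathsf{A}_\alpha$ while Theorem~\ref{Sturm}(iii) controls the frequency; this yields $n^{-1}\log\|\mathcal{A}(x,n)\|\to\log\varrho(\alpha)-\gamma\log\alpha$ for every $x\in X_\gamma$, which is exactly how the paper defines $S(\gamma)$, with no ergodic theorem at all. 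So your architecture works, but as written the cornerstone of step (i) is asserted rather than proved, and the simpler route is already implicit in the hypotheses you use elsewhere.

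A second, smaller flaw: your justification of continuity at the endpoints is wrong as stated. A finite concave function on a closed interval need \emph{not} be continuous there (take $f(0)=-1$ and $f\equiv 0$ on $(0,1]$); concavity gives \emph{lower}, not upper, semicontinuity at the endpoints, so ``the upper semicontinuity inherent to concavity'' is not available. In your framework the conclusion can still be rescued, because $S=G$ is an infimum of the affine functions $\gamma\mapsto F(t)-\gamma t$ and is therefore upper semicontinuous, which together with the concavity (secant) bound gives continuity at $0$ and $1$ -- but this also requires the endpoint identifications $G(0)=\log\rho(A_0)$ and $G(1)=\log\rho(A_1)$, for which you need the continuity and monotonicity of $\varrho$ (or the switched family of Lemma~\ref{switch}). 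The paper instead proves endpoint continuity by a bespoke computation: it uses (iii) to write $S(1/(n+1))=(n+1)^{-1}\log\rho(A_0^nA_1)$ and passes to a subsequence along which $\|A_0^{n_j}\|^{-1}A_0^{n_j}$ converges, showing $S(1/(n_j+1))\to S(0)$. Either fix is fine, but the one you wrote down is not a proof.
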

\begin{proof}

We will show first that there exists a function $S \colon [0,1] \to \mathbb{R}$ such that properties (i)-(iii) hold, and show only at the end of the proof that this function is continuous and concave. We begin by constructing a function $S$ which satisfies (i). Since $X_0$ contains only the single point $0^\infty$ it is clear that (i) holds for $\gamma=0$ with $S(0):=\log\rho(A_0)$, and similarly for $S(1):=\log\rho(A_1)$. Let us therefore consider $\gamma \in (0,1)$ and $x \in X_\gamma$. Using Lemma~\ref{lemonsauce} we may choose $\alpha>0$ such that $\mathfrak{r}(\alpha)=\gamma$. For each $n \geq 1$ we have
\[\log\|\mathcal{A}(x,n)\| = \log\left\|\mathcal{A}_\alpha(x,n)\right\| - \#\left\{1 \leq j \leq n \colon x_j=1\right\}\cdot\log\alpha\]
and it follows by Definition~\ref{leppard}(iii) together with Theorem~\ref{Sturm}(iii) that
\begin{equation}\label{mooo}\lim_{n \to \infty}\frac{1}{n}\log\|\mathcal{A}(x,n)\| = \log\varrho(\alpha)-\gamma\log\alpha.\end{equation}
Since the left hand side of this equation does not depend on the choice of $\alpha \in \mathfrak{r}^{-1}(\gamma)$ and the right hand side does not depend on the choice of $x \in X_\gamma$, we conclude that the identity \eqref{mooo} holds for all such choices. In particular if we define $S(\gamma):=\log\varrho(\alpha)-\gamma\log\alpha$ then (i) is satisfied. Now, if $\alpha \in (0,\infty)$ is given, then for any $\gamma \in [0,1]$ we have for all $x \in X_\gamma$
\begin{align*}
S(\gamma)+\gamma\log\alpha &= \lim_{n \to \infty} \frac{1}{n}\left(\log\left\|\mathcal{A}(x,n)\right\| +\#\left\{1 \leq j \leq n \colon x_j=1\right\}\cdot\log\alpha\right)\\
&=\lim_{n \to \infty}\frac{1}{n}\log\|\mathcal{A}_\alpha(x,n)\| \leq\log \varrho(\alpha).
\end{align*}
It follows from parts~(iii) and (iv) of Definition~\ref{leppard} that the above inequality is an equality if and only if $\gamma=\mathfrak{r}(\alpha)$. This proves (ii) for all cases except those in which $\alpha=0$, which it is trivial to verify directly.

Let us now prove (iii). Let $u$ be as in the statement of the Proposition, and define $x:=u^\infty \in \Sigma_2$.
If $u$ is cyclically balanced then $x$ is Sturmian, and since $\varsigma(u)=p/q$ we necessarily have $x \in X_{p/q}$. Using (i) together with Gelfand's formula we may obtain
\[S(p/q)=\lim_{n \to \infty}\frac{1}{nkq}\log\|\mathcal{A}(x,nkq)\| = \lim_{n \to \infty}\frac{1}{nkq}\log\|\mathcal{A}(u)^n\|= \frac{1}{kq}\log\rho(\mathcal{A}(u)).\]
 Now suppose that $u$ is not cyclically balanced. Using Lemma~\ref{lemonsauce} let us choose $\alpha>0$ such that $\mathfrak{r}(\alpha)=p/q$. Using Definition~\ref{leppard}(v) together with part (ii) above we obtain
\[
\rho(\mathcal{A}(u))\cdot\alpha^{kp} = \rho(\mathcal{A}_\alpha(u))<\varrho(\alpha)^{kq} = e^{kq\cdot S(p/q)}\alpha^{kp}
\]
and therefore $(kq)^{-1}\log\rho(\mathcal{A}(u))<S(p/q)$ as required to prove (iii).

It remains to show that $S$ is continuous and concave. As a consequence of (ii) we have for every $\gamma \in (0,1)$
\[
S(\gamma)=\inf_{\alpha \in (0,\infty)} \log\varrho(\alpha)-\gamma\log\alpha.
\]
The restriction of $S$ to $(0,1)$ is thus an infimum over a set of affine functions of $\gamma$, and hence is concave. It follows from standard results in convex analysis that the restriction of $S$ to $(0,1)$ is also continuous.

Finally let us show that $S$ is continuous on $[0,1]$, and hence is also concave on that interval. We will show that $S$ is continuous at $0$, the case of continuity at $1$ being similar. Since $S$ is concave, the limit of $S$ at $0$ exists, so it suffices to show that there exists a single sequence $(\gamma_n)$ of elements of $(0,1)$ which converges to zero and has the property that $S(\gamma_n)$ converges to $S(0)$. To this end, let us choose a strictly increasing sequence of integers $(n_j)$ such that the sequence $\|A_0^{n_j}\|^{-1}A_0^{n_j}$ converges to some matrix $P$. For each $n \geq 0$ it is not difficult to see that the word $0^n1$ is cyclically balanced, and therefore $S(1/(n+1))=(n+1)^{-1}\log\rho(A_0^nA_1)$ using (iii). We thus have
\begin{align*}
\lim_{j \to \infty}S(1/(n_j+1))-S(0) &= \lim_{j \to \infty}\frac{1}{n_j+1}\log\rho\left(A_0^{n_j}A_1\right) - \log \rho\left(A_0\right)\\
&= \lim_{j \to \infty}\frac{1}{n_j+1}\left(\log\rho\left(A_0^{n_j}A_1\right) - \log \left\|A_0^{n_j}\right\|\right)\\
&= \lim_{j \to \infty}\frac{1}{n_j+1}\log\rho \left({\left\|A_0^{n_j}\right\|}^{-1}A_0^{n_j}A_1\right)=0 \end{align*}
since $\rho(\left\|A_0^{n_j}\right\|^{-1}A_0^{n_j}A_1)$ converges to $\rho(PA_1) $ as $j \to \infty$. The proof is complete.
\end{proof}
The following result together with Lemma~\ref{lemonsauce} completes the proof of Theorem~\ref{main}(i).

\begin{corollary}
The $1$-ratio function $\mathfrak{r}$ is non-decreasing.
\end{corollary}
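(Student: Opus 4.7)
The plan is to deduce monotonicity of $\mathfrak{r}$ directly from Proposition~\ref{sfunction}, in particular from the subgradient characterisation given in part~(ii), together with the well-known fact that for a concave function on an interval the subdifferential correspondence is monotone non-increasing in the argument.

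More precisely, I would argue by contradiction. Suppose $0 < \alpha_1 < \alpha_2$ but $\mathfrak{r}(\alpha_1) > \mathfrak{r}(\alpha_2)$; set $\gamma_i := \mathfrak{r}(\alpha_i)$, so $\gamma_1 > \gamma_2$. By Proposition~\ref{sfunction}(ii), $-\log \alpha_i$ is a subgradient of the concave function $S$ at $\gamma_i$; explicitly this means
\[
S(\gamma) \leq S(\gamma_i) - (\log \alpha_i)(\gamma - \gamma_i) \quad \text{for every } \gamma \in [0,1].
\]
Adding the two inequalities obtained by evaluating the $i=1$ inequality at $\gamma=\gamma_2$ and the $i=2$ inequality at $\gamma=\gamma_1$, and cancelling the $S(\gamma_i)$ terms, produces $(\log \alpha_1 - \log \alpha_2)(\gamma_1 - \gamma_2) \geq 0$. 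Since $\gamma_1 - \gamma_2 > 0$, this forces $\log\alpha_1 \geq \log\alpha_2$, contradicting $\alpha_1 < \alpha_2$.

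It remains to handle the boundary value $\alpha = 0$; but Lemma~\ref{lemonsauce} gives $\mathfrak{r}(0)=0$, which is already the minimum value of $\mathfrak{r}$ on $[0,\infty)$, so monotonicity at $0$ is automatic. The only potential obstacle in the above argument is the legitimacy of applying the subgradient inequality simultaneously at two interior points; this is immediate from concavity of $S$ on $[0,1]$ established in Proposition~\ref{sfunction}, so no additional work is required.
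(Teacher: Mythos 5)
Your argument is correct and is essentially the paper's proof: the paper likewise deduces monotonicity from the subgradient characterisation in Proposition~\ref{sfunction}(ii) together with the elementary fact that subgradients of a concave function are non-increasing in the base point, which is exactly what your addition of the two subgradient inequalities establishes. The handling of $\alpha=0$ via $\mathfrak{r}(0)=0$ is also fine, so no further work is needed.
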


\begin{proof}
It is an elementary fact in convex analysis that if $\lambda_1$ and $\lambda_2$ are subgradients of a concave function at $\gamma_1$ and $\gamma_2$ respectively, and $\gamma_1<\gamma_2$, then $\lambda_1\ge\lambda_2$. The result now follows by Proposition~\ref{sfunction}.
\end{proof}

\section{Standard words}
In this section we exploit some well-known features of Sturmian words to obtain a pair of propositions dealing with the combinatorial structure of the sets $X_{p/q}$.

Let us define two maps from the set of standard pairs $\mathcal{P}$ to itself by $\Gamma(u,v):=(u,uv)$ and $\Delta(u,v):=(vu,v)$. Throughout the remainder of the paper we use the following notation for continued fractions: if $a_1,\ldots,a_n$ are positive integers, then we use the symbol $[a_1,\ldots,a_n]$ to denote the finite continued fraction
\[
\frac{p_n}{q_n}:=\cfrac{1}{a_1
          + \cfrac{1}{a_2
          + \ldots+\cfrac{1}{a_n}}},
\]
where $p_n$ and $q_n$ are coprime. Given positive integers $a_1,\ldots,a_n$ let us write $p_k/q_k:=[a_1,\ldots,a_k]$ for all $k$ in the range $1 \leq k \leq n$, and define also $p_0:=0$, $q_0:=1$,  $p_{-1}:=1$, $q_{-1}:=0$. Subject to these conventions, for each integer $k$ in the range $1 \leq k \leq n$ the integers $p_k,q_k$ satisfy the recurrence relations $p_k = a_k p_{k-1}+p_{k-2}$ and $q_k = a_k q_{k-1}+q_{k-2}$. If $(a_i)_{i=1}^\infty$ is sequence of positive integers, we use the notation $\gamma = [a_1,a_2,\ldots]$ to mean $\lim_{n \to \infty} [a_1,\ldots,a_n]=\gamma$.

The following proposition will be applied in the proof that $\mathfrak{r}^{-1}([0,1]\setminus \mathbb{Q})$ has zero Hausdorff dimension:

\begin{proposition}
\label{cworp}
Let $\gamma=p_n/q_n = [a_1,\ldots,a_n]$ and $p_{n-1}/q_{n-1}=[a_1,\ldots,a_{n-1}]$ where $n \geq 2$ and $a_n>1$. Then there exist $x \in X_\gamma$ and an integer $k > \frac{1}{3}q_n$ such that $d(x,T^{q_{n-1}}x) \leq 2^{-k}$ and $q_{n-1}^{-1}\log\rho(\mathcal{A}(T^kx,q_{n-1}))= S(p_{n-1}/q_{n-1})$.
\end{proposition}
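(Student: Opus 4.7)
The plan is to take $x$ to be the periodic standard word $x:=s_n^\infty$ and to choose $k$ so that an initial prefix of $x$ is exactly $q_{n-1}$-periodic. Since $s_n$ is a standard word it is cyclically balanced, so $s_n^\infty$ is Sturmian of slope $p_n/q_n$ and therefore $x\in X_\gamma$. The construction will rely on two features of standard words: the recurrence $s_n = s_{n-1}^{a_n} s_{n-2}$, and the fact that $s_{n-1}$ itself begins with $s_{n-2}$ (which follows from $s_{n-1} = s_{n-2}^{a_{n-1}} s_{n-3}$ when $n\geq 3$, and from $s_1 = s_0^{a_1-1} s_{-1}$ when $n=2$, provided $a_1\geq 2$).

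I would set $k := (a_n - 1) q_{n-1} + q_{n-2}$. The first $a_n q_{n-1}$ letters of $x$ form $s_{n-1}^{a_n}$, which is literally $q_{n-1}$-periodic, so $x_i = x_{i+q_{n-1}}$ for every $1\leq i\leq (a_n-1)q_{n-1}$. For $(a_n-1)q_{n-1} < i \leq k$, write $j := i - (a_n-1)q_{n-1}\in\{1,\ldots,q_{n-2}\}$; then position $i$ is the $j$-th letter of the last $s_{n-1}$-copy and hence equals $(s_{n-2})_j$ (since $s_{n-1}$ begins with $s_{n-2}$), while position $i+q_{n-1}$ falls in the trailing $s_{n-2}$-block at position $j$, so also equals $(s_{n-2})_j$. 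This gives $d(x, T^{q_{n-1}}x)\leq 2^{-k}$.

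Next, note that $k+q_{n-1} = a_n q_{n-1} + q_{n-2} = q_n$, so the word $u := (x_{k+1},\ldots, x_{k+q_{n-1}})$ is precisely the length-$q_{n-1}$ suffix of the initial block $s_n$ of $x$. Writing $s_{n-1}=s_{n-2}\cdot r$ with $|r|=q_{n-1}-q_{n-2}$, this suffix equals $r\cdot s_{n-2}$, a cyclic permutation of the standard word $s_{n-1}$, and so is itself cyclically balanced with slope $p_{n-1}/q_{n-1}$. Proposition~\ref{sfunction}(iii) applied to $u$ (length $q_{n-1}$, slope $p_{n-1}/q_{n-1}$ in lowest terms) then yields $q_{n-1}^{-1}\log\rho(\mathcal{A}(u)) = S(p_{n-1}/q_{n-1})$, which is the required spectral identity since $\mathcal{A}(T^kx,q_{n-1})=\mathcal{A}(u)$.

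The lower bound $k>q_n/3$ is now immediate from $q_n = a_n q_{n-1} + q_{n-2}$ and $a_n\geq 2$:
\[
3k - q_n = (2a_n - 3)\, q_{n-1} + 2\, q_{n-2} \;\geq\; q_{n-1} + 2\, q_{n-2} \;>\; 0.
\]
The main obstacle is the degenerate case $n=2$ with $a_1=1$, in which $s_1=1$ does not begin with $s_0=0$ and the extension of the periodicity by $q_{n-2}$ positions is unavailable; this case must be dispatched separately, essentially by hand using the explicit form $s_n=1^{a_n}0$ together with an appropriate cyclic shift of $s_n^\infty$.
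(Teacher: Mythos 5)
Your construction is essentially the paper's: the same point $x=s_n^\infty\in X_\gamma$, the same use of the recursion $s_n=s_{n-1}^{a_n}s_{n-2}$, and the same appeal to Proposition~\ref{sfunction}(iii). The only substantive deviation is your larger choice $k=(a_n-1)q_{n-1}+q_{n-2}$, which buys an extra $q_{n-2}$ letters of agreement at the cost of needing $s_{n-1}$ to begin with $s_{n-2}$ and of replacing $s_{n-1}$ by one of its cyclic shifts in the spectral step. The paper simply takes $k=(a_n-1)q_{n-1}$: then $x$ and $T^{q_{n-1}}x$ are prefixed by $s_{n-1}^{a_n}$ and $s_{n-1}^{a_n-1}$ respectively, $T^kx$ is prefixed by $s_{n-1}$ itself (so Proposition~\ref{sfunction}(iii) applies directly, with no cyclic-shift discussion), and the bound on $k$ comes from $q_n=a_nq_{n-1}+q_{n-2}<(a_n+1)q_{n-1}\le 3(a_n-1)q_{n-1}$, uniformly in $n$ and without any case split on $a_1$.

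The case you defer is therefore the one real gap, and it is slightly more delicate than your closing sentence suggests. For $n=2$, $a_1=1$, $a_2=2$ (i.e.\ $\gamma=2/3$, $q_2=3$, $q_1=1$) no cyclic shift can rescue your larger $k$: the set $X_{2/3}$ consists of the three shifts of $(110)^\infty$, and none of them agrees with its image under $T^{q_1}=T$ in the first two coordinates, so no pair $(x,k)$ with $k\ge 2>\frac13 q_2$ exists and the best attainable is $k=1=\frac13 q_2$. (The same boundary is visible in the paper's own display, where for $n=2$, $a_1=1$ the step $q_n<(a_n+1)q_{n-1}$ degenerates to equality; what the argument really yields there is $k\ge\frac13 q_n$, which is all that is used in Lemma~\ref{fourpointsix}.) For $a_2\ge 3$ your ``by hand'' dispatch does work with the paper's choice $k=a_2-1$, since then $3(a_2-1)>a_2+1$. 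So either adopt the smaller $k=(a_n-1)q_{n-1}$ from the outset, which removes the case split entirely, or, if you keep your $k$, handle $n=2$, $a_1=1$ with $k=a_2-1$ and note that when $a_2=2$ only the non-strict bound $k\ge\frac13 q_n$ is available (and suffices for the intended application).
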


\begin{proof}
Let $s_{-1}=1$, $s_0=0$ and $s_1:=s_0^{a_1-1}s_{-1}$. Define $s_k$ inductively for $1 < k \leq n$ by $s_k:=s_{k-1}^{a_k}s_{k-2}$. For $k=1$ we have $(s_0,s_1)=\Gamma^{a_1-1}((0,1))$. An easy proof by induction shows that for odd $k>1$,
\[(s_{k-1},s_k)=(\Gamma^{a_k} \circ \Delta^{a_{k-1}} \circ  \cdots \circ \Delta^{a_2}\circ \Gamma^{a_1-1})((0,1))\]
and for even $k$,
\[
(s_k,s_{k-1})=(\Delta^{a_k} \circ \Gamma^{a_{k-1}} \circ  \cdots \circ \Delta^{a_2}\circ \Gamma^{a_1-1})((0,1)),
\]
so in particular each $s_k$ is standard, and hence is cyclically balanced. Define $p_k/q_k:=[a_1,\ldots,a_k]$ for $1 \leq k \leq n$. We have $|s_1|_1=1=p_1$, $|s_1|=a_1=q_1$, and $|s_{k}|_1=a_{k}|s_{k-1}|_1 + |s_{k-2}|_1$ and $|s_{k}|=a_{k}|s_{k-1}|+|s_{k-2}|$ for $1 <k \leq n$. It follows by induction that $|s_k|_1=p_k$ and $|s_k|=q_k$ for $1 \leq k \leq n$. In particular we have $\varsigma(s_n)=p_n/q_n=\gamma$, and since $s_n$ is cyclically balanced it follows that $x:=s_n^\infty \in X_\gamma$. The formula $s_n=s_{n-1}^{a_n}s_{n-2}$ implies that the infinite word $x$ is prefixed by the finite word $s_{n-1}^{a_n}$ and the infinite word $T^{q_{n-1}}x$ is prefixed by the finite word $s_{n-1}^{a_n - 1}$. In particular we have $d(x,T^{q_{n-1}}x) \leq 2^{-k}$ where $k=(a_n-1)q_{n-1}$. We have
\[
q_n=a_nq_{n-1}+q_{n-2} < (a_n+1)q_{n-1}  \leq 3(a_n-1)q_{n-1},
 \]
since $a_n>1$, and thus $k>\frac{1}{3}q_n$ as claimed. Finally we note that $T^kx$ is prefixed by the finite cyclically balanced word $s_{n-1}$ and so by Proposition~\ref{sfunction}(iii) we have
\[
q_{n-1}^{-1}\log \rho(\mathcal{A}(T^kx,q_{n-1}))= |s_{n-1}|^{-1}\log\rho(\mathcal{A}(s_{n-1}))= S(p_{n-1}/q_{n-1}).
\]
The proof is complete.
\end{proof}

The following proposition will be used in the proof that $\mathfrak{r}^{-1}(p/q)$ is an interval:

\begin{proposition}\label{sw2}
Let $p/q \in (0,1)$ with $p$ and $q$ coprime. Then there exists a standard pair $(u,v)$ such that $\varsigma(uv)=p/q$. If $(u,v)$ is such a pair then $|uv|_1=p$, $|uv|=q$, $|u|\cdot|v|_1-|u|_1\cdot|v|=1$, the words $(uv)^nu$ and $(vu)^nv$ are cyclically balanced for all $n \geq 0$, and the word $u^2v^2$ is not cyclically balanced.
\end{proposition}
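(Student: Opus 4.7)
The proof breaks into four steps. For the first, existence, I would use the continued fraction expansion $p/q=[a_1,\dots,a_n]$ (with $a_n\ge 2$) and the associated sequence of finite words $s_{-1}=1$, $s_0=0$, $s_1=s_0^{a_1-1}s_{-1}$, $s_k=s_{k-1}^{a_k}s_{k-2}$ from the proof of Proposition~\ref{cworp}; tracing through the operations that generate $\mathcal{P}$, one produces an explicit standard pair whose concatenation equals $s_n$ (or a conjugate of it), and the recurrences $|s_k|=q_k$, $|s_k|_1=p_k$ recorded there give $\varsigma(uv)=p/q$.

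For the determinantal identity, I would induct on the construction of $\mathcal{P}$: the pair $(0,1)$ has $|u|\,|v|_1-|u|_1\,|v|=1$, and a one-line calculation shows that each of the two defining operations $(u,v)\mapsto(uv,v)$ and $(u,v)\mapsto(u,vu)$ preserves this quantity. It then follows from the algebraic identity $|u|(|u|_1+|v|_1)-|u|_1(|u|+|v|)=|u|\,|v|_1-|u|_1\,|v|=1$ that any common divisor of $|uv|$ and $|uv|_1$ must divide $1$, so the slope $\varsigma(uv)$ is already in lowest terms; combining with $\varsigma(uv)=p/q$ yields $|uv|=q$ and $|uv|_1=p$.

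For the cyclic balance of $(uv)^n u$, I would exhibit $((uv)^n u,\,vu)$ as a standard pair by induction on $n$. Given $(u,v)\in\mathcal{P}$, first apply the operation $(u,v)\mapsto(u,vu)$ to obtain $(u,vu)\in\mathcal{P}$; then iterate $(u',v')\mapsto(u'v',v')$ on this new pair: since $(uv)^k u\cdot vu=(uv)^{k+1}u$, induction gives $((uv)^n u,\,vu)\in\mathcal{P}$ for every $n\ge 1$, so $(uv)^n u$ is a standard word and hence cyclically balanced. The $n=0$ case is trivial because $u$ is the first component of the standard pair $(u,v)$. A completely symmetric procedure -- starting from $(uv,v)\in\mathcal{P}$ and iterating $(u',v')\mapsto(u',v'u')$ -- yields $(uv,(vu)^n v)\in\mathcal{P}$ and establishes the cyclic balance of $(vu)^n v$.

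Finally, I would prove that $u^2v^2$ is not cyclically balanced by contradiction. A short induction on $\mathcal{P}$ shows that for every standard pair $(u,v)$ the first letter of $u$ is $0$ and the first letter of $v$ is $1$, since each defining operation leaves the first letters of the two components unchanged. Now suppose $u^2v^2$ were cyclically balanced: then $(u^2v^2)^\infty$ would be balanced and recurrent, hence Sturmian, of slope $p/q$. By Theorem~\ref{Sturm}(iv) every element of $X_{p/q}$ is periodic of period $q$, so $(u^2v^2)^\infty$ has period $q$, and consequently $(u^2v^2)[i]=(u^2v^2)[i+q]$ for all $1\le i\le q$. Setting $i=|u|+1$, the position $|u|+1$ is the start of the second copy of $u$ inside $u^2v^2$, while the position $|u|+1+q=2|u|+|v|+1$ is the start of the second copy of $v$; the identity therefore gives $u_1=v_1$, i.e.\ $0=1$, a contradiction. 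The main obstacle is the bookkeeping in the first step, namely translating the continued-fraction expansion of $p/q$ into the correct sequence of operations from the definition of $\mathcal{P}$; the remaining steps reduce to straightforward inductions and direct calculation.
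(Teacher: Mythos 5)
Your treatment of existence, of the identity $|u|\cdot|v|_1-|u|_1\cdot|v|=1$ with the ensuing coprimality deduction of $|uv|=q$ and $|uv|_1=p$, and of the cyclic balance of $(uv)^nu$ and $(vu)^nv$ coincides in substance with the paper's argument (the paper realises these words as standard words via the maps $\Gamma,\Delta$ of \S4; you obtain them directly from the closure rules in the definition of $\mathcal{P}$). The genuine difference is the last claim. The paper quotes the fact that every standard pair satisfies $uv=p01$ and $vu=p10$ for a common word $p$, notes that $u^2v^2$ is cyclically equivalent to $uv\cdot vu=p01p10$, hence to $0p01p1$, and reads off the unbalanced subwords $0p0$ and $1p1$; this is purely local and combinatorial. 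You instead argue that if $u^2v^2$ were cyclically balanced then $(u^2v^2)^\infty$ would be balanced and recurrent, hence Sturmian with $1$-frequency $p/q$, hence an element of $X_{p/q}$ and therefore $q$-periodic, contradicting $u_1=0\neq 1=v_1$; this is correct and trades the Lothaire factorisation fact for Theorem~\ref{Sturm} plus a first-letter invariant of standard pairs. Two small points to tighten. First, Theorem~\ref{Sturm}(iv) literally gives only $\#X_{p/q}=q$; the $q$-periodicity you invoke is most cleanly obtained from the mechanical formula in Theorem~\ref{Sturm} (with $\gamma=p/q$ one has $x_{n+q}=x_n$ directly), or from (iv) together with $TX_{p/q}=X_{p/q}$ and $\gcd(p,q)=1$. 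Second, your first-letter invariant depends on the closure rules exactly as written in the definition of $\mathcal{P}$ in \S2; the maps $\Gamma(u,v)=(u,uv)$ and $\Delta(u,v)=(vu,v)$ that the paper actually iterates in \S\S4--5 preserve \emph{last} letters instead (then every standard $u$ ends in $0$ and every standard $v$ ends in $1$), and with that convention your periodicity argument still goes through by comparing positions $|u|$ and $|u|+q=2|u|+|v|$. So the approach is robust, but you should fix one convention for the generating rules and state the invariant accordingly.
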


\begin{proof}
Let us write $p/q=[a_1,\ldots,a_n]$ with $a_n>1$. If $n=1$ then define $(u,v):=\Gamma^{a_n-2}(0,1)$. For odd $n>1$, define
\[(u,v):=(\Gamma^{a_n-1} \circ \Delta^{a_{n-1}} \circ \cdots \circ \Delta^{a_2}\circ \Gamma^{a_1-1})((0,1)),\]
and for even $n$ define
\[(u,v):=(\Delta^{a_n-1} \circ \Gamma^{a_{n-1}} \circ \cdots \circ \Delta^{a_2}\circ \Gamma^{a_1-1})((0,1)).\]
A proof by induction on $n$ similar to that in the previous proposition shows that $|uv|_1=p$ and $|uv|=q$, and hence there exists a standard pair $(u,v)$ such that $\varsigma(uv)=p/q$.

For the rest of the proof we let $(u,v)$ be such a standard pair. Since $(u,v)$ is standard, it follows by definition that the pairs $(\Delta^n\circ\Gamma)(u,v)=((uv)^nu,uv)$ and $(\Gamma^n\circ \Delta)(u,v)=(vu,(vu)^nv)$ are standard pairs for all $n \geq 0$. In particular, the words $(uv)^nu$ and $(vu)^nv$ are standard for all $n \geq 0$, and hence these words are cyclically balanced. It is easy to see that the set of all standard pairs $(a,b)$ such that $|a|\cdot|b|_1-|a|_1\cdot|b|=1$ contains the pair $(0,1)$ and is closed under the action of $\Gamma$ and $\Delta$, and it follows that every standard pair has this property.
In particular we have $|u|\cdot|v|_1-|u|_1\cdot|v|=1$ as claimed. Furthermore, since $(u,uv)$ is a standard pair we have $|u|(|uv|_1)-|u|_1(|uv|)=1$ so that $|uv|$ is coprime to $|uv|_1$, and so any standard pair $(u,v)$ which satisfies $\varsigma(uv)=p/q$ necessarily has $|uv|_1=p$, $|uv|=q$ as claimed.

 Finally, an easy inductive proof starting with the pair $(0,1)$ shows that for every standard pair $(a,b)$ there is a (possibly empty) finite word $p$ such that $ab=p01$ and $ba=p10$, see \cite[p.~57]{Lot}. Since $(u,v)$ is a standard pair we have $uv^2u=p01p10$ for some finite word $p$, and hence in particular $u^2v^2$ is cyclically equivalent to a word of the form $0p01p1$. Since $|1p1|_1 = 2+|0p0|_1$ the word $0p01p1$ is not balanced, and hence $u^2v^2$ is not cyclically balanced.
\end{proof}

\begin{remark}It is worth noting that if we put $p_1=|u|_1, q_1=|u|$ and $p_2=|v|_1, q_2=|v|$, then $p_1/q_1$ and $p_2/q_2$ are the Farey parents of $p/q$: that is, they are the unique fractions such that $p_1/q_1 < p/q< p_2/q_2$, $0<q_1,q_2<q$, $q_1+q_2=q$ and $p_1+p_2=p$. In fact one may show that the pair $(u,v)$ specified by Proposition~\ref{sw2} is unique, but this is not required for our argument.
\end{remark}

\begin{example}Let $p/q=3/7$; then $u=00101, v=01$. In particular we have $u^2v^2=00101001010101$, which contains the subwords $0010100$ and $1010101$ and thus is not balanced (let alone cyclically balanced).
\end{example}

\section{Preimages of rational points}\label{ratchap}

In this section we give the proofs of the various clauses of part~(ii) of Theorem~\ref{main}. Since $S \colon [0,1] \to \mathbb{R}$ is concave it follows from elementary convex analysis that the left derivative $S'_\ell(\gamma)$ and the right derivative $S_r'(\gamma)$ both exist and are finite for every $\gamma \in (0,1)$, the right derivative $S_r'(0)$ at $0$ either exists or equals $+\infty$, and the left derivative $S_\ell'(1)$ at $1$ either exists or equals $-\infty$. Furthermore, the set of all subderivatives of $S$ at $\gamma \in (0,1)$ is precisely $[S'_\ell(\gamma),S_r'(\gamma)]$, the set of subderivatives of $S$ at $0$ is precisely $(-\infty,S'_r(0)]$, and the set of subderivatives of $S$ at $1$ is precisely $[S_\ell'(1),+\infty)$. Note that the latter two intervals are empty if the respective right or left derivative is infinite. For proofs of these statements in a general context we direct the reader to \cite{Rock}.

In Proposition~\ref{sfunction} we showed that $\alpha \in (0,\infty)$ belongs to $\mathfrak{r}^{-1}(\gamma)$ if and only if $-\log\alpha$ is a subgradient of $S$ at $\gamma$. Since $\mathfrak{r}$ is nondecreasing and $\mathfrak{r}(0)=0$, it follows from this result together with the preceding analysis that
\[\mathfrak{r}^{-1}(0) =\left[0,e^{-S_r'(0)}\right],\]
\[\mathfrak{r}^{-1}(1) =\left[e^{-S_\ell'(1)},+\infty\right),\]
and for each $\gamma \in (0,1)$,
\[\mathfrak{r}^{-1}(\gamma) =\left[e^{-S_\ell'(\gamma)},e^{-S_r'(\gamma)}\right],\]
where $e^{-\infty}$ is understood as zero and $e^{+\infty}$ is understood as $+\infty$. Our task in this section, then, is to compute these left and right derivatives explicitly in the case of rational $\gamma$ (showing in the process that they are finite at $0$ and $1$ if and only if the appropriate matrix is diagonalisable) and then show that for $\gamma \in(0,1) \cap\mathbb{Q}$ the left and right derivatives of $S$ at $\gamma$ cannot be equal to one another.

To begin the proof we treat those statements concerned with $\mathfrak{r}^{-1}(0)$ and $\mathfrak{r}^{-1}(1)$. Let us suppose first that $A_0$ is diagonalisable. Since $A_0$ is non-negative it has an eigenvalue equal to its spectral radius (see e.g. \cite[Theorem~8.3.1]{HJ}), and since by Definition~\ref{leppard} its trace is positive, the remaining eigenvalue lies in the interval $(-\rho(A_0),\rho(A_0)]$. It follows easily that the limit $P_0:=\lim_{n \to \infty}\rho(A_0)^{-n}A_0^n$ exists. Using Proposition~\ref{sfunction}(iii) together with the fact that the word $0^n1$ is cyclically balanced, we may now calculate the right derivative of $S$ at $0$ as:
\begin{align*}
S_r'(0)&=\lim_{n \to \infty}\frac{S(1/(n+1))-S(0)}{1/(n+1)}\\
&=\lim_{n \to \infty} (n+1) \left(\frac{1}{n+1}\log \rho(A_0^nA_1)-\log\rho(A_0)\right)\\
&=\lim_{n \to \infty} \log\rho(A_0^nA_1) - (n+1)\log\rho(A_0)\\
&=\lim_{n \to \infty} \log\rho\left(\frac{1}{\rho(A_0)^{n+1}}A_0^nA_1\right)= \log\left(\frac{\rho(P_0A_1)}{\rho(A_0)}\right).
\end{align*}
It follows that $\mathfrak{r}^{-1}(0) = [0,\rho(A_0)/\rho(P_0A_1)]$ as claimed in the statement of Theorem~\ref{main}(ii). Let us now suppose instead that $A_0$ is not diagonalisable. In this case $A_0$ has a repeated eigenvalue equal to its spectral radius and has nontrivial Jordan form. It follows that $\lim_{n \to \infty} \|\rho(A_0)^{-n}A_0^n\| = +\infty$. Let $\delta>0$ be the smallest entry of the matrix $A_0A_1$, which is positive by Definition~\ref{leppard}(ii), and for each $n \geq 2$ let $m_n$ be the largest entry of the non-negative matrix $\rho(A_0)^{-n-1}A_0^{n-1}$. Clearly we have $\lim_{n \to \infty} m_n = +\infty$. Since $A_0^{n-1}$ and $A_0A_1$ are both non-negative matrices it follows easily that
\[2\rho\left(\rho(A_0)^{-n-1}A_0^nA_1\right) \geq \tr \left(\rho(A_0)^{-n-1}A_0^nA_1\right)  \geq \delta m_n\]
for each $n \geq 2$, and hence
\[S_r'(0)=\lim_{n \to \infty}\frac{S(1/(n+1))-S(0)}{1/(n+1)}=\lim_{n \to \infty} \log\rho\left(\frac{1}{\rho(A_0)^{n+1}}A_0^nA_1\right)=+\infty.\]
In this case we therefore have $\mathfrak{r}^{-1}(0)=\{0\}$ as claimed. The proof of the statement concerning $\mathfrak{r}^{-1}(1)$ and the matrix $A_1$ is almost identical, and we omit it for the sake of brevity.

Let us now move on to the case of $\gamma \in (0,1) \cap \mathbb{Q}$. Fix $p/q \in (0,1)$ for the remainder of the proof, and let $(u,v)$ be a standard pair such that $\varsigma(uv)=p/q$, which exists by Proposition~\ref{sw2}. Define $B_1:=\mathcal{A}(u)$, $B_2:=\mathcal{A}(v)$, and $A:=B_1B_2$. By Definition~\ref{leppard}(ii) the matrix $A$ is positive, and hence by the Perron-Frobenius theorem $A$ has two distinct eigenvalues. It follows that the limit
\[
P:= \lim_{n \to \infty} \rho(A)^{-n}A^n
\]
exists, and is the matrix corresponding to the unique projection whose image is the leading eigenspace of $A$ and whose kernel is the non-leading eigenspace of $A$. In particular, $P$ is of rank one.

By Proposition~\ref{sw2} the word $uvuv$ is cyclically balanced and the word $u^2v^2$ is not: since both words have slope $p/q$ and length $2q$, it follows from Proposition~\ref{sfunction}(iii) that
\begin{equation}
\label{xxx}
\rho(B_1B_2)^2= \rho(\mathcal{A}(uvuv))= e^{2qS(p/q)}>\rho(\mathcal{A}(u^2v^2))= \rho(B_1^2B_2^2).
\end{equation}
Using Proposition~\ref{sfunction}(ii) we in particular have $\varrho(\alpha)=e^{S(p/q)}\alpha^{p/q}=\rho(\mathcal{A}_\alpha(uv))^{1/q}$ for every $\alpha \in \mathfrak{r}^{-1}(p/q)$ as claimed in Theorem~\ref{main}(ii). Let $p_1:=|u|_1$, $p_2:=|v|_1$, $q_1:=|u|$, $q_2:=|v|$. Using Proposition~\ref{sfunction}(iii) together with the fact that the words $(uv)^nv$ and $u(uv)^n$ are cyclically balanced, we have for each $n \geq 1$,
\begin{align*}
S\left(\frac{np+p_1}{nq+q_1}\right) &= \frac{1}{nq+q_1}\log\rho(\mathcal{A}(u(uv)^n))= \frac{1}{nq+q_1}\log\rho(B_1A^n),\\
S\left(\frac{np+p_2}{nq+q_2}\right) &= \frac{1}{nq+q_2}\log\rho(\mathcal{A}((uv)^nv))= \frac{1}{nq+q_2}\log\rho(A^nB_2).
\end{align*}
Since $|u|\cdot|v|_1 - |u|_1\cdot|v|=1$ we have $\varsigma(u)<\varsigma(v)$, and therefore $\varsigma((uv)^nu)<\varsigma(uv)<\varsigma((vu)^nv)$ for all $n \geq 0$. We may therefore compute the left derivative of $S$ at $p/q$ as
\begin{align*}
S_\ell'\left(\frac pq\right) &= \lim_{n \to \infty}\frac{S(\frac{p}{q})-S\left(\frac{np+p_1}{nq+q_1}\right)} {\frac{p}{q}-\frac{np+p_1}{nq+q_1}} =\lim_{n \to \infty}\frac{\frac{1}{q}\log\rho(A)-\frac{1}{nq+q_1} \log\rho(B_1A^n)}{\frac{p}{q}-\frac{np+p_1}{nq+q_1}}\\
&= \lim_{n \to \infty}\frac{(nq+q_1)\log \rho(A) - q\log \rho(B_1A^n)}{pq_1-p_1q}\\
&=-\log \left(\frac{\rho(B_1P)^q}{\rho(A)^{q_1}}\right),
\end{align*}
where we have used the identity
\[pq_1-p_1q =(|u|_1+|v|_1)|u| - |u|_1(|u|+|v|)=|u|\cdot|v|_1 - |u|_1\cdot|v|=1.\]
A similar calculation for the right derivative yields
\[
S_r'\left(\frac pq\right) =\log \left(\frac{\rho(PB_2)^q}{\rho(A)^{q_2}}\right).
\]
Combining this with the observations at the start of this section we obtain the explicit formula
\begin{equation}\label{eq:rat-preimage}
\mathfrak r^{-1}\left(\frac pq\right)=\left[e^{-S_\ell'\left(\frac pq\right)},e^{-S_r'\left(\frac pq\right)}\right]=\left[\frac {\rho(B_1P)^q}{\rho(A)^{q_1}}, \frac{\rho(A)^{q_2}}{\rho(PB_2)^q}\right]
\end{equation}
asserted by Theorem~\ref{main}.

\begin{example} Let $A_0, A_1$ be as in (\ref{hmstmat}) and let $p/q=1/2$; here $u=0, v=1$, whence $p_1=0, q_1=1, p_2=1, q_2=1$. Therefore,
\[
B_1=\begin{pmatrix} 1 & 1 \\ 0 & 1\end{pmatrix}, B_2=\begin{pmatrix} 1 & 0 \\ 1 & 1\end{pmatrix},
A=\begin{pmatrix} 2 & 1 \\ 1 & 1\end{pmatrix}, P=\begin{pmatrix} \frac{5+\sqrt5}{10} & \frac{\sqrt5}5 \\ \frac{\sqrt5}5 & \frac{5-\sqrt5}{10}\end{pmatrix},
\]
whence by (\ref{eq:rat-preimage}), $\mathfrak r^{-1}(1/2)=[4/5, 5/4]$. This result was previously derived in \cite{Theys} by a different, geometric method. (See also Table~\ref{tablemain} and Figure~\ref{fig:frakr(gamma)}.)
\end{example}

To finish the proof of Theorem~\ref{main}(ii) we need to show that the interior of $\mathfrak r^{-1}(p/q)$ is nonempty for any $p,q$. Suppose it is empty; then
\begin{equation}\label{contro}
\rho(B_1P)^q\rho(PB_2)^q = \rho(A)^{q_1+q_2}=\rho(A)^q.
\end{equation}
For the remainder of the proof we shall assume that this relation holds, and thereby derive a contradiction.

So, suppose $\rho(B_1P)\rho(PB_2)=\rho(A)$. It follows from the definition of $P$ that $\rho(A)=\rho(PAP)$. Since the spectral radius of a product of matrices is invariant under cyclic permutations of that product, we have $\rho(PB_1)=\rho(B_1P)$ and $\rho(PB_2)=\rho(B_2P)$, and it follows that $\rho(PB_1)\rho(B_2P)=\rho(A)=\rho(PAP)$. Since $A$ is invertible it  has nonzero spectral radius, and therefore $\rho(PB_1)$ and $\rho(B_2P)$ are positive. It follows from the definition of $P$ that $P$ is a non-negative matrix, and hence $PB_1$, $B_2P$ and $PAP$ are all non-negative. In particular, each of these three matrices has an eigenvalue equal to its spectral radius. On the other hand since $P$ has rank one, each of the matrices $PB_1$, $B_2P$ and $PAP$ has determinant zero, and hence has one eigenvalue equal to zero. We conclude that $(\tr PB_1)(\tr B_2P)=\tr PAP$.

Since the two-dimensional matrix $PB_1$ has one positive eigenvalue and one eigenvalue equal to zero, it is diagonalizable. Define $\lambda_1:=\rho(PB_1)$ and $\lambda_2:=\rho(B_2P)$, and choose an invertible matrix $U$ such that
\[
UPB_1U^{-1} = \begin{pmatrix} \lambda_1&0\\0&0\end{pmatrix},\qquad UB_2PU^{-1} = \begin{pmatrix}a&b\\c&d\end{pmatrix}
\]
where $a,b,c$ and $d$ are real numbers. We have $\lambda_2= \tr B_2P = \tr UB_2PU^{-1} = a+d$ and $\lambda_1\lambda_2= (\tr PB_1)(\tr B_2P)=\tr PAP = \tr PB_1B_2P = \tr UPB_1U^{-1}U B_2PU^{-1}= \lambda_1a$. It follows that $d=0$ and $a=\lambda_2$. On the other hand, since $\det P=0$ we have $\det UB_2PU^{-1}=0$ and therefore $ad-bc=0$. We deduce that $bc=0$, and therefore at least one of $b$ and $c$ is zero.

We claim that there exists a nonzero vector $\omega \in \mathbb{R}^2$ which is an eigenvector of both $B_1$ and $B_2$. We consider separately the case $b=0$ and the case $c=0$.

If $b=0$, put $\omega = U^{-1}(0,1)^T$, and note that
\[
PB_1=U^{-1}\begin{pmatrix} \lambda_1 & 0 \\ 0 & 0\end{pmatrix}U,
\]
whence
\[
PB_1\omega = U^{-1}\begin{pmatrix} \lambda_1 & 0 \\ 0 & 0\end{pmatrix}\begin{pmatrix} 0 \\ 1 \end{pmatrix}=\begin{pmatrix} 0 \\ 0 \end{pmatrix},
\]
and
\[
B_2P\omega=U^{-1}\begin{pmatrix} \lambda_2 & 0 \\c & 0\end{pmatrix}\begin{pmatrix} 0 \\ 1 \end{pmatrix}=\begin{pmatrix} 0 \\ 0 \end{pmatrix},
\]
i.e., $PB_1\omega = B_2P\omega=0$.
We claim that $\omega$ is an eigenvector of $B_1$ and of $B_2$. Since $B_2$ is invertible we deduce from the relation $B_2P\omega=0$ that $\omega$ belongs to the kernel of $P$. Since $B_1$ is invertible and $PB_1=0$, we deduce that $B_1\omega$ also
lies in the kernel of $P$, and therefore $\omega$ is an eigenvector of $B_1$. Finally, since by definition the kernel of $P$ is one of the eigenspaces of $A=B_1B_2$, the vector $\omega$ is also an eigenvector of $B_1B_2$, and since $B_1^{-1}\omega$ is proportional to $\omega$ we conclude that $B_2\omega$ is proportional to $\omega$ as required.

The case $c=0$ is similar. Define $\omega := U^{-1}(1,0)^T$ so that $PB_1\omega =\lambda_1\omega$ and $B_2P\omega=\lambda_2\omega$. Since $\omega =\lambda_1^{-1}PB_1\omega$ the vector $\omega$ belongs to the image of $P$, and is therefore fixed by $P$ since $P$ is a projection. It follows from this and the relation $B_2P\omega = \lambda_2\omega$ that
$B_2\omega=\lambda_2\omega$, so that $\omega$ is an eigenvector of $B_2$. On the other hand, the image of $P$ is an eigenspace of $A$ and therefore $\omega$ is an eigenvector of $A=B_1B_2$. Since $B_2\omega=\lambda_2\omega$ we deduce from this that $\omega$ is also an eigenvector of $B_1$ as required. This proves the claim.

We may now derive the desired contradiction. Let $\omega$ be a common eigenvector of the matrices $B_1$ and $B_2$, and let us write $B_i\omega = \xi_i\omega$ for $i=1,2$. We have $B_1^2B_2^2 \omega = \xi_1^2\xi_2^2\omega = (B_1B_2)^2\omega$ so that $\xi_1^2\xi_2^2$ is an eigenvalue of both $B_1^2B_2^2$ and $(B_1B_2)^2$. It follows immediately that $\rho(B_1^2B_2^2)=\max\{|\xi_1^2\xi_2^2|,|(\det B_1B_2)^2\xi_1^{-2}\xi_2^{-2}|\}=\rho(B_1B_2)^2$, contradicting \eqref{xxx}. We conclude that the relation \eqref{contro} cannot hold, and the proof of Theorem~\ref{main}(ii) is complete.

\begin{corollary}\label{strictly-concave}
The function $S:[0,1]\to\mathbb R$ is strictly concave.
\end{corollary}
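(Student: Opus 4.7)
The plan is to argue by contradiction via the correspondence between subgradients of $S$ and preimages of $\mathfrak{r}$ that was already exploited throughout Section~\ref{ratchap}.

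First, suppose $S$ fails to be strictly concave. Since $S$ is already known to be concave on $[0,1]$ by Proposition~\ref{sfunction}, the failure of strict concavity means there exist $\gamma_1,\gamma_2\in[0,1]$ with $\gamma_1<\gamma_2$ and some $t\in(0,1)$ such that $S(t\gamma_1+(1-t)\gamma_2)=tS(\gamma_1)+(1-t)S(\gamma_2)$. A standard lemma in convex analysis then forces $S$ to be affine on the whole interval $[\gamma_1,\gamma_2]$.

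Next, choose any rational number $p/q\in(\gamma_1,\gamma_2)\cap(0,1)$, which exists by density since $\gamma_1<\gamma_2$ and $[\gamma_1,\gamma_2]\subseteq[0,1]$. Because $S$ is affine on an open neighbourhood of $p/q$, the one-sided derivatives coincide, $S_\ell'(p/q)=S_r'(p/q)$. By the characterisation of $\mathfrak{r}^{-1}(p/q)$ as $[e^{-S_\ell'(p/q)},e^{-S_r'(p/q)}]$ established at the start of Section~\ref{ratchap}, this forces $\mathfrak{r}^{-1}(p/q)$ to be a singleton.

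Finally, this contradicts Theorem~\ref{main}(ii), which asserts that $\mathfrak{r}^{-1}(p/q)$ has nonempty interior for every rational $p/q\in(0,1)$. Hence no such $\gamma_1<\gamma_2$ exist and $S$ is strictly concave. There is no real obstacle in this argument; the corollary is essentially a repackaging of the nondegeneracy of the intervals $\mathfrak{r}^{-1}(p/q)$ through the subgradient correspondence, together with the density of rationals in $(0,1)$.
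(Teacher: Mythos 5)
Your proof is correct and follows essentially the same route as the paper: the paper also argues that a failure of strict concavity would make $S$ affine (hence differentiable) on an interval, contradicting the fact that $S_\ell'(p/q)\neq S_r'(p/q)$ at rational points, which is exactly the nondegeneracy of the intervals $\mathfrak{r}^{-1}(p/q)$ from Theorem~\ref{main}(ii) that you invoke. Your version merely spells out the subgradient correspondence explicitly, which the paper leaves implicit.
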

\begin{proof}We know that $S$ is concave; if it were not strictly concave, there would be an interval $J=(\gamma_1, \gamma_2)$ such that $S|_J$ would be affine, i.e., $S'|_J$ would exist (and be constant). This contradicts the fact that $S'(\gamma)$ does not exist for any rational $\gamma$.
\end{proof}

\begin{remark}
One can show that for the matrices $B_1$ and $B_2$ we have
\[
\rho(B_1B_2)>\rho(B_1)\rho(B_2),
\]
which is essentially equivalent to Corollary~\ref{strictly-concave}. We leave this as an exercise for the interested reader.

\end{remark}

\section{Preimages of irrational points: preliminaries}

In this section we apply a dynamical estimate to prove an inequality dealing with the subgradients of the function $S$. We begin with the following lemma, which allows us to choose a family of norms on $\mathbb{R}^2$ which is adapted to the study of the family $\mathsf{A}_\alpha$. The proof is identical to that of \cite[Lemma~3.3]{HMST}. Note that the proof requires that $A_0$ and $A_1$ do not have a common invariant subspace, as was stipulated in Definition~\ref{leppard}(i).
\begin{lemma}\label{nf}
There exists a family of norms $\{\|\cdot\|_\alpha \colon \alpha \in (0,\infty)\}$ on $\mathbb{R}^2$ with the following properties. For every $v \in \mathbb{R}^2$, $\alpha \in (0,\infty)$ and $i=0,1$ we have $\|A^{(\alpha)}_iv\|_\alpha \leq \varrho(\alpha)\|v\|_\alpha$. If $K\subset (0,\infty)$ is compact, then there is a constant $C>1$ depending on $K$ such that $\|v\| \leq C \|v\|_\alpha \leq C^2\|v\|$ for all $v \in \mathbb{R}^2$ and $\alpha \in K$.
\end{lemma}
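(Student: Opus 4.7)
The plan is to construct the norms via the standard Rota--Strang extremal norm recipe: for each $\alpha>0$, set
\[
\|v\|_\alpha := \sup_{n\geq 0}\ \sup_{u\in\{0,1\}^n}\ \varrho(\alpha)^{-n}\,\bigl\|\mathcal{A}_\alpha(u)\,v\bigr\|,
\]
with the convention that the empty word contributes the term $\|v\|$. Homogeneity and the triangle inequality are immediate from this definition, and the extremal property $\|A_i^{(\alpha)} v\|_\alpha \leq \varrho(\alpha)\|v\|_\alpha$ is a direct reindexing argument: prepending the symbol $i$ to a word $u$ of length $n$ produces a word of length $n+1$ which is again in the range of the outer supremum. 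Taking only the term $n=0$ gives $\|v\|_\alpha \geq \|v\|$, so the lower inequality $\|v\|\leq C\|v\|_\alpha$ holds with $C=1$ regardless of $\alpha$.

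The first serious point is finiteness of $\|v\|_\alpha$. The set $V_\alpha := \{v\in\mathbb{R}^2 : \|v\|_\alpha<\infty\}$ is clearly a linear subspace and, by the extremal inequality just proved, is invariant under both $A_0^{(\alpha)}$ and $A_1^{(\alpha)}$. Definition~\ref{leppard}(i) then forces $V_\alpha \in \{\{0\},\mathbb{R}^2\}$. To rule out $V_\alpha = \{0\}$, I would invoke the Rota--Strang theorem (in the refined form of Barabanov--Wirth): irreducibility in finite dimension is equivalent to the existence of a finite constant $M(\alpha)$ with $\|\mathcal{A}_\alpha(u)\|\leq M(\alpha)\varrho(\alpha)^{|u|}$ for every finite word $u$. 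Consequently $\|v\|_\alpha \leq M(\alpha)\|v\|$ for all $v$, so $V_\alpha=\mathbb{R}^2$ and $\|\cdot\|_\alpha$ is a bona fide norm.

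The main obstacle is upgrading the pointwise bound $M(\alpha)<\infty$ to a uniform bound on a compact set $K\subset(0,\infty)$, which is what the statement requires. My approach is to argue by contradiction: if $M(\alpha)$ were unbounded on $K$, one could choose $\alpha_k\in K$ with $\alpha_k\to\alpha_*\in K$ and finite words $u^{(k)}$ with $\varrho(\alpha_k)^{-|u^{(k)}|}\|\mathcal{A}_{\alpha_k}(u^{(k)})\|\to\infty$. Since $\alpha\mapsto A_i^{(\alpha)}$ and $\alpha\mapsto\varrho(\alpha)$ are continuous, after passing to a subsequence one transfers the divergence to products of the limiting family $\mathsf{A}_{\alpha_*}$, contradicting $M(\alpha_*)<\infty$. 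A cleaner packaging of this contradiction argument uses the continuous (in Hausdorff distance) dependence of the extremal-norm unit balls on the generating family, together with the Banach--Alaoglu / Blaschke selection principle for symmetric convex bodies in $\mathbb{R}^2$; this yields a single constant $C=C_K$ valid for all $\alpha\in K$, completing the lemma. This compactness/continuity step is the technical heart of the argument, while everything else is essentially bookkeeping around the definition of $\|\cdot\|_\alpha$.
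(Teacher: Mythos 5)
Your construction of $\|\cdot\|_\alpha$ as $\sup_{u}\varrho(\alpha)^{-|u|}\|\mathcal{A}_\alpha(u)v\|$, the one-step extremality obtained by prepending a letter, the lower bound $\|v\|\leq\|v\|_\alpha$ from the empty word, and the reduction of finiteness at each \emph{fixed} $\alpha$ to irreducibility of $\mathsf{A}_\alpha$ are all correct, and this is the same extremal-norm route as the argument the paper relies on (the paper gives no independent proof; it defers to \cite[Lemma~3.3]{HMST}). One small inaccuracy: irreducibility is sufficient, not equivalent, to boundedness of the normalized products (a commuting diagonalisable pair is reducible yet has bounded normalized products); only the implication you actually use is true.

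The genuine gap is the uniformity over a compact set $K$, which is the real content of the lemma and which your sketch does not establish. The proposed contradiction — ``transfer the divergence to the limiting family'' — fails: if $M(\alpha_k)\to\infty$ with $\alpha_k\to\alpha_*$, the witnessing words $u^{(k)}$ have lengths $n_k$ over which you have no control, while continuity of $\alpha\mapsto A^{(\alpha)}_i$ and of $\varrho$ gives only per-factor estimates whose errors compound over $n_k$ factors. Concretely, since $\mathcal{A}_\alpha(u)=\alpha^{|u|_1}\mathcal{A}(u)$,
\[
\frac{\varrho(\alpha_k)^{-n_k}\left\|\mathcal{A}_{\alpha_k}(u^{(k)})\right\|}{\varrho(\alpha_*)^{-n_k}\left\|\mathcal{A}_{\alpha_*}(u^{(k)})\right\|}=\left(\frac{\varrho(\alpha_*)}{\varrho(\alpha_k)}\right)^{n_k}\left(\frac{\alpha_k}{\alpha_*}\right)^{|u^{(k)}|_1},
\]
and $n_k$, $|u^{(k)}|_1$ may grow arbitrarily fast relative to $|\alpha_k-\alpha_*|^{-1}$, so this ratio is completely uncontrolled; divergence of the normalized products at $\alpha_k$ therefore says nothing about products at $\alpha_*$, and no contradiction with $M(\alpha_*)<\infty$ is obtained. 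The ``cleaner packaging'' via Hausdorff continuity of extremal unit balls begs the question: extremal norms are not unique, and such continuity or uniform-eccentricity statements for families of matrix sets are precisely the nontrivial content at stake, not an off-the-shelf principle. To close the gap you should either invoke the known uniform bound on the eccentricity of extremal norms over compact sets of irreducible families (which is in effect what the paper does through \cite[Lemma~3.3]{HMST}), or run a genuine limiting argument on the norms themselves: normalize $\|\cdot\|_{\alpha_k}$ by its eccentricity $E_k\to\infty$, use the resulting $1$-Lipschitz bounds to extract a limit seminorm, pass the one-step inequality to the limit (this uses continuity of $A^{(\alpha)}_i$ and $\varrho$ only once per factor, which is legitimate), show separately that $\min_{\|v\|=1}\|v\|_{\alpha}$ is bounded by an absolute constant so that the limit seminorm vanishes at some unit vector, and conclude that its kernel is a common invariant subspace of $\mathsf{A}_{\alpha_*}$, contradicting Definition~\ref{leppard}(i). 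That limiting argument (or the citation) is the missing technical heart; it is not bookkeeping.
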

For the remainder of this section we fix a family of norms $\|\cdot\|_\alpha$ with the above properties. We also make use of the following elementary result:
\begin{lemma}\label{sv}
Let $B$ be a $2 \times 2$ invertible real matrix, and let $\|\cdot\|$ be the Euclidean norm. Then there exists a rank one matrix $Q$ such that $\|B-Q\| = |\det B| / \|B\|$.
\end{lemma}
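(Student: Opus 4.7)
The plan is to invoke the singular value decomposition of $B$ and read off the best rank-one approximation directly. Write $B = U \Sigma V^T$ where $U, V$ are orthogonal $2 \times 2$ matrices and $\Sigma = \mathrm{diag}(\sigma_1, \sigma_2)$ with $\sigma_1 \geq \sigma_2 \geq 0$. Since $B$ is invertible, both $\sigma_i$ are nonzero. With respect to the Euclidean norm we have the standard identities $\|B\| = \sigma_1$ and $|\det B| = \sigma_1 \sigma_2$.

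Next, I would define $Q := U \,\mathrm{diag}(\sigma_1, 0)\, V^T$. Because $\mathrm{diag}(\sigma_1, 0)$ has rank one and $U,V$ are invertible, $Q$ has rank one. Then
\[
B - Q \;=\; U \,\mathrm{diag}(0, \sigma_2)\, V^T,
\]
and since multiplication by orthogonal matrices preserves the operator norm, $\|B - Q\| = \sigma_2 = \sigma_1 \sigma_2 / \sigma_1 = |\det B| / \|B\|$, as desired.

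There is no real obstacle here; the content of the lemma is essentially the $2 \times 2$ case of the Eckart--Young theorem. The only thing worth spelling out carefully is the justification that $\|U M V^T\| = \|M\|$ for orthogonal $U, V$, which follows because the Euclidean operator norm is the largest singular value and orthogonal transformations preserve singular values (equivalently, $\|U x\| = \|x\|$ for all $x$). If one wished to avoid invoking the SVD explicitly, one could instead diagonalise $B^T B$ directly: pick a unit eigenvector $v_2$ for its smaller eigenvalue $\sigma_2^2$, set $v_1 \perp v_2$, let $u_i := B v_i / \|B v_i\|$, and define $Q := \sigma_1 u_1 v_1^T$; the same computation then gives $\|B - Q\| = \sigma_2 = |\det B|/\|B\|$.
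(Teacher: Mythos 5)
Your proposal is correct and follows essentially the same route as the paper: both arguments take a singular value decomposition of $B$, note that $\|B\|$ and $|\det B|/\|B\|$ are the two singular values, and define $Q$ by deleting the smaller singular value, using orthogonal invariance of the Euclidean operator norm to conclude. No gaps.
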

\begin{proof}
We apply the existence of a singular value decomposition for $B$. Let us choose unitary matrices $U,V$ and a non-negative diagonal matrix $D$ such that $B=UDV^T$. Since $U$ and $V$ are isometries with respect to the Euclidean norm we have $\|D\|=\|B\|$, and since $|\det U|=|\det V|=1$ we have $|\det D|=|\det B|$. It follows that the nonzero entries of $D$ are $\|B\|$ and $|\det B| / \|B\|$. If $P$ is a matrix which has the entry $\|B\|$ in the same position as for $D$, with all of its other entries being zero, then clearly $P$ has rank one and $\|D-P\|=|\det B|/\|B\|$. Now let $Q:=UPV^T$.
\end{proof}
We now require a dynamical result which describes the dependence of the eigenvectors of certain products $\mathcal{A}_\alpha(u)$ on the structure of the words $u$. The following result is similar in spirit to \cite[Theorem~2.2]{QBWF}, but has the additional property that the modulus of continuity of the vector-valued function depends on $\alpha$ in a controllable manner. The restriction to rational $\gamma$ serves only to simplify the proof: by working instead with two-sided Sturmian sequences indexed over $\mathbb{Z}$, this condition could be removed. The rational case being sufficient for our argument, we ignore the more general statement.
\begin{proposition}\label{invtgraph}
Let $L \subset (0,1)$ be compact, and let $\gamma =\mathfrak{r}(\alpha)\in L \cap \mathbb{Q}$. Then there exist constants $\theta \in (0,1)$ and $K>1$ depending only on $L$ and a function $\mathfrak{v} \colon X_\gamma \to \mathbb{R}^2$ such that the following properties hold. For each $x \in X_\gamma$, $\mathfrak{v}(x)$ is non-negative and satisfies $\|\mathfrak{v}(x)\|_\alpha=1$ and $\mathcal{A}_\alpha(x,n)\mathfrak{v}(x)=\varrho(\alpha)^n\mathfrak{v}(T^nx)$ for all $n \geq 1$. If $x,y \in X_\gamma$ with $d(x,y) \leq 2^{-k}$, then $\|\mathfrak{v}(T^kx)-\mathfrak{v}(T^ky)\|_\alpha \leq K \theta^k$.
\end{proposition}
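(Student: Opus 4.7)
The plan is to take $\mathfrak v(x)$ to be the normalised Perron eigenvector of the one-period transfer matrix, to read the equivariance off a short cyclic computation, and then to establish the H\"older bound via a uniform Birkhoff--Hilbert cone contraction. The main obstacle will be to make the contraction rate and the relevant cone diameter uniform in $\alpha$ and in the denominator $q$ of $\gamma$.

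Since $\gamma=p/q\in\mathbb Q$, Theorem~\ref{Sturm}(iv) gives $|X_\gamma|=q$ and $T^qx=x$ for every $x\in X_\gamma$. The word $x_1\ldots x_q$ is cyclically balanced with slope $p/q\in(0,1)$, so it contains both symbols, and by Definition~\ref{leppard}(ii) the matrix $M_\alpha(x):=\mathcal A_\alpha(x,q)$ has strictly positive entries. By Perron--Frobenius it has a simple leading eigenvalue, which by Proposition~\ref{sfunction}(iii) equals $\varrho(\alpha)^q$, together with a unique positive eigenvector up to scaling; I would define $\mathfrak v(x)$ as that eigenvector normalised so $\|\mathfrak v(x)\|_\alpha=1$. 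For the equivariance, the identity $\mathcal A_\alpha(x,n+q)=M_\alpha(T^nx)\mathcal A_\alpha(x,n)=\mathcal A_\alpha(x,n)M_\alpha(x)$ (using $T^qx=x$) shows that $\mathcal A_\alpha(x,n)\mathfrak v(x)$ is a non-negative eigenvector of $M_\alpha(T^nx)$ at the Perron eigenvalue, hence equal to $c_n(x)\mathfrak v(T^nx)$ for some $c_n(x)>0$. Applying $\mathcal A_\alpha(T^nx,q-n)$ to both sides and using $M_\alpha(x)\mathfrak v(x)=\varrho(\alpha)^q\mathfrak v(x)$ yields $c_n(x)c_{q-n}(T^nx)=\varrho(\alpha)^q$, while Lemma~\ref{nf} forces $c_n(x)\le\varrho(\alpha)^n$ and $c_{q-n}(T^nx)\le\varrho(\alpha)^{q-n}$, so equality must hold; the statement for $n\ge q$ then follows by writing $n=mq+r$ and iterating.

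For the H\"older bound, compactness of $L\subset(0,1)$ combined with Theorem~\ref{main}(i) implies that $\mathfrak r^{-1}(L)$ is a compact subset of $(0,\infty)$. Balanced sequences of slope $\gamma\in L$ have runs of $0$s and $1$s bounded uniformly in $L$, so there exists an integer $N=N(L)$ such that every $N$-letter subword of every $x\in X_\gamma$ with $\gamma\in L\cap\mathbb Q$ contains both symbols; by Definition~\ref{leppard}(ii) each matrix $\mathcal A_\alpha(T^jx,N)$ is then strictly positive. As $(\alpha,u)$ ranges over $\mathfrak r^{-1}(L)$ and the finite collection of $N$-letter balanced words $u$, the matrices $\mathcal A_\alpha(u)$ form a compact family of strictly positive $2\times 2$ matrices, whose Birkhoff projective contraction coefficients are uniformly bounded above by some $\tau^*<1$ and whose images of the positive cone have Hilbert diameter uniformly bounded above by some $\Delta^*<\infty$, both depending only on $L$.

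Now assume $d(x,y)\le 2^{-k}$ with $k\ge N$, and write $k=sN+r$ with $0\le r<N$. Then $\mathcal A_\alpha(x,k)=\mathcal A_\alpha(y,k)$, and after the first $N$-block both $\mathcal A_\alpha(x,N)\mathfrak v(x)$ and $\mathcal A_\alpha(x,N)\mathfrak v(y)$ lie in a cone of Hilbert diameter at most $\Delta^*$; each of the next $s-1$ full $N$-blocks shrinks the Hilbert distance by at least a factor $\tau^*$, while the final $r$ non-negative factors are non-expansive. Combining this with the projective invariance of the Hilbert metric $d_H$ on the positive cone and the equivariance from the previous paragraph yields
\[
d_H\bigl(\mathfrak v(T^kx),\mathfrak v(T^ky)\bigr)\le\Delta^*(\tau^*)^{s-1}.
\]
Since both vectors are non-negative, have $\|\cdot\|_\alpha$-norm one, and lie in a uniformly bounded part of the interior of the positive cone, and since $\|\cdot\|_\alpha$ is uniformly comparable to the Euclidean norm on $\mathfrak r^{-1}(L)$ by Lemma~\ref{nf}, this Hilbert bound converts to $\|\mathfrak v(T^kx)-\mathfrak v(T^ky)\|_\alpha\le K\theta^k$ with $\theta:=(\tau^*)^{1/N}$ and $K$ depending only on $L$; the finitely many cases $k<N$ are absorbed into $K$ via the trivial bound $\le 2$. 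The main obstacle throughout is exactly this uniformity of $\tau^*$ and $\Delta^*$ in $\alpha$ and $q$, which is precisely why $L$ must be bounded away from both $0$ and $1$: at either endpoint balanced words admit arbitrarily long constant runs and the $N$-block products cease to be uniformly strictly positive.
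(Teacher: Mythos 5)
Your construction of $\mathfrak{v}$ and the proof of the equivariance $\mathcal{A}_\alpha(x,n)\mathfrak{v}(x)=\varrho(\alpha)^n\mathfrak{v}(T^nx)$ coincide, up to cosmetic reorganisation, with the paper's argument: both take the Perron vector of the positive period-$q$ product (whose leading eigenvalue is $\varrho(\alpha)^q$), identify $\mathcal{A}_\alpha(x,n)\mathfrak{v}(x)$ as a non-negative Perron eigenvector of $\mathcal{A}_\alpha(T^nx,q)$, and pin down the scalar using the submultiplicativity $\|A_i^{(\alpha)}v\|_\alpha\le\varrho(\alpha)\|v\|_\alpha$ from Lemma~\ref{nf}. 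Where you genuinely diverge is in the exponential continuity estimate. The paper never invokes projective contraction: it observes that on the compact set $\mathfrak{r}^{-1}(L)$ one has $\theta:=\sup\max\{|\det A_0|,|\det(\alpha A_1)|\}/\varrho(\alpha)^2<1$, so the normalised product $\varrho(\alpha)^{-k}\mathcal{A}_\alpha(x,k)$ has determinant-to-norm ratio $O(\theta^k)$ and is therefore (Lemma~\ref{sv}, via the singular value decomposition) within $O(\theta^k)$ of a rank-one matrix $Q$; since $\mathcal{A}_\alpha(x,k)=\mathcal{A}_\alpha(y,k)$, both $\mathfrak{v}(T^kx)$ and $\mathfrak{v}(T^ky)$ are forced to be nearly proportional, and non-negativity plus the unit normalisation force the proportionality constant to be close to $1$. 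You instead run a Birkhoff--Hilbert cone argument, which requires the extra combinatorial input that balanced words of slope in a compact $L\subset(0,1)$ have run lengths bounded by $N(L)$, plus a compactness argument giving a uniform contraction coefficient $\tau^*<1$ and image diameter $\Delta^*$ for all length-$N$ mixed blocks with $\alpha\in\mathfrak{r}^{-1}(L)$. Both routes are sound and both use the compactness of $\mathfrak{r}^{-1}(L)$ in an essential way; the paper's determinant-versus-norm trick is specific to $2\times2$ matrices but needs no positivity of long blocks and yields a more explicit $\theta$, while your cone argument is the more standard dynamical machinery and would generalise more readily beyond dimension two, at the price of the less explicit rate $\theta=(\tau^*)^{1/N}$. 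One small point to tighten: in converting the Hilbert bound into $\|\mathfrak{v}(T^kx)-\mathfrak{v}(T^ky)\|_\alpha\le K\theta^k$ you appeal to the vectors lying "in a uniformly bounded part of the interior of the positive cone", which is neither needed nor the cleanest justification; the standard estimate $\|u-v\|\le(e^{d_H(u,v)}-1)\,\|v\|$ for vectors of equal norm with respect to a monotone norm, combined with the uniform equivalence of $\|\cdot\|_\alpha$ with the Euclidean norm from Lemma~\ref{nf}, gives the conversion with constants depending only on $L$.
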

\begin{proof}
Let us write $\gamma=p/q$ in least terms. Consider any $x \in X_\gamma$: since $0<p/q<1$ the matrix $\mathcal{A}_\alpha(x,q)$ is a mixed product of $A_0$ and $A_1$, and so is positive by Definition~\ref{leppard}(ii). By the Perron-Frobenius theorem it follows that $\mathcal{A}_\alpha(x,q)$ has a unique positive eigenvector with associated eigenvalue equal to $\rho(\mathcal{A}_\alpha(x,q))=\lim_{k \to \infty} \|\mathcal{A}_\alpha(x,kq)\|^{1/k}=\varrho(\alpha)^q$. Let $\mathfrak{v}(x)$ be a positive vector belonging to this eigenspace such that $\|\mathfrak{v}(x)\|_\alpha=1$.

Let us fix $x \in X_\gamma$ and show that $\mathcal{A}_\alpha(x,n)\mathfrak{v}(x)=\varrho(\alpha)^n\mathfrak{v}(T^nx)$ for all $n \geq 1$. If $n \geq q$ then
\begin{align*}
\mathcal{A}_\alpha(x,n)\mathfrak{v}(x) &=\mathcal{A}_\alpha(T^qx,n-q)\mathcal{A}_\alpha(x,q)\mathfrak{v}(x)\\ &=\varrho(\alpha)^q\mathcal{A}_\alpha(T^qx,n-q)\mathfrak{v}(x)\\ &=\varrho(\alpha)^q\mathcal{A}_\alpha(x,n-q)\mathfrak{v}(x), \end{align*}
and by iterating this identity we may reduce to the case where $1 \leq n <q$. In this case we have
\begin{align*}\varrho(\alpha)^q = \|\mathcal{A}_\alpha(x,q)\mathfrak{v}(x)\|_\alpha &= \|\mathcal{A}_\alpha(T^nx,q-n)\mathcal{A}_\alpha(x,n)\mathfrak{v}(x)\|_\alpha \\
&\leq \|\mathcal{A}_\alpha(T^nx,q-n)\|_\alpha \|\mathcal{A}_\alpha(x,n)\mathfrak{v}(x)\|_\alpha\\
&\leq\varrho(\alpha)^{q-n}\|\mathcal{A}_\alpha(x,n)\mathfrak{v}(x)\|_\alpha \leq \varrho(\alpha)^q\end{align*}
and it follows that $\|\mathcal{A}_\alpha(x,n)\mathfrak{v}(x)\|_\alpha=\varrho(\alpha)^n$. Moreover we have
\begin{align*}
\mathcal{A}_\alpha(T^nx,q)\mathcal{A}_\alpha(x,n)\mathfrak{v}(x)
&=
\mathcal{A}_\alpha(T^qx,n)\mathcal{A}_\alpha(x,q)\mathfrak{v}(x)\\
&=\varrho(\alpha)^q\mathcal{A}_\alpha(T^qx,n)\mathfrak{v}(x)=
\varrho(\alpha)^q\mathcal{A}_\alpha(x,n)\mathfrak{v}(x).\end{align*}
Thus $\mathcal{A}_\alpha(x,n)\mathfrak{v}(x)$ is a positive eigenvector of $\mathcal{A}_\alpha(T^nx,q)$ with corresponding eigenvalue $\varrho(\alpha)^q$ and with norm equal to $\varrho(\alpha)^n$. Using the uniqueness of the leading eigenspace in the Perron-Frobenius theorem we conclude that $\varrho(\alpha)^{-n}\mathcal{A}_\alpha(x,n)\mathfrak{v}(x)=\mathfrak{v}(T^nx)$ as claimed.

Since $\mathfrak{r}$ is continuous and monotone with $\mathfrak{r}(0)=0$ and $\lim_{\alpha \to \infty}\mathfrak{r}(\alpha)=1$, the set $\mathfrak{r}^{-1}(L)$ is a closed subset of $(0,\infty)$ which is bounded away from $0$ and $\infty$, hence compact. For every $\alpha \in \mathfrak{r}^{-1}(L)$ we have $\max\{\rho(A_0),\rho(\alpha A_1)\}<\varrho(\alpha)$ by Definition~\ref{leppard}(v), since $\mathfrak{r}(\alpha) \notin \{0,1\}$. Let us define
\[\theta:=\sup_{\alpha \in \mathfrak{r}^{-1}( L)} \max\left\{\frac{|\det A_0|}{\varrho(\alpha)^2},\frac{|\det (\alpha A_1)|}{\varrho(\alpha)^2}\right\} \leq \sup_{\alpha\in\mathfrak{r}^{-1}(L)} \frac{\max\{\rho(A_0)^2,\rho(\alpha A_1)^2\}}{\varrho(\alpha)^2}<1,\]
and let $x,y \in X_\gamma$ with $d(x,y)\leq 2^{-k}$. We will show that $\|\mathfrak{v}(T^kx)-\mathfrak{v}(T^ky)\|_\alpha \leq 6C^6\theta^k$, where $C>1$ is the constant provided by Lemma~\ref{nf} with respect to the compact set $\mathfrak{r}^{-1}(L)$. If $3C^6\theta^k \geq 1$ then clearly $\| \mathfrak{v}(x)-\mathfrak{v}(y)\|_\alpha \leq 2 \leq 6C^6\theta^k$, so we shall assume for the remainder of the proof that $3C^6\theta^k<1$. Since
\[\frac{|\det (\varrho(\alpha)^{-k}\mathcal{A}_\alpha(x,k))|} {\|\varrho(\alpha)^{-k}\mathcal{A}_\alpha(x,k)\|} \leq \frac{C^2 \theta^k}{\| \varrho(\alpha)^{-k}\mathcal{A}_\alpha(x,k)\|_\alpha} =C^2\theta^k,\]
it follows from Lemma~\ref{sv} that there exists a rank one matrix $Q \in \mathbf{M}_2(\mathbb{R})$ such that $\|\varrho(\alpha)^{-k}\mathcal{A}_\alpha(x,k)-Q\| \leq C^2\theta^k$. Since $d(x,y) \leq 2^{-k}$ we have $\mathcal{A}_\alpha(x,k)=\mathcal{A}_\alpha(y,k)$ and therefore also $\|\varrho(\alpha)^{-k}\mathcal{A}_\alpha( y,k)-Q\| \leq C^2\theta^k$. Clearly,
\[\|\varrho(\alpha)^{-k}\mathcal{A}_\alpha(x,k)\mathfrak{v}(x) - Q\mathfrak{v}(x)\|_\alpha \leq C^4\theta^k<\frac{1}{3C^2}<\frac{1}{3},\]
and therefore in particular $|1-\|Q\mathfrak{v}(x)\|_\alpha| <1/3$ so that $2/3 < \|Q\mathfrak{v}(x)\|_\alpha< 4/3$. By identical reasoning we have $2/3<\| Q\mathfrak{v}(y)\|_\alpha<4/3$. Now, since the image of $Q$ is one-dimensional, there exists $\lambda \in \mathbb{R}$ such that $Q\mathfrak{v}(x)=\lambda Q\mathfrak{v}(y)$. We have $|\lambda|=\|Q\mathfrak{v}(x)\|_\alpha / \|Q\mathfrak{v}(y)\|_\alpha < 2$ and therefore
\begin{align*}\|\mathfrak{v}(T^kx)-\lambda \mathfrak{v}(T^ky)\|_\alpha &=\|\varrho(\alpha)^{-k}\mathcal{A}_\alpha(x,k)\mathfrak{v}(x) - \lambda\varrho(\alpha)^{-k}\mathcal{A}_\alpha(y,k)\mathfrak{v}(y)\|_\alpha\\
&\leq \|\varrho(\alpha)^{-k}\mathcal{A}_\alpha(x,k)\mathfrak{v}(x) - Q\mathfrak{v}(x)\|_\alpha\\
 & + \|\lambda Q\mathfrak{v}(y) - \lambda\varrho(\alpha)^{-k}\mathcal{A}_\alpha(y,k)\mathfrak{v}(y)\|_\alpha\\
&\leq (1+|\lambda|)C^4\theta^k <3C^4\theta^k.\end{align*}
We now claim that the real number $\lambda$ is positive. Suppose that it is negative: since $\mathfrak{v}(T^kx)$ and $\mathfrak{v}(T^ky)$ are non-negative, we must have $\|\mathfrak{v}(T^kx)-\lambda \mathfrak{v}(T^ky)\| \geq \|\mathfrak{v}(T^kx)\|$ and therefore
\begin{align*}\|\mathfrak{v}(T^kx) - \lambda \mathfrak{v}(T^kx)\|_\alpha &\geq C^{-1}\|\mathfrak{v}(T^kx)-\lambda \mathfrak{v}(T^ky)\|\\
& \geq C^{-1}\|\mathfrak{v}(T^kx)\| \geq C^{-2}\|\mathfrak{v}(T^kx)\|_\alpha = C^{-2},\end{align*}
which contradicts our assumption that $3C^6\theta^k < 1$. We conclude that $\lambda$ must be positive, and therefore
\[|1-\lambda| = \left|\left\|\mathfrak{v}(T^kx)\right\|_\alpha-\left\|\lambda \mathfrak{v}(T^ky)\right\|_\alpha\right|\leq \left\|\mathfrak{v}(T^kx)-\lambda \mathfrak{v}(T^ky)\right\|_\alpha < 3C^4\theta^k.\]
It follows easily that $\|\mathfrak{v}(T^kx)-\mathfrak{v}(T^ky)\|_\alpha <6C^4\theta^k<6C^6\theta^k$. The proof of the proposition is complete.
\end{proof}
Finally, we make use of the following simple result from matrix analysis, which we adapt from \cite[Lemma~2]{E}.
\begin{lemma}\label{elenmmna}
Let $\vvv \cdot \vvv$ be a norm on $\mathbb{R}^2$, let $B$ be a $2 \times 2$ matrix with $\vvv B\vvv \leq 1$, and suppose that $v \in \mathbb{R}^2$ with $\vvv v\vvv=1$. Let $C>1$ be any constant such that $C^{-1}\|u\| \leq \vvv u \vvv \leq C\|u\|$ for all $u \in \mathbb{R}^2$. Then $1-2C^2\sqrt{\vvv Bv -v \vvv} \leq \rho(B) \leq 1$.
\end{lemma}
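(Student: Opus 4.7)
The upper bound $\rho(B)\le\vvv B\vvv\le 1$ is immediate from the standard fact that the spectral radius is dominated by any operator norm, so all content of the lemma lies in the lower bound. Setting $\epsilon:=\vvv Bv-v\vvv$, I may focus on the regime $2C^2\sqrt{\epsilon}<1$, since the claim is otherwise vacuous. The plan is to extract a quantitative statement about $\rho(B)$ from the approximate eigenvector relation $Bv\approx v$ by squeezing the value $p(1)=\det(I-B)$ of the characteristic polynomial $p(t):=\det(tI-B)$ between two inequalities.

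For the lower bound on $|p(1)|$, I would factor $p(t)=(t-\lambda_1)(t-\lambda_2)$ over $\mathbb{C}$, note that $|\lambda_i|\le\rho(B)\le 1$, and apply the reverse triangle inequality to obtain
\[
|p(1)|=|1-\lambda_1|\,|1-\lambda_2|\ge(1-|\lambda_1|)(1-|\lambda_2|)\ge(1-\rho(B))^2.
\]
In the complex-conjugate case $\lambda_i=a\pm bi$ this reduces to the elementary inequality $(1-a)^2+b^2\ge(1-\sqrt{a^2+b^2})^2$, which follows from $a\le\sqrt{a^2+b^2}$.

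For the matching upper bound I would apply the singular value decomposition to $B-I$ relative to the Euclidean norm $\|\cdot\|$ on $\mathbb{R}^2$, using $|\det(B-I)|=\sigma_1\sigma_2$ together with $\sigma_1=\|B-I\|$ and $\sigma_2\le\|(B-I)u\|/\|u\|$ for any nonzero $u$. Specialising to $u=v$ and transferring between $\vvv\cdot\vvv$ and $\|\cdot\|$ via the hypothesised equivalence yields $\|v\|\ge C^{-1}$ and $\|(B-I)v\|\le C\epsilon$, while the estimate $\|Bu\|\le C\vvv Bu\vvv\le C\vvv B\vvv\vvv u\vvv\le C^2\|u\|$ yields $\|B\|\le C^2$. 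Hence $\sigma_1\le\|B\|+1\le 2C^2$ and $\sigma_2\le C^2\epsilon$, so that $|p(1)|=|\det(I-B)|\le 2C^4\epsilon$.

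Combining the two bounds gives $(1-\rho(B))^2\le 2C^4\epsilon$, whence $1-\rho(B)\le\sqrt{2}\,C^2\sqrt{\epsilon}\le 2C^2\sqrt{\epsilon}$, which is the required inequality. I do not foresee a genuine obstacle here: the argument is essentially a quantitative version of the observation that an approximate unit eigenvalue forces a true eigenvalue of $B$ nearby, and the only real bookkeeping concerns the translation between the abstract norm $\vvv\cdot\vvv$ (in which the hypothesis is phrased) and the Euclidean norm (in which the singular value decomposition is natural), which is precisely what the powers of $C$ account for.
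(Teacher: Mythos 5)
Your proof is correct, and it takes a genuinely different route from the paper's. You squeeze the quantity $|\det(I-B)|$: the factorisation $\det(I-B)=(1-\lambda_1)(1-\lambda_2)$ together with $|\lambda_i|\le\rho(B)\le 1$ gives the lower bound $(1-\rho(B))^2\le|\det(I-B)|$, while the singular value decomposition of $B-I$ in the Euclidean norm, with $\sigma_1=\|B-I\|\le\|B\|+1\le 2C^2$ and $\sigma_2\le\|(B-I)v\|/\|v\|\le C^2\vvv Bv-v\vvv$, gives the matching upper bound $|\det(I-B)|\le 2C^4\vvv Bv-v\vvv$; combining the two yields the claim, in fact with the slightly sharper constant $\sqrt{2}\,C^2$ in place of $2C^2$. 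The paper argues differently: it forms the rank-one-corrected matrix $M_2:=B+\|v\|^{-2}(v-Bv)v^T$, which fixes $v$ exactly, and invokes the standard eigenvalue-perturbation inequality $\min_i|\lambda_i-\mu|\le\sqrt{(\|M_1\|+\|M_2\|)\|M_1-M_2\|}$ (quoted from Bhatia) with $\mu=1$, concluding that $B$ has an eigenvalue within $2C^2\sqrt{\vvv Bv-v\vvv}$ of $1$ — a marginally stronger statement about the location of an eigenvalue, which is then weakened to the bound on $\rho(B)$. Your version is self-contained and avoids the external perturbation lemma, at the price of being tied to the $2\times 2$ setting through the explicit two-eigenvalue factorisation (the paper's cited inequality in this square-root form is likewise two-dimensional, so nothing is lost in the present application). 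Both arguments rest on the same two ingredients — that $v$ is an approximate fixed vector, and the norm-equivalence bookkeeping — and your transfer estimates $\|v\|\ge C^{-1}$, $\|Bv-v\|\le C\vvv Bv-v\vvv$ and $\|B\|\le C^2$ are exactly the ones the paper uses.
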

\begin{proof}
If $M_1$ and $M_2$ are a pair of $2 \times 2$ real matrices, $\mu$ is an eigenvalue of $M_2$, and $\lambda_1,\lambda_2$ are the eigenvalues of $M_1$, then the bound
\[\min\{|\lambda_1-\mu|,|\lambda_2-\mu|\} \leq \sqrt{(\|M_1\|+\|M_2\|)(\|M_1-M_2\|)}\]
is well-known, see for example \cite[\S VIII]{Bhat}. Define $M_1:=B$ and $M_2:=B+\|v\|^{-2}(v-Bv)v^T$; we may then estimate
\[\|M_1-M_2\| =\|v\|^{-2}.\|(v-Bv)v^T\| \leq \|v\|^{-1}.\|Bv-v\| \leq C^2 \vvv Bv-v\vvv\]
and
 \[\|M_1\|+\|M_2\| \leq 2\|M_1\| + \|M_1-M_2\| \leq 2C^2 \vvv B\vvv + C^2\vvv Bv-v\vvv \leq 4C^2.\]
Since $M_2v=v$, $1$ is an eigenvalue of $M_2$, and it follows that $B$ has an eigenvalue $\lambda$ such that $|\lambda-1| \leq 2C^2\sqrt{\vvv Bv -v\vvv}$. The result follows in view of the elementary inequality $|\lambda| \leq \rho(B) \leq \vvv B \vvv =1$.
\end{proof}
The following key estimate forms the core of the proof of Theorem~\ref{main}(iii):

\begin{lemma}\label{fourpointsix}
Let $L \subset (0,1)$ be a compact interval, let $\gamma=p_n/q_n =[a_1,\ldots,a_n] \in L$ where $n,a_n>1$, and choose any $\alpha \in (0,\infty)$ such that $\mathfrak{r}(\alpha)=\gamma$. Let $p_{n-1}/q_{n-1}=[a_1,\ldots,a_{n-1}]$.  Then
\[
0 \leq (-1)^{n+1}\left(\frac{S(\gamma)- S(p_{n-1}/q_{n-1})}{\gamma-p_{n-1}/q_{n-1}}+\log \alpha\right) \leq K q_n\theta^{q_n},
\]
where $K>1$ and $\theta \in (0,1)$ are constants depending only on $L$.
\end{lemma}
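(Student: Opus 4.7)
The plan is to split the estimate into a lower bound and an upper bound. The lower bound comes essentially for free: by Proposition~\ref{sfunction}(ii), $-\log\alpha$ is a subgradient of $S$ at $\gamma$, so concavity gives
\[
S(p_{n-1}/q_{n-1}) - S(\gamma) \leq -\log\alpha \cdot (p_{n-1}/q_{n-1} - \gamma).
\]
Dividing by $\gamma - p_{n-1}/q_{n-1}$ (whose sign is $(-1)^{n+1}$ since consecutive convergents lie on opposite sides of $\gamma$) and rearranging yields $(-1)^{n+1}(\text{slope} + \log\alpha) \geq 0$.

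For the upper bound, I would combine the three ingredients of the preceding section. Apply Proposition~\ref{cworp} to obtain $x \in X_\gamma$ and an integer $k > q_n/3$ such that $d(x,T^{q_{n-1}}x) \leq 2^{-k}$ and such that $T^k x$ is prefixed by the cyclically balanced word $s_{n-1}$ with $|s_{n-1}|=q_{n-1}$, $|s_{n-1}|_1 = p_{n-1}$. By Proposition~\ref{sfunction}(iii), $\rho(\mathcal{A}(T^kx,q_{n-1})) = e^{q_{n-1}S(p_{n-1}/q_{n-1})}$, so
\[
\rho(\mathcal{A}_\alpha(T^kx,q_{n-1})) = \alpha^{p_{n-1}} e^{q_{n-1}S(p_{n-1}/q_{n-1})}.
\]
On the other hand, set $v := \mathfrak{v}(T^kx)$ and $B := \varrho(\alpha)^{-q_{n-1}}\mathcal{A}_\alpha(T^kx,q_{n-1})$. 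Since $Bv$ equals $\mathfrak{v}(T^{k+q_{n-1}}x)$ by Proposition~\ref{invtgraph}, and since Proposition~\ref{invtgraph} applied to the pair $(x,T^{q_{n-1}}x)$ controls $\|\mathfrak{v}(T^kx) - \mathfrak{v}(T^k(T^{q_{n-1}}x))\|_\alpha \leq K\theta^k$, we get $\vvv Bv - v \vvv_\alpha \leq K\theta^k$. Lemma~\ref{nf} provides a uniform comparison between $\|\cdot\|$ and $\|\cdot\|_\alpha$ over $\mathfrak{r}^{-1}(L)$, and Lemma~\ref{elenmmna} then gives $\rho(B) \geq 1 - C'\sqrt{\theta^k}$ for a constant $C'$ depending only on $L$.

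Combining these two expressions for $\rho(\mathcal{A}_\alpha(T^kx,q_{n-1}))$ and using $\log \varrho(\alpha) = \gamma\log\alpha + S(\gamma)$ (Proposition~\ref{sfunction}(ii)) yields
\[
p_{n-1}\log\alpha + q_{n-1}S(p_{n-1}/q_{n-1}) \geq \frac{p_n q_{n-1}}{q_n}\log\alpha + q_{n-1}S(\gamma) + \log(1 - C'\sqrt{\theta^k}).
\]
The key algebraic step is the Bezout identity $p_{n-1}q_n - p_n q_{n-1} = (-1)^n$, which gives $p_{n-1} - p_n q_{n-1}/q_n = (-1)^n/q_n = -(-1)^{n+1}/q_n$, and simultaneously $\gamma - p_{n-1}/q_{n-1} = (-1)^{n+1}/(q_n q_{n-1})$. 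Dividing the displayed inequality by $q_{n-1}$, rearranging, and then dividing by $\gamma - p_{n-1}/q_{n-1}$ with careful sign tracking produces
\[
(-1)^{n+1}\left(\frac{S(\gamma) - S(p_{n-1}/q_{n-1})}{\gamma - p_{n-1}/q_{n-1}} + \log\alpha\right) \leq -q_n \log(1 - C'\sqrt{\theta^k}).
\]

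The main obstacle is bookkeeping: tracking the parity signs and the continued fraction identities to show that the algebra collapses into exactly the one-sided expression the lemma asks for. Once that is done, the final estimate uses $-\log(1-x) \leq 2x$ for small $x$ together with $k > q_n/3$, which gives an upper bound of the shape $C'' q_n \theta^{q_n/6}$; relabeling $\theta^{1/6}$ as a new $\theta \in (0,1)$ and absorbing constants into $K$ puts it in the required form $Kq_n\theta^{q_n}$. (For small $q_n$ the bound is trivial after possibly enlarging $K$.)
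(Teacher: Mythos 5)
Your proposal is correct and follows essentially the same route as the paper's proof: the lower bound from concavity and the subgradient characterisation in Proposition~\ref{sfunction}(ii), and the upper bound by combining Proposition~\ref{cworp}, the eigenvector estimate of Proposition~\ref{invtgraph} applied to the pair $(x,T^{q_{n-1}}x)$, Lemmas~\ref{nf} and~\ref{elenmmna}, the identity $\varrho(\alpha)=e^{S(\gamma)}\alpha^{\gamma}$, and the Bezout relation $p_{n-1}q_n-p_nq_{n-1}=(-1)^n$, with small denominators absorbed into $K$. The only cosmetic difference is that the paper records the two-sided bound $1-2C^2\sqrt{K}\theta^{q_n/6}\leq \varrho(\alpha)^{-q_{n-1}}\rho(\mathcal{A}_\alpha(y,q_{n-1}))\leq 1$ and then rearranges, whereas you derive only the one-sided inequality actually needed; the bookkeeping you flag works out exactly as you describe.
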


\begin{proof}
From the classical theory of continued fractions we have $\gamma < p_{n-1}/q_{n-1}$  if $n$ is even, and the reverse inequality holds if $n$ is odd. By Proposition~\ref{sfunction} we know that $S$ is concave and $(-\log\alpha)$ is a subgradient of  $S$ at $\gamma$. Since the average gradient of a concave function on a closed interval is bounded below by every subgradient at the right endpoint, and bounded above by any subgradient at the left endpoint, we immediately deduce the inequality
\[
0 \leq (-1)^{n+1}\left(\frac{S(\gamma)- S(p_{n-1}/q_{n-1})}{\gamma-p_{n-1}/q_{n-1}}+\log \alpha\right).
\]
In proving the remainder of the lemma we will assume that $q_n \geq m$ for some constant $m \geq 1$ to be determined below. Indeed, given any such $m$ it is clear that $L$ contains only finitely many rational numbers $p_n/q_n$ with denominator less than $m$, and so by adjusting the constant $K>1$ if necessary, the full strength of the lemma follows from this special case.
Since the hypotheses of Proposition~\ref{cworp} are satisfied, we may fix an integer $k>\frac{1}{3}q_n$ and a point $x \in X_\gamma$ such that $d(x,T^{q_{n-1}}x) \leq 2^{-k}$ and $q_{n-1}^{-1}\log\rho(\mathcal{A}(T^kx,q_{n-1}))=S(p_{n-1}/q_{n-1})$. Let $y:=T^kx$. By Proposition~\ref{invtgraph} we have
\[\|\varrho(\alpha)^{-q_{n-1}}\mathcal{A}_\alpha(y,q_{n-1})\mathfrak{v}(y) - \mathfrak{v}(y)\|_\alpha = \|\mathfrak{v}(T^{q_{n-1}}y)-\mathfrak{v}(y)\|_\alpha \leq K \theta^k\]
for some constants $K>1$ and $\theta \in (0,1)$ depending on $L$. Combining this with Lemma~\ref{elenmmna} we obtain
\[1- 2C^2\sqrt{K}\theta^{\frac{1}{6}q_n} \leq \varrho(\alpha)^{-q_{n-1}}\rho(\mathcal{A}_\alpha(y,q_{n-1})) \leq 1.\]
where $C>1$ is the constant assigned by Lemma \ref{nf} to the compact set $\mathfrak{r}^{-1}(L) \subset (0,\infty)$. Since $\mathfrak{r}(\alpha)=\gamma$ we have $\varrho(\alpha)=e^{S(\gamma)}\alpha^\gamma$, and therefore
\[
1- 2C^2\sqrt{K}\theta^{\frac{1}{6}q_n} \leq\left(e^{-q_{n-1}S(\gamma)}\alpha^{-q_{n-1}\gamma}\right) \left(e^{S(p_{n-1}/q_{n-1})}\alpha^{p_{n-1}}\right) \leq 1.
\]
Let $m$ be an integer which is large enough that $1-2C^2\sqrt{K}\theta^{m/6}>e^{-1}$. By considering only those cases where $q_n \geq m$, we may by taking logarithms obtain
\[
-2C^2\sqrt{K}\theta^{\frac{1}{6}q_{n}} \leq q_{n-1}S(p_{n-1}/q_{n-1}) - q_{n-1}S(\gamma)  + (p_{n-1}-q_{n-1}\gamma)\log\alpha \leq 0,
\]
and by a slight rearrangement,
\[
0 \leq S(\gamma)-S(p_{n-1}/q_{n-1}) + \left(\gamma-\frac{p_{n-1}}{q_{n-1}}\right)\log\alpha \leq\frac{2C^2\sqrt{K}}{q_{n-1}}\theta^{\frac{1}{6}q_{n}}.
\]
Now, since
\[
(-1)^{n+1}\left(\gamma-\frac{p_{n-1}}{q_{n-1}}\right)= \frac{(-1)^{n+1}}{q_{n-1}q_n}\left(p_nq_{n-1}-q_np_{n-1}\right)= \frac{1}{q_{n-1}q_n}>0
\]
we may derive the inequality
\[
0 \leq (-1)^{n+1}\left(\frac{S(\gamma)- S(p_{n-1}/q_{n-1})}{\gamma-p_{n-1}/q_{n-1}}+\log \alpha\right) \leq 2q_nC^2\sqrt{K}\theta^{\frac{1}{6}q_n},
\]
which completes the proof.
\end{proof}

The following interesting result may be derived from Lemma~\ref{fourpointsix}:

\begin{corollary}\label{rationalbound}
Let $L \subset (0,1)$ be compact. Then there exist constants $C>1$, $\theta \in (0,1)$ depending on $L$ such that for all $p/q \in L$ with $p$ and $q$ coprime, the length of the interval $\mathfrak{r}^{-1}(p/q)$ is bounded by $Cq\theta^q$.
\end{corollary}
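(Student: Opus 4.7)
The plan is to combine the formula
\[
\mathfrak{r}^{-1}(\gamma) = \left[e^{-S_\ell'(\gamma)},\; e^{-S_r'(\gamma)}\right]
\]
derived at the start of Section~\ref{ratchap} with Lemma~\ref{fourpointsix}. The formula shows that the length of $\mathfrak{r}^{-1}(p/q)$ is controlled by the size of the jump $S_\ell'(p/q)-S_r'(p/q) \geq 0$ in the derivative of the concave function $S$ at $p/q$, and Lemma~\ref{fourpointsix} will be used to bound this jump by $Kq\theta^q$.

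Given $p/q \in L$ in lowest terms with $q \geq 2$, choose a continued fraction expansion $p/q = [a_1, \ldots, a_n]$ with $n > 1$ and $a_n > 1$. Every rational in $(0,1)$ with denominator at least $2$ admits such an expansion except for those of the form $1/k$; since $L \subset (0,1)$ is compact, only finitely many $1/k$ lie in $L$, and the bound for these finitely many exceptional values is accommodated by enlarging $C$ at the end.

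The endpoints $\alpha_\ell := e^{-S_\ell'(p/q)}$ and $\alpha_r := e^{-S_r'(p/q)}$ of $\mathfrak{r}^{-1}(p/q)$ both satisfy $\mathfrak{r}(\alpha_\ell)=\mathfrak{r}(\alpha_r)=p/q$, so applying Lemma~\ref{fourpointsix} to each shows that $\log\alpha_\ell$ and $\log\alpha_r$ both lie in a single real interval of length $Kq\theta^q$ --- namely the one-sided neighbourhood of the average slope $[S(p/q)-S(p_{n-1}/q_{n-1})]/[p/q-p_{n-1}/q_{n-1}]$ whose orientation is determined by the parity of $n$. In particular
\[
S_\ell'(p/q) - S_r'(p/q) = \log\alpha_r - \log\alpha_\ell \leq Kq\theta^q,
\]
and the elementary convexity inequality $e^a - e^b \leq e^a(a-b)$ for $a \geq b$ then yields
\[
\alpha_r - \alpha_\ell \leq \alpha_r \bigl(S_\ell'(p/q) - S_r'(p/q)\bigr) \leq \alpha_r Kq\theta^q.
\]
Since $\mathfrak{r}$ is continuous on $[0,\infty)$ with $\mathfrak{r}(0)=0$ and $\lim_{\alpha\to\infty}\mathfrak{r}(\alpha)=1$, the preimage $\mathfrak{r}^{-1}(L)$ is a compact subset of $(0,\infty)$, so $\alpha_r$ is bounded above by some $M = M(L)$; setting $C := MK$ (and enlarging it to absorb the finitely many exceptional denominators) delivers the advertised bound.

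The only technical subtlety is verifying that Lemma~\ref{fourpointsix} may be applied with the same continued fraction expansion to both endpoints simultaneously, so that $\log\alpha_\ell$ and $\log\alpha_r$ are pinched to the \emph{same} average slope with constants independent of $p/q \in L$; this is transparent from the lemma, whose estimate depends only on $\gamma$ and not on the particular choice of $\alpha \in \mathfrak{r}^{-1}(\gamma)$.
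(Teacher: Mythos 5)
Your proposal is correct and follows essentially the same route as the paper: express $p/q=[a_1,\ldots,a_n]$ with $a_n>1$ (handling the finitely many exceptions $1/k$ by enlarging the constant), apply Lemma~\ref{fourpointsix} to both endpoints of $\mathfrak{r}^{-1}(p/q)$ to pin their logarithms near the same average slope, and conclude. The only difference is that you spell out the final passage from the bound on $|\log\alpha_r-\log\alpha_\ell|$ to a bound on $\alpha_r-\alpha_\ell$ via compactness of $\mathfrak{r}^{-1}(L)$, which the paper leaves implicit.
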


\begin{proof}
By enlarging the constant $C$ if necessary it is sufficient to consider rationals $p/q$ which are not of the form $1/k$ for an integer $k \geq 1$, since $L$ can contain only finitely many rationals of this form. Let $p/q \in L$ with $p$ and $q$ coprime and $p>1$. These assumptions allow us to find an integer $n \geq 2$ and integers $a_1,\ldots,a_n$ with $a_n>1$ such that $p/q=[a_1,\ldots,a_n]$. If $\alpha_1$ and $\alpha_2$ are the endpoints of the closed interval $\mathfrak{r}^{-1}(p/q)$, then
we may apply Lemma~\ref{fourpointsix} twice with $\alpha=\alpha_1, \alpha_2$ to see that $|\log \alpha_2 -\log\alpha_1| \leq 2Kq\theta^q$. The result follows.\end{proof}

Recall that if $\gamma=[a_1,a_2,\ldots] \in (0,1) \setminus \mathbb{Q}$ and the sequence $(a_n)$ is bounded, then there exists a constant $\delta>0$ such that $|\gamma-p/q|>\delta q^{-2}$ for all $q \in \mathbb{N}$ and $p \in \mathbb{Z}$ (see for example \cite{Kh}). In particular, for all sufficiently large $k$ the relation $|\gamma - p/q| \leq 1/q^k$ is impossible for integers $p \in \mathbb{Z}$ and $q \geq 2$. The proof of the following lemma is thus identical to the proof of \cite[Lemma~8.3]{HMST}:
\begin{lemma}\label{irrat}
Let $\gamma =[a_1,a_2,\ldots] \in (0,1) \setminus \mathbb{Q}$, and suppose that $a_n=1$ for all sufficiently large $n$. Then $\mathfrak{r}^{-1}(\gamma)$ is a singleton set.
\end{lemma}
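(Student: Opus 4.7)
The plan is to follow the argument of \cite[Lemma 8.3]{HMST} verbatim. The key input is the Diophantine bound recalled just above the statement: since $a_n = 1$ for all sufficiently large $n$, the sequence $(a_n)$ is bounded, and so $|\gamma - p/q| > \delta q^{-2}$ for every rational $p/q$ and some fixed $\delta > 0$. I would argue by contradiction, assuming that $\mathfrak{r}^{-1}(\gamma) = [\alpha^-, \alpha^+]$ with $\alpha^- < \alpha^+$; by Proposition~\ref{sfunction}(ii) this is equivalent to the subgradient interval $\partial S(\gamma) = [\lambda^-, \lambda^+]$ being nondegenerate.

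The central technical input would be Lemma~\ref{fourpointsix}, applied to the convergents $p_n/q_n$ of $\gamma$. When $a_n > 1$ the lemma applies directly; when $a_n = 1$ (the eventual situation here), one passes to the equivalent representation $p_n/q_n = [a_1, \ldots, a_{n-1}+1]$, or equivalently extends the proof of Proposition~\ref{cworp}: the recursion $s_n = s_{n-1}s_{n-2}$ still produces an initial matching segment of length $q_{n-2} > q_n/3$ after shifting $s_n^\infty$ by $q_{n-1}$. Either way, one obtains a bound of the form $|\log\alpha_n + \sigma_n| \leq K q_n \theta^{q_n}$ for every $\alpha_n \in \mathfrak{r}^{-1}(p_n/q_n)$, where $\sigma_n$ is an explicit secant slope of $S$ between convergents of $\gamma$.

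Using the monotonicity and continuity of $\mathfrak{r}$ (Theorem~\ref{main}(i)) together with Corollary~\ref{rationalbound}, the endpoints of $\mathfrak{r}^{-1}(p_n/q_n)$ converge to the appropriate endpoint of $\mathfrak{r}^{-1}(\gamma)$ according to the parity of $n$, so that $-\log\alpha_n$ tends to the corresponding endpoint of $[\lambda^-, \lambda^+]$. The Diophantine inequality $|\gamma - p_n/q_n| > \delta q_n^{-2}$ then provides a polynomial-in-$q_n$ lower bound on the approach rate, while the estimate from Lemma~\ref{fourpointsix} controls the error $|\log\alpha_n + \sigma_n|$ exponentially in $q_n$. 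Combining these two bounds in the manner of \cite[Lemma 8.3]{HMST} forces $\lambda^+ - \lambda^-$ to vanish, contradicting the initial assumption and completing the proof. The main obstacle I anticipate is the extension of Lemma~\ref{fourpointsix} to the degenerate case $a_n = 1$; once this is in place, the remaining combinatorial and quantitative steps transfer verbatim from the HMST argument.
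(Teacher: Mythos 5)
Your proposal is essentially the paper's own argument: the paper observes exactly as you do that eventually-$1$ partial quotients make $\gamma$ badly approximable (hence not Liouville) and then invokes the proof of \cite[Lemma~8.3]{HMST} verbatim, so your route through the Diophantine bound combined with an exponentially small secant estimate at the convergents is what is intended. One caution on your ``main obstacle'': the two fixes you call equivalent are not. Rewriting $p_n/q_n=[a_1,\ldots,a_{n-1}+1]$ makes the comparison point in Lemma~\ref{fourpointsix} the convergent $p_{n-2}/q_{n-2}$, which lies on the \emph{same} side of $\gamma$ as $p_n/q_n$, and one-sided secant data alone cannot detect a corner of $S$ at $\gamma$ (a piecewise-affine concave function with a corner exactly at $\gamma$ satisfies all such estimates with zero error); it is the extension of Proposition~\ref{cworp} -- take $k=q_{n-2}$, note that the block of length $q_{n-1}$ starting at position $k+1$ of $s_n^\infty$ is a cyclic permutation of $s_{n-1}$, and that $q_{n-2}>\frac{1}{3}q_n$ holds for all large $n$ because $a_{n-1}=1$ as well -- which keeps the straddling convergent $p_{n-1}/q_{n-1}$ in the estimate and thereby lets the corner-killing step (with or without the Diophantine input) go through.
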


\section{Preimages of irrational points: proof of Theorem.}
\label{sec:irrat}

Let us define $Z:=\mathfrak{r}^{-1}((0,1)\setminus \mathbb{Q})$, and partition $Z$ into two subsets as follows. We define $Z_1$ to be the set of all $\alpha \in Z$ such that the infinite continued fraction expansion $[a_1,a_2,\ldots]$ of the irrational number $\mathfrak{r}(\alpha) \in (0,1)$ satisfies $a_k=1$ for all but finitely many $k$. If $\gamma=[a_1,a_2,\ldots]$ is an irrational number of this type, then by Lemma~\ref{irrat} the set $\mathfrak{r}^{-1}(\gamma)$ is a singleton set. It follows that $Z_1$ is countable, and hence has zero Hausdorff dimension. Let us now define $Z_0:=Z \setminus Z_1$. Since the Hausdorff dimension of a countable union of sets is equal to the supremum of their individual Hausdorff dimensions, to prove that $\dim_H(Z)=0$ as claimed it is sufficient (and indeed necessary) to show that $\dim_H(Z_0)=0$. Moreover, it is sufficient to show that for some sequence of sets $L_k$ whose union covers $(0,1)$, each of the sets $Z_0 \cap \mathfrak{r}^{-1}(L_k)$ has Hausdorff dimension equal to zero. For the remainder of the proof, we fix a set $L$ of the form $[\frac{1}{k},1-\frac{1}{k}]$ with the aim of showing that the set $\mathfrak{r}^{-1}(L) \cap Z_0$ has Hausdorff dimension zero.

Given natural numbers $n, a_1,a_2,\ldots,a_n$, let $\Gamma_{(a_1,\ldots,a_n)}$ denote the half-open interval with endpoints $[a_1,\ldots,a_n]$ and $[a_1,\ldots,1+a_n]$ which excludes the former endpoint but includes the latter. This interval consists precisely of those elements of $(0,1)$ which admit a continued fraction expansion whose first $n$ entries are $a_1,\ldots,a_n$ respectively, and whose length is at least $n+1$. Given natural numbers $n \geq 1$ and $a_1,\ldots,a_n$, let us define
\[\mathcal{I}_{(a_1,\ldots,a_n)}:=\mathfrak{r}^{-1}\left(L \cap \Gamma_{(a_1,\ldots,a_n)}\right).\]
Note that by our choice of $L$, if $n \geq 2$ then the set $\mathcal{I}_{(a_1,\ldots,a_n)}$ is either empty or is equal to all of $\mathfrak{r}^{-1}(\Gamma_{(a_1,\ldots,a_n)})$. For each $N \geq 2$ let us define $\mathcal{U}_N$ to be the set of all $\mathcal{I}_{(a_1,\ldots,a_n)}$ such that $a_n>1$, $n \geq N$, and $a_k=1$ for all $k$ such that $N \leq k<n$. The reader may easily verify that $\Gamma_{(a_1,\ldots,a_n)} \cap \Gamma_{(b_1,\ldots,b_m)} = \emptyset$ when the vectors $(a_1,\ldots,a_n)$  and $(b_1,\ldots,b_m)$ are distinct, and furthermore,

\begin{equation}\label{boll}
\mathfrak{r}^{-1}(L) \cap Z_0 \subseteq \bigcup_{(a_1,\ldots,a_n) \in \mathcal{U}_N} \mathcal{I}_{(a_1,\ldots,a_n)}
\end{equation}
for every $N \geq 2$. We make the following key claim: if $n \geq 2$ is an integer, then for each $n$-tuple of natural numbers $(a_1,\ldots,a_n)\in\mathbb{N}^n$ such that $a_n>1$ there holds the inequality
\begin{equation}
\label{quay}
\diam \mathcal{I}_{(a_1,\ldots,a_n)} \leq Kq_n\theta^{q_n},
\end{equation}
where $p_n/q_n:=[a_1,\ldots,a_n]$ in least terms, and $K>1$ and $\theta \in (0,1)$ are constants depending only on $L$.

Let us prove this claim. Fix an integer $n \geq 2$ and suppose that $\mathcal{I}_{(a_1,\ldots,a_n)}$ is nonempty with $a_n>1$. Let $\alpha_1$ and $\alpha_2$ be respectively the infimum and the supremum of $\mathcal{I}_{(a_1,\ldots,a_n)}$, and let $\gamma_i=\mathfrak{r}(\alpha_i)$ for $i=1,2$. If $n$ is odd then we have
$\gamma_1=[a_1,\ldots,1+a_n]$ and $\gamma_2=[a_1,\ldots,a_n]$, and if $n$ is even then $\gamma_1=[a_1,\ldots,a_n]$ and $\gamma_2=[a_1,\ldots,1+a_n]$. Define also $p_{n-1}/q_{n-1}:=[a_1,\ldots,a_{n-1}] = \lim_{i \to \infty} [a_1,\ldots,a_{n-1},i]$. Our objective is to bound the difference $\alpha_2-\alpha_1$.

We consider first the case in which $n$ is odd, in which case $p_{n-1}/q_{n-1}<\gamma_1$. Recall that for a concave function defined on an interval $[a,b]$, the average gradient in the interval is greater than the value of any subgradient at $b$, and less than the value of any subgradient at $a$. Since $(-\log \alpha_2)$ is a subgradient of $S$ at $\gamma_2$, and $(-\log\alpha_1)$ is a subgradient of $S$ at $\gamma_1$, it follows that
\[
S(\gamma_2)-S(\gamma_1) \geq (\gamma_2-\gamma_1)(-\log\alpha_2)
\]
and
\[
S(\gamma_1)-S(p_{n-1}/q_{n-1}) \geq (\gamma_1-p_{n-1}/q_{n-1})(-\log \alpha_1).
\]
Adding these two inequalities together, we obtain
\[
(\gamma_2-\gamma_1)(-\log\alpha_2) +(\gamma_1-p_{n-1}/q_{n-1})(-\log\alpha_1)\leq S(\gamma_2)-S(p_{n-1}/q_{n-1})
\]
and therefore
\begin{align*}
(\gamma_1-p_{n-1}/q_{n-1})(\log \alpha_2 - \log \alpha_1)\leq &S(\gamma_2)-S(p_{n-1}/q_{n-1}) \\ &+ (\gamma_2-p_{n-1}/q_{n-1})\log\alpha_2.
\end{align*}
Since $L$ is a compact subinterval of $(0,1)$, $\mathfrak{r}^{-1}(L)$ is a compact subinterval of $(0,\infty)$, so there is a constant $C>0$ depending on $L$ such that $|\log x - \log y| \geq C^{-1}|x-y|$ for every $x,y \in L$. Hence
\begin{align*}
C^{-1}(\alpha_2-\alpha_1) &\leq \log \alpha_2 - \log \alpha_1\\ &\leq \left(\frac{\gamma_2-p_{n-1}/q_{n-1}}{\gamma_1-p_{n-1}/q_{n-1}}\right) \left(\frac{S(\gamma_2)-S(p_{n-1}/q_{n-1})} {\gamma_2-p_{n-1}/q_{n-1}}+\log\alpha_2\right).
\end{align*}
Let $p_{n-2}/q_{n-2}=[a_1,\ldots,a_{n-2}]$ in least terms. Since
\[
\gamma_1=\frac{(1+a_n)p_{n-1}+p_{n-2}}{(1+a_n)q_{n-1}+q_{n-2}}, \qquad\quad \gamma_2=\frac{a_n p_{n-1}+p_{n-2}}{a_n q_{n-1}+q_{n-2}},
\]
and $q_{n-1}p_{n-2}-q_{n-2}p_{n-1}=1$ it follows that
\begin{align*}
\frac{\gamma_2-p_{n-1}/q_{n-1}}{\gamma_1-p_{n-1}/q_{n-1}} &= \frac{1/(a_nq_{n-1}^2 + q_{n-1}q_{n-2})}{1/((1+a_n)q_{n-1}^2 + q_{n-1}q_{n-2})}\\
&=\frac{1+a_n + \frac{q_{n-2}}{q_{n-1}}}{a_n + \frac{q_{n-2}}{q_{n-1}}} \\
&\leq \frac{2+a_n}{a_n} \leq 3.
\end{align*}
Applying Lemma~\ref{fourpointsix}, we obtain
\begin{align*}
\alpha_2-\alpha_1  &\leq 3C(-1)^{n+1}\left(\frac{S(\gamma_2)-S(p_{n-1}/q_{n-1})} {\gamma_2-p_{n-1}/q_{n-1}}+\log\alpha_2\right)  \\
&\leq 3CKq_n\theta^{q_n}
\end{align*}
as required, which completes the proof of the claim in the case where $n$ is odd.

We now consider the case in which $n$ is even. In this case we have $p_{n-1}/q_{n-1}>\gamma_2$. By comparing subgradients in a similar manner to the odd case we arrive at the inequalities
\[
S(\gamma_2) -S(\gamma_1) \leq (\gamma_2-\gamma_1)(-\log \alpha_1),\]
\[S(p_{n-1}/q_{n-1})-S(\gamma_2) \leq (p_{n-1}/q_{n-1}-\gamma_2)(-\log\alpha_2).
\]
Adding these two inequalities yields
\[
S(p_{n-1}/q_{n-1})-S(\gamma_1) \leq (p_{n-1}/q_{n-1}-\gamma_2)(-\log\alpha_2) + (\gamma_2-\gamma_1)(-\log\alpha_1)
\]
and therefore
\[
S(p_{n-1}/q_{n-1})-S(\gamma_1) + (p_{n-1}/q_{n-1}-\gamma_1)\log \alpha_1 \leq (\gamma_2-p_{n-1}/q_{n-1})(\log \alpha_2-\log \alpha_1).
\]
Dividing by the negative real number $\gamma_2-p_{n-1}/q_{n-1}$ we obtain

\begin{align*}
\log \alpha_2-\log\alpha_1 &\leq -\left(\frac{\gamma_1-p_{n-1}/q_{n-1}}{\gamma_2-p_{n-1}/q_{n-1}}\right) \left(\frac{S(\gamma_1)-S(p_{n-1}/q_{n-1})}{\gamma_1-p_{n-1}/q_{n-1}}+\log \alpha_1\right)\\
&\leq 3(-1)^{n+1}\left(\frac{S(\gamma_1)-S(p_{n-1}/q_{n-1})} {\gamma_1-p_{n-1}/q_{n-1}}+\log \alpha_1\right),
\end{align*}
and it follows using Lemma~\ref{fourpointsix} that $\alpha_2-\alpha_1 \leq 3CKq_n\theta^{q_n}$ as before. This completes the proof of the claim.

We may now show directly that $\mathfrak{r}^{-1}(L) \cap Z_0$ has Hausdorff dimension zero. We recall the definition of the Hausdorff dimension of a set $Y \subseteq \mathbb{R}$. For each $\lambda \geq 0$, the $\lambda$-dimensional Hausdorff outer measure of the set $Y$ is defined to be the quantity
\[
\overline{\lim_{\delta \to 0}} \inf\left\{ \sum_{U \in \mathcal{U}} (\diam U)^\lambda \colon Y \subseteq \bigcup_{U \in \mathcal{U}} U \text{ and }\sup_{U \in \mathcal{U}} \diam U \leq \delta\right\},
\]
where each $\mathcal{U}$ is a collection of subsets of $\mathbb{R}$. The Hausdorff dimension of the set $Y$ is then defined to be the infimum of the set of all $\lambda \geq 0$ such that the $\lambda$-dimensional Hausdorff outer measure of $Y$ is zero, or equivalently the infimum of the set of all $\lambda \geq 0$ for which this value is finite.

Let $\lambda \in (0,1]$, and choose any $\delta>0$. We saw in \eqref{boll} that the union of the elements of $\mathcal{U}_N$ contains $\mathfrak{r}^{-1}(L) \cap Z_0$ for every $N \geq 2$. It follows from \eqref{quay} that if $N$ is large enough then every element of $\mathcal{U}_N$ has diameter less than $\delta$. For any such $N$ we have
\begin{align*}
\sum_{(a_1,\ldots,a_n) \in\mathcal{U}_N} \left(\diam \mathcal{I}_{(a_1,\ldots,a_n)}\right)^\lambda &\leq \sum_{(a_1,\ldots,a_n) \in\mathcal{U}_N} \left(Kq_n\theta^{q_n}\right)^{\lambda}\\
&< \sum_{p/q \in \mathbb{Q} \cap (0,1)} \left(Kq \theta^q\right)^\lambda\\
&= \sum_{q=2}^\infty \sum_{p=1}^{q-1} K^\lambda q^{\lambda} \theta^{\lambda q}\\
&< K\sum_{q=1}^\infty q^2 \theta^{\lambda q} = \frac{K\theta^\lambda(1+\theta^\lambda)}{(1-\theta^\lambda)^3},
\end{align*}
and since this bound is independent of $\delta$, we conclude that the $\lambda$-dimensional Hausdorff measure of $\mathfrak{r}^{-1}(L) \cap Z_0$ is finite. Since $\lambda$ may be chosen arbitrarily close to $0$, we conclude that $\dim_H(\mathfrak{r}^{-1}(L) \cap Z_0)=0$ as required. The proof of Theorem~\ref{main}(iii) is complete.

\section{Explicit formulae}
\label{sec:expl}

In this section we prove Theorem~\ref{exact-irrational} and present some bounds which can be used for practical computation of $\mathfrak{r}^{-1}(\gamma)$ in the special case of the matrices defined by \eqref{hmstmat} when $\ga$ is not too well approximated by rationals. In \cite{HMST} we proved the following result (the indexing of the sequences in the statement of Theorem~\ref{counter} has been adjusted so as to agree with the conventions used elsewhere in this section):
\begin{theorem}\label{counter}
Let $(\tau_n)_{n=0}^\infty$ denote the sequence of integers defined by $\tau_{-2}:=1$, $\tau_{-1},\tau_0:=2$, and \begin{equation}\label{eq:taun}
\tau_{n+1}:=\tau_n\tau_{n-1}-\tau_{n-2}\ \text{for all}\ n \geq 0,
\end{equation}
and let $(F_n)_{n=0}^\infty$ denote the sequence of Fibonacci numbers, defined by $F_0:=1$, $F_1:=1$ and $F_{n+1}:=F_n+F_{n-1}$ for all $n \geq 1$. For each $\alpha \geq 0$ let $\mathsf{A}_\alpha$ be the pair of matrices defined by \eqref{hmstmat}, and define a real number $\alpha_* \in (0,1]$ by
\begin{equation}\label{eq:alphastar}
\alpha_*:=\lim_{n \to \infty}
\left(\frac{\tau_n^{F_{n+1}}}{\tau_{n+1}^{F_n}}\right)^{(-1)^n}= \prod_{n=0}^\infty \left(1-\frac{\tau_{n-2}}{\tau_{n-1} \tau_n}\right)^{(-1)^{n+1} F_n}.
\end{equation}
Then this infinite product converges unconditionally, and $\mathsf{A}_{\alpha_*}$ does not have the finiteness property. The numerical value of the constant $\alpha_*$ is
\[\alpha_* \simeq 0.74932654633036755794396194809\ldots\]
\end{theorem}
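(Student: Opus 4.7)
The plan is to deduce Theorem~\ref{counter} from Theorem~\ref{exact-irrational} applied to the specific quadratic irrational $\gamma \in (0,1)$ whose continued fraction coefficients are eventually equal to $1$, so that the convergent denominators $q_n$ become shifted Fibonacci numbers. First I would check that the pair $\mathsf{A}_\alpha$ of \eqref{hmstmat} satisfies both the technical and Sturmian hypotheses of Definition~\ref{leppard}; this is the content of \cite{HMST} and is already recorded in the paragraph following Definition~\ref{leppard}. Theorem~\ref{exact-irrational} then identifies $\mathfrak{r}^{-1}(\gamma) = \{\alpha_\gamma\}$ and expresses $\alpha_\gamma$ as the unconditionally convergent infinite product in \eqref{eq:explic}, in terms of the standard words $s_n$ and the spectral radii $\rho_n = \rho(\mathcal{A}(s_n))$.

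The key translation is to rewrite this formula purely in terms of integer traces. Since $A_0, A_1 \in SL_2(\mathbb{Z})$ at $\alpha=1$, every $M_n := \mathcal{A}(s_n)$ is a unimodular integer matrix. Because $a_{k+1}=1$ for all large $k$, the recurrence $s_{n+1} = s_n^{a_{n+1}} s_{n-1}$ collapses to $s_{n+1} = s_n s_{n-1}$, and $\mathcal{A}(uv) = \mathcal{A}(v)\mathcal{A}(u)$ converts this into $M_{n+1} = M_{n-1} M_n$. Applying the Fricke trace identity $\tr(XY) + \tr(XY^{-1}) = \tr X \cdot \tr Y$ with $X = M_{n-1}$, $Y = M_n$, and using $M_n = M_{n-2} M_{n-1}$ (so that $M_{n-1} M_n^{-1} = M_{n-2}^{-1}$ and hence $\tr(M_{n-1} M_n^{-1}) = \tr M_{n-2}$), one obtains the integer recurrence $\tr M_{n+1} = \tr M_{n-1} \cdot \tr M_n - \tr M_{n-2}$. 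Checking the initial values $\tr A_0 = \tr A_1 = 2$, $\tr(A_0 A_1) = 3$, and $\tr(A_1 A_0^{-1}) = 1$ reproduces the seeds $\tau_{-2} = 1$, $\tau_{-1} = \tau_0 = 2$, $\tau_1 = 3$ of \eqref{eq:taun}, so $\tau_n = \tr M_{n+c}$ for an explicit fixed shift $c$.

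The third step passes from spectral radii to traces in the product. Each $M_n$ lies in $SL_2(\mathbb{R})$ with $\tau_n = \rho_n + \rho_n^{-1}$, so $\log \rho_n - \log \tau_n = O(\rho_n^{-2})$. The recurrence forces $\tau_n$ (and hence $\rho_n$) to grow doubly exponentially, while the exponents $F_n$ appearing in \eqref{eq:alphastar} grow only at rate $\phi^n$; hence the errors $F_n (\log \rho_n - \log \tau_n)$ are summable and the substitution leaves both the existence and the value of the limit intact. A telescoping rearrangement, parallel to the second equality in \eqref{eq:explic}, converts the limit form into the infinite-product form \eqref{eq:alphastar}: the $n$-th factor is $(1 + O(\tau_n^{-2}))^{\pm F_n} = 1 + O(F_n/\tau_n^{2})$, which is summable for the same reason, giving unconditional convergence. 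The quoted numerical value then follows from truncating the product and bounding the super-exponentially decaying tail.

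The final and shortest step is the failure of the finiteness property. By Theorem~\ref{exact-irrational} we have $\mathfrak{r}(\alpha_*) = \gamma \notin \mathbb{Q}$. If there existed a finite word $u$ with $\rho(\mathcal{A}_{\alpha_*}(u))^{1/|u|} = \varrho(\alpha_*)$, then Definition~\ref{leppard}(v) forces $u$ to be cyclically balanced, so $u^\infty$ is a periodic Sturmian sequence with rational slope $\varsigma(u)$; Definition~\ref{leppard}(iv) then yields $\varsigma(u) = \mathfrak{r}(\alpha_*) = \gamma$, contradicting the irrationality of $\gamma$. The main obstacle in implementing this plan is the bookkeeping needed to align the index conventions of Theorem~\ref{exact-irrational} (spectral radii, convergent denominators) with those of \eqref{eq:alphastar} (integer traces, Fibonacci numbers) under the correct shift, together with the quantitative control required to replace $\rho_n$ by $\tau_n$ uniformly and to establish unconditional convergence of the resulting oscillating infinite product.
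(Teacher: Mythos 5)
Your overall strategy is sound, but it is worth being clear that this theorem is not proved in the present paper at all: it is quoted from \cite{HMST}, where it was established directly (and before Theorem~\ref{exact-irrational} existed) by a hands-on analysis of the Fibonacci case. What you propose is a re-derivation of the quoted result from the machinery of the present paper, and in fact it coincides with the route the paper itself sketches in \S\ref{sec:expl}: apply Theorem~\ref{exact-irrational} to $\gamma_*=(3-\sqrt5)/2=[2,1,1,1,\dots]$, then pass from spectral radii to traces. Where the paper justifies the substitution $\rho_n\mapsto\tau_n$ via the hypothesis $q_n=O(\rho_{n-1})$ (automatic here since $\gamma_*$ is quadratic, hence non-Liouville, Lemma~\ref{lem:nonliouville}) and invokes the recurrence \eqref{eq:taun} as a known fact, you derive the recurrence cleanly from the Fricke identity $\tr(XY)+\tr(XY^{-1})=\tr X\,\tr Y$ using $\mathcal{A}(uv)=\mathcal{A}(v)\mathcal{A}(u)$ and $\det=1$; that part is correct and is a nice self-contained replacement for the citation. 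Your summability argument ($\tau_n=\rho_n+\rho_n^{-1}$, $\log\tau_n-\log\rho_n=O(\rho_n^{-2})$, with $\log\rho_n$ growing like a Fibonacci sequence while the exponents grow only like $\phi^n$) is the right quantitative input both for the equality of the two products and for the unconditional convergence claim, which Theorem~\ref{exact-irrational} by itself does not supply (the paper explicitly leaves unconditional convergence open in general).

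Two points need more care than your sketch gives them. First, the index alignment you defer as ``bookkeeping'' is the only genuinely delicate step and cannot be waved through: with the conventions of \S\ref{sec:expl} one has $q_n=F_{n+1}$ for $\gamma_*$ (not $F_n$), the seed of your trace sequence is $\tau_n=\tr\mathcal{A}(s_n)$ with $\tau_{-1}=\tau_0=2$, $\tau_1=3$, and the exponents appearing in \eqref{eq:explic} are $q_{n+1},q_n$ while those in \eqref{eq:alphastar} are $F_{n+1},F_n$; a shift of index (and a check of the boundary factor, $1/\rho(A_1)$ versus $\tau_{-1}^{F_0}$) is required before the two expressions literally coincide, and until that shift is pinned down and verified numerically against $\alpha_*\simeq 0.7493\dots$ the proof is not complete. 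Second, in the finiteness-property argument, Definition~\ref{leppard}(iv) applies to weakly extremal sequences, so before invoking it you must observe that $\rho(\mathcal{A}_{\alpha_*}(u))=\varrho(\alpha_*)^{|u|}$ forces $u^\infty$ to be weakly extremal (Gelfand's formula along multiples of $|u|$, plus boundedness of the finitely many remainder blocks since the matrices are invertible); alternatively you can bypass (iv) entirely by combining Proposition~\ref{sfunction}(ii) and (iii), which give $\varsigma(u)=\mathfrak{r}(\alpha_*)=\gamma_*$ directly for a cyclically balanced $u$ attaining the joint spectral radius, contradicting irrationality. With these two items discharged, your argument is a valid alternative proof of the quoted theorem.
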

Here $\alpha_*$ is in fact the unique positive real number such that $\mathfrak r(\alpha_*)=\left(3-\sqrt5\right)/2$. This particular constant was studied because $\gamma_*:=(3-\sqrt5)/2$ has a particularly simple continued fraction expansion: we have $\gamma_*=[2,1,1,1,1,\dots]$, which is the simplest possible expansion of an element of $(0,1/2)\setminus\mathbb{Q}$.

Now that Theorem~\ref{main} has been proved, the proof of Theorem~\ref{exact-irrational} may be obtained in a manner essentially similar to the proof of Theorem~\ref{counter}:
\begin{proof}[Proof of Theorem~\ref{exact-irrational}] Let $\gamma$, $(q_n)$, $(s_n)$ and $(\rho_n)$ be as in the statement of the theorem, and let $\mathfrak{r}$ be as in Definition~\ref{leppard}. An inductive argument as used in \S4 shows that $|s_n|_1=p_n$ and $|s_n|=q_n$ for every $n \geq 1$. In particular we have $\varsigma(s_n)=p_n/q_n$ for all $n \geq 1$ and therefore $q_n^{-1}\log \rho_n = S(p_n/q_n)$ for every positive integer $n$ by Proposition~\ref{sfunction}. By Theorem~\ref{main}(i) the function $\mathfrak{r}$ is continuous, monotone non-decreasing, and satisfies $\mathfrak{r}((0,+\infty)) \supseteq (0,1)$. In particular, $\mathfrak{r}^{-1}(\gamma)$ is nonempty, and is either a point or a closed interval. A consequence of Theorem~\ref{main}(iii) is that $\mathfrak{r}^{-1}(\gamma)$ has empty interior, and we conclude that there is a unique point $\alpha_\gamma \in (0,\infty)$ which satisfies $\mathfrak{r}(\alpha_\gamma)=\gamma$. It follows via Proposition~\ref{sfunction} that $-\log\alpha_\gamma \in \mathbb{R}$ is the unique subderivative of $S$ at $\gamma$, and hence $S$ is differentiable at $\gamma$ with $S'(\gamma)=-\log\alpha_\gamma$. We may therefore calculate
\begin{align*}
S'(\gamma)&=\lim_{n \to \infty}\frac{S\left(\frac{p_{n+1}}
{q_{n+1}}\right)-S\left(\frac{p_n}
{q_n}\right)}{\frac{p_{n+1}}
{q_{n+1}}-\frac{p_n}{q_n}}\\
&=\lim_{n \to \infty}\frac{\frac{1}{q_{n+1}}
\log\rho_{n+1}-\frac{1}{q_n}\log
\rho_n}{\frac{p_{n+1}}
{q_{n+1}}-\frac{p_n}{q_n}}\\
&=\lim_{n \to \infty}\frac{q_n \log\rho_{n+1}-q_{n+1}\log
\rho_n}{q_np_{n+1}-
q_{n+1}p_n}\\
&=\lim_{n \to
\infty}(-1)^n(q_n \log\rho_{n+1}-q_{n+1}\log
\rho_n),
\end{align*}
the existence of all of these limits being guaranteed by the differentiability of $S$ at $\gamma$. By rearranging we obtain
\[
\alpha_\gamma=\mathfrak r^{-1}(\ga)= e^{-S'(\gamma)} =  \lim_{n\to\infty}\left(\frac{\rho_n^{q_{n+1}}}
{\rho_{n+1}^{q_n}}\right)^{(-1)^n}
\]
as claimed. To derive the product expression for $\alpha_\gamma$ let us define
\[
\alpha_n :=\left(\frac{\rho_n^{q_{n+1}}}
{\rho_{n+1}^{q_n}}\right)^{(-1)^n}
\]
for each $n \geq -1$, and observe that
\[\frac{\alpha_n}{\alpha_{n-1}} = \left(\frac{\rho_n^{q_{n+1}}\rho_{n-1}^{q_n}}{\rho_{n+1}^{q_n} \rho_n^{q_{n-1}}}\right)^{(-1)^n} =\left(\frac{\rho_n^{a_{n+1}q_n}\rho_{n-1}^{q_n}}{\rho_{n+1}^{q_n}}
\right)^{(-1)^n} = \left(\frac{\rho_n^{a_{n+1}}\rho_{n-1}}{\rho_{n+1}}
\right)^{(-1)^nq_n}
\]
for each $n \geq 0$, where we have used the relation $q_{n+1}=a_{n+1}q_n+q_{n-1}$. We also have \[\alpha_{-1}=\frac{\rho_0^{q_{-1}}}{\rho_{-1}^{q_0}} = \frac{\rho(A_0)^0}{\rho(A_1)^1}=\frac{1}{\rho(A_1)}.\]
Hence
\begin{align*}\alpha_\gamma = \lim_{N \to \infty} \alpha_N &= \lim_{N \to \infty} \alpha_{-1} \prod_{n=0}^N \frac{\alpha_n}{\alpha_{n-1}}\\
& = \lim_{N \to \infty} \frac{1}{\rho(A_1)} \prod_{n=0}^N  \left(\frac{\rho_n^{a_{n+1}}\rho_{n-1}}{\rho_{n+1}}
\right)^{(-1)^nq_n}\\
&=  \frac{1}{\rho(A_1)} \prod_{n=0}^\infty \left(\frac{\rho_n^{a_{n+1}}\rho_{n-1}}{\rho_{n+1}}
\right)^{(-1)^nq_n}\end{align*}
as claimed. The proof is complete.\end{proof}

For the remainder of the section we let $\{\mathsf{A}_\alpha \colon \alpha \geq 0\}$ be the specific family of matrices defined by \eqref{hmstmat}. In this case we have $\rho(A_1)=1$, which means that the term $1/\rho(A_1)$ may be removed from the infinite product formula in Theorem~\ref{exact-irrational}. Let $\gamma \in (0,1)$ with infinite continued fraction expansion given by $\gamma=[a_1,a_2,a_3,\dots]$, and let $p_n/q_n$ be the $n$th convergent of $\gamma$. In view of the identity $A_0^T=A_1$, by replacing $\gamma$ with $1-\gamma$ and $\alpha$ with $1/\alpha$ if necessary, we will assume without loss of generality that $\alpha\in(0,1)$ and $\gamma\in(0,1/2)$, which is equivalent to $a_1 \geq 2$.

Let us consider the sequence of words specified by $\gamma$ given by $s_{-1}=1, s_0=0$, $s_1:=s_0^{a_1-1}s_{-1}$ and $s_{n+1}=s_n^{a_{n+1}}s_{n-1}$ for all $n\ge1$. Since $s_n$ prefixes $s_{n+1}$ for every $n$, it follows that $s_n$ prefixes $s_k$ for every $k \geq n$. Since furthermore the lengths $|s_n|=q_n$ tend to infinity, it follows that there is a unique infinite word $s_\infty \in \Sigma_2$  which is prefixed by every $s_n$. In particular this word is balanced, and it is recurrent: for each $n \geq 0$ the prefix $s_{n-1}$ occurs in at least two distinct locations in the prefix $s_{n+1}$, hence at least four distinct locations in the prefix $s_{n+3}$, and so forth, so that every subword of $s_\infty$ recurs in infinitely many positions.

Since $\varsigma(s_n)=p_n/q_n$, we have $\varsigma(s_n) \to \gamma$ as $n \to \infty$ and using Theorem~\ref{Sturm} it follows that $s_\infty \in X_\gamma$.  For $\gamma=\gamma_*$ the word $s_\infty$ is none other than the Fibonacci word $010010101001\dots$, which is the fixed point of the substitution $0\to01, 1\to0$ (see \cite{PF,Lot}).

Define $B_n=\mathcal A(s_n)$ for each $n \geq -1$. We have $B_{-1}=A_1, B_0=A_0, B_1=A_0^{a_1-1}A_1$, and
\begin{equation}\label{eq:rec}
B_{n+1}=B_n^{a_{n+1}}B_{n-1},\quad n\ge1.
\end{equation}
Put $\tau_n=\tr B_n$ and $\rho_n=\rho(B_n)$ as before. Note that since $\det B_n \equiv 1$ we have $\tau_n =\rho_n+\rho_n^{-1}$ and conversely $\rho_n = \frac{1}{2}(\tau_n + \sqrt{\tau_n^2-4})$. In particular $\tau_n\sim\rho_n$ as $n\to\infty$.
Subject to the above hypotheses we will prove the following rigorous estimate for the error in approximating $\alpha_\gamma$ by a partial product:

\begin{proposition}\label{rig}
Suppose there exists a constant $L>0$ and an integer $n_0 \geq 3$ such that
\begin{equation}\label{ineq:qn}
q_n \leq  L \rho_{n-1}\ \text{for all}\ n > n_0.
\end{equation}
Then for every $N \geq n_0$ there holds the inequality
\begin{equation}\label{bnd}\left|\log \alpha_\gamma - \log \left(\prod_{n=0}^N \left(\frac{\rho_n^{a_{n+1}}\rho_{n-1}} {\rho_{n+1}}\right)^{(-1)^nq_n}\right)\right| \leq \frac{2LC_0}{\rho_{N}},\end{equation}
where $C_0:=16(a_1+1)(a_1+2)+1$.
\end{proposition}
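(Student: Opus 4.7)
The natural plan is to use the telescoping identity already derived in the proof of Theorem~\ref{exact-irrational}. Since $\rho(A_1)=1$ for the specific matrices in~\eqref{hmstmat}, the partial product appearing in the statement equals $\alpha_N:=\left(\rho_N^{q_{N+1}}/\rho_{N+1}^{q_N}\right)^{(-1)^N}$, and the ratio computation in Section~\ref{sec:expl} gives
\[
\log\alpha_N-\log\alpha_{N-1}=(-1)^N q_N\beta_N,\qquad \beta_n:=a_{n+1}\log\rho_n+\log\rho_{n-1}-\log\rho_{n+1}.
\]
Since $\alpha_n\to\alpha_\gamma$ by Theorem~\ref{exact-irrational}, this telescopes to
\[
\log\alpha_\gamma-\log\alpha_N=\sum_{n=N+1}^\infty(-1)^n q_n\beta_n,
\]
so the task reduces to bounding the tail of this series in absolute value by $2LC_0/\rho_N$.

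The heart of the proof is an estimate of the form $|\beta_n|\le C_1(a_1)/\rho_n^2$ for $n\ge n_0$, where $C_1$ depends only on $a_1$. To derive this I would exploit the recurrence $B_{n+1}=B_n^{a_{n+1}}B_{n-1}$ together with the spectral decomposition $B_n=\rho_n P_n+\rho_n^{-1}Q_n$, which is legitimate because $\det B_n=1$ and $\tau_n>2$ for $n\ge n_0$. Taking traces of $B_{n+1}=B_n^{a_{n+1}}B_{n-1}$ yields
\[
\tau_{n+1}=\pi_n\rho_n^{a_{n+1}}+\omega_n\rho_n^{-a_{n+1}},\qquad \pi_n:=\tr(P_nB_{n-1}),\ \ \omega_n:=\tr(Q_nB_{n-1}),
\]
and $\pi_n+\omega_n=\tau_{n-1}$. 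The crucial approximation is $\pi_n=\rho_{n-1}+O(\rho_n^{-2})$, which I would prove via a projective-contraction argument: the Perron direction of $B_n$ aligns with that of $B_{n-1}$ up to an angle of order $\rho_n^{-2}$, since $B_n=B_{n-1}^{a_n}B_{n-2}$ and the factor $B_{n-1}^{a_n}$ contracts transverse directions by at least $\rho_{n-1}^{2a_n}\ge\rho_n^2$. Combined with the elementary identity $\log\rho_{n+1}=\log\tau_{n+1}+O(\tau_{n+1}^{-2})$ coming from $\det B_{n+1}=1$, this produces the required control on $\beta_n=\log(\rho_{n-1}/\pi_n)+O(\rho_n^{-2})$.

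Combining this key estimate with the hypothesis $q_n\le L\rho_{n-1}$ yields $q_n|\beta_n|\le LC_1\rho_{n-1}/\rho_n^2\le LC_1/\rho_n$. Because the sequence $(\rho_n)$ grows at least geometrically (from the recurrence one easily shows $\rho_n\ge 2\rho_{n-1}$ for $n\ge n_0$ once $\rho_1>1$), the tail $\sum_{n>N}1/\rho_n$ is controlled by its first term up to a factor of $2$, producing a bound of the form $2LC_1'/\rho_N$. To extract the explicit constant $C_0=16(a_1+1)(a_1+2)+1$ one must track the implicit constants in the approximation $\pi_n\approx\rho_{n-1}$ all the way back to the initial data: $\rho_1$, $\tau_1$, and the entries of $B_1=A_0^{a_1-1}A_1$ are all bounded polynomially in $a_1$ (specifically $\rho_1\le a_1+1$ and $\|B_1\|\le a_1+1$), and these initial bounds propagate through the projective-contraction argument to give the stated polynomial dependence. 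The main obstacle is precisely this quantitative projective-alignment step: making the bound $\pi_n=\rho_{n-1}+O(\rho_n^{-2})$ sharp and explicit enough, with constants genuinely controlled by $a_1$ alone, so that the final bookkeeping yields exactly the value of $C_0$ asserted; the remainder of the argument is routine.
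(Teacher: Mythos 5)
Your outer framework — telescoping the partial products so that the error is the tail sum $\sum_{n>N}(-1)^nq_n\beta_n$, bounding each $|\beta_n|$ by a term of size $\rho^{-2}$, and then summing via $q_n\le L\rho_{n-1}$ and the geometric growth $\rho_{n+1}\ge 2\rho_n$ — is exactly the paper's (Lemma~\ref{ghu} plus the displayed chain of inequalities in the proof of Proposition~\ref{rig}). The genuine gap is the step you yourself call the heart: the estimate $|\beta_n|\le C_1(a_1)/\rho_n^2$. First, your justification rests on the inequality $\rho_{n-1}^{2a_n}\ge\rho_n^2$, which is false: since $\rho_n\sim\rho_{n-1}^{a_n}\rho_{n-2}$ (Lemma~\ref{lem:sec8}), one has $\rho_{n-1}^{2a_n}\approx\rho_n^2/\rho_{n-2}^2\ll\rho_n^2$ (already $\rho_3^2\approx 98$ versus $\rho_4^2\approx 1367$ in the Fibonacci case). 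A single application of the projective contraction of $B_{n-1}^{a_n}$ therefore only aligns the leading \emph{column} directions of $B_n$ and $B_{n-1}$ to within $O(\rho_{n-2}^2/\rho_n^2)=O(\rho_{n-1}^{-2})$, i.e.\ at best the paper's bound \eqref{eq:ineqmain}, not your claimed $O(\rho_n^{-2})$; obtaining $O(\rho_n^{-2})$ would require a recursive argument using that the direction fed into $B_{n-1}^{a_n}$ is itself already nearly aligned, which you do not make. Second, $\pi_n=\tr(P_nB_{n-1})$ involves the full spectral projection, hence both the right and the left Perron eigenvectors, and the \emph{left} Perron directions of $B_n$ and $B_{n-1}$ do not align at all: they stay a bounded angle apart (in the Fibonacci case the odd- and even-indexed left eigenvectors converge to two distinct limits). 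What saves the estimate is that $\tr(P_nB_{n-1})-\rho_{n-1}$ is controlled by the \emph{product} of the two misalignment angles, so the bounded left-angle is harmless, but your sketch ("the Perron direction aligns, hence $\pi_n=\rho_{n-1}+O(\rho_n^{-2})$") neither isolates this product structure nor survives the loss in the first point.

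Beyond this, the proposition asserts the bound with the specific constant $C_0=16(a_1+1)(a_1+2)+1$, and you explicitly defer the constant-tracking as "the main obstacle." That bookkeeping is precisely where the paper's proof lives: Lemma~\ref{lem:sec8} is proved not by a soft projective-contraction argument but by introducing the auxiliary continued fraction $\beta$, identifying the entries of $B_n$ with its convergents $P_k,Q_k$ via \eqref{eq:Bn}, and using the elementary diophantine bounds $Q_k/Q_{k-1}\le a_1+1$, $|\beta-P_k/Q_k|\le 1/(Q_kQ_{k+1})$ and $\rho_n<2Q_{u_n}$ to get $|\xi_n-\beta|\le 4(a_1+1)(a_1+2)/\rho_n^2$ and then $|1-\rho_{n+1}/(\rho_n^{a_{n+1}}\rho_{n-1})|\le C_0/\rho_{n-1}^2$. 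Note also that the weaker exponent $\rho_{n-1}^{-2}$ is perfectly sufficient for the final summation (the hypothesis $q_n\le L\rho_{n-1}$ turns it into $1/\rho_{n-1}$, which is summed geometrically), so chasing $\rho_n^{-2}$ buys nothing. As it stands, your proposal reproduces the routine outer layer but does not establish the key lemma, and the inequality on which its proposed proof hinges fails.
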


\begin{remark}
The assumption (\ref{ineq:qn}) is very weak. In particular, it holds for any non-Liouville $\gamma$ -- see Lemma~\ref{lem:nonliouville} below.
\end{remark}

In the special case where the continued fraction coefficients of $\gamma$ are bounded, Proposition~\ref{rig} lends itself to particularly easy verification. We have:

\begin{corollary}\label{rig2}
Suppose there exist integers $K \geq 2$ and $n_0 \geq 3$ and a constant $L>0$ such that the inequalities $q_{n_0+1} \leq L\rho_{n_0}$ and $\sup \{a_k \colon k \geq 2+n_0\} \leq K-1$ are satisfied, and such that the matrix $B_{n_0-1}-K\cdot I$ is non-negative, where $I$ denotes the identity. Then \eqref{bnd} holds for every $N \geq n_0$.
\end{corollary}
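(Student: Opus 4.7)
My plan is to verify the hypothesis $q_n \leq L\rho_{n-1}$ of Proposition~\ref{rig} by induction on $n \geq n_0 + 1$, taking the base case as given. The inductive step will rest on the lower bound $\rho_n \geq K\rho_{n-1}$ for $n \geq n_0 + 1$, which I will deduce from an entrywise matrix inequality $B_m \geq KI$ for all $m \geq n_0 - 1$, propagated along the recursion \eqref{eq:rec} from the seed hypothesis $B_{n_0-1} \geq KI$.

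As a preliminary, every product of the matrices $A_0$ and $A_1$ from \eqref{hmstmat} is a non-negative $2\times 2$ matrix whose diagonal entries are at least $1$; in particular $B_n \geq I$ entrywise for every $n \geq -1$. This is a trivial induction on word length, since left multiplication by either $A_0$ or $A_1$ preserves the property ``non-negative with diagonal entries $\geq 1$''. A consequence is that any such $B$ and any $k \geq 1$ satisfy $B^k \geq B \geq I$ entrywise, because $B^{k+1} - B^k = B^k(B-I)$ is a product of entrywise non-negative matrices. Now I prove $B_m \geq KI$ entrywise for all $m \geq n_0 - 1$ by induction on $m$. The case $m = n_0 - 1$ is the hypothesis of the corollary. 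For the inductive step producing $B_{m+1}$ with $m \geq n_0$, both $B_{m-1}$ and $B_m$ satisfy $B \geq KI \geq I$, and the preliminary observation then gives $B_m^{a_{m+1}} \geq I$ entrywise, so
\[
B_{m+1} = B_m^{a_{m+1}} B_{m-1} \geq B_m^{a_{m+1}} \cdot KI = K B_m^{a_{m+1}} \geq KI,
\]
the first inequality using $B_{m-1} - KI \geq 0$ and $B_m^{a_{m+1}} \geq 0$ entrywise. The initial step producing $B_{n_0}$ is handled similarly: only $B_{n_0-1} \geq KI$ is available from the hypothesis, but since $B_{n_0-1} \geq I$ implies $B_{n_0-1}^{a_{n_0}} \geq B_{n_0-1} \geq KI$, and $B_{n_0-2} \geq I$ by the preliminary, one concludes $B_{n_0} = B_{n_0-1}^{a_{n_0}} B_{n_0-2} \geq (KI) B_{n_0-2} \geq KI$.

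Running the same chain of entrywise inequalities for $n \geq n_0 + 1$ gives $B_n = B_{n-1}^{a_n} B_{n-2} \geq K B_{n-1}^{a_n}$ entrywise, so by the monotonicity of the spectral radius on non-negative matrices one has $\rho_n \geq K \rho_{n-1}^{a_n} \geq K\rho_{n-1}$. The main inductive step is then immediate: assuming $q_n \leq L\rho_{n-1}$ with $n \geq n_0 + 1$, and using $a_{n+1} \leq K-1$ (valid since $n+1 \geq n_0 + 2$) together with the trivial bound $q_{n-1} \leq q_n$,
\[
q_{n+1} = a_{n+1} q_n + q_{n-1} \leq (K-1)q_n + q_n = Kq_n \leq KL\rho_{n-1} \leq L\rho_n,
\]
and Proposition~\ref{rig} then yields \eqref{bnd}. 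The most delicate point is the propagation of $B_m \geq KI$ across the two-term recursion, which works because the seed bound $\geq KI$ feeds the factor $B_{m-1}$ while the preliminary bound $\geq I$ controls the power $B_m^{a_{m+1}}$.
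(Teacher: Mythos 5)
Your proof is correct and follows essentially the same route as the paper: you verify hypothesis \eqref{ineq:qn} of Proposition~\ref{rig} by combining $q_{n+1}\le Kq_n$ (from $a_k\le K-1$ for $k\ge n_0+2$) with the growth bound $\rho_n\ge K\rho_{n-1}$, propagated from the seed inequality $B_{n_0-1}\ge K\cdot I$ along the recursion \eqref{eq:rec}, and then apply Proposition~\ref{rig}. The only cosmetic difference is that you re-derive $\rho_n\ge K\rho_{n-1}$ via entrywise monotonicity of the spectral radius for non-negative matrices, whereas the paper simply invokes Lemma~\ref{ghu} (whose trace argument serves exactly this purpose).
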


Since the spectral radii $\rho_n$ grow super-exponentially as a function of $n$ (see Lemma~\ref{lem:sec8} below), this allows very exact estimates to be made using relatively few terms. In order to prove the proposition and its corollary we require two lemmas. The following result is the technical core of the proof:

\begin{lemma}\label{lem:sec8}
The inequality
\begin{equation}\label{eq:ineqmain}
\left|1-\frac{\rho_{n+1}}{\rho_n^{a_{n+1}}\rho_{n-1}}\right| \le
\frac{C_0}{\rho_{n-1}^2}
\end{equation}
holds for all $n \geq 1$, where $C_0$ is as in Proposition~\ref{rig}. In particular, $\rho_{n+1}\sim \rho_n^{a_{n+1}}\rho_{n-1}$ as $n\to\infty$.
\end{lemma}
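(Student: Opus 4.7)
The plan is to use the spectral decomposition of $B_n$ to isolate the dominant contribution to $\rho_{n+1}$, thereby reducing the lemma to a quantitative statement about how closely the Perron projections of consecutive matrices $B_n$ align. Since $\det B_n=1$ and $B_n$ is a positive primitive matrix for each $n\ge 1$ (by the technical hypotheses), its eigenvalues are $\rho_n$ and $\rho_n^{-1}$, and it admits the spectral decomposition $B_n=\rho_nP_n+\rho_n^{-1}Q_n$, with $P_n,Q_n$ the rank-one eigen-projections and $P_n+Q_n=I$. Substituting this (and the analogous expression for $B_{n-1}$) into $B_{n+1}=B_n^{a_{n+1}}B_{n-1}$, taking the trace, and simplifying via the trivial identities $\tr(P_nP_{n-1})+\tr(P_nQ_{n-1})=\tr P_n=1$ and their cyclic analogues, I obtain
\[
\tau_{n+1}=\rho_n^{a_{n+1}}\rho_{n-1}+\rho_n^{-a_{n+1}}\rho_{n-1}^{-1}-\eta_n\bigl(\rho_n^{a_{n+1}}-\rho_n^{-a_{n+1}}\bigr)\bigl(\rho_{n-1}-\rho_{n-1}^{-1}\bigr),
\]
where $\eta_n:=1-\tr(P_nP_{n-1})$. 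Since $\det B_{n+1}=1$ forces $\rho_{n+1}=\tau_{n+1}-\rho_{n+1}^{-1}$ with $\rho_{n+1}^{-1}$ negligible next to $\rho_n^{a_{n+1}}\rho_{n-1}$, the lemma reduces to showing $|\eta_n|\le C/\rho_{n-1}^2$ with an explicit constant.

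To bound $\eta_n$, I would work in the basis $\{v_{n-1},f_{n-1}\}$ of right eigenvectors of $B_{n-1}$, in which $B_{n-1}$ is diagonal with entries $\rho_{n-1}$ and $\rho_{n-1}^{-1}$. Writing the right eigenvectors of $B_n$ as $v_n=av_{n-1}+bf_{n-1}$ and $f_n=cv_{n-1}+df_{n-1}$, a short projective computation gives $\tr(P_nP_{n-1})=ad/(ad-bc)$, so $|\eta_n|$ is essentially $|b/a|\cdot|c/d|$. The eigenvalue identities $B_nv_n=\rho_nv_n$ and $B_nf_n=\rho_n^{-1}f_n$, rewritten via $B_n=B_{n-1}^{a_n}B_{n-2}$ in the same basis (where $B_{n-1}^{a_n}$ is diagonal with entries $\rho_{n-1}^{\pm a_n}$), turn into fixed-point equations for the two slopes. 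Solving these explicitly yields $|b/a|=O(\rho_{n-1}^{-2a_n})$, while $|c/d|$ is only $O(1)$ (since the ``small'' eigenvector $f_n$ of $B_n$ tracks $B_{n-2}^{-1}f_{n-1}$ rather than $f_{n-1}$); because $a_n\ge 1$, the product gives $|\eta_n|\le C/\rho_{n-1}^{2a_n}\le C/\rho_{n-1}^2$.

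The argument is inductive in $n$: the constants hidden in the $O(\cdot)$ notation above involve the $(1,1)$- and off-diagonal entries of $B_{n-2}$ in the basis $\{v_{n-1},f_{n-1}\}$, and the former equals $T_{n-1}:=\tr(P_{n-1}B_{n-2})$, which satisfies $T_{n-1}=\rho_{n-2}+O(1/\rho_{n-2})$ by the corresponding assertion at the previous index; the remaining entries are then pinned down by $\tr B_{n-2}=\tau_{n-2}$ and $\det B_{n-2}=1$. Substituting $|\eta_n|\le C/\rho_{n-1}^2$ into the trace formula of the first paragraph and translating from $\tau_{n+1}$ back to $\rho_{n+1}$ produces the claim. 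The main obstacle is maintaining the constant uniformly in $n$ so that the final bound has the specific form $C_0=16(a_1+1)(a_1+2)+1$ rather than a value growing with $n$; this requires a separate, direct treatment of the base case (small $n$), where the explicit forms $B_{-1}=A_1$, $B_0=A_0$, $B_1=A_0^{a_1-1}A_1$ are used to compute the initial entries, and the polynomial dependence of these on $a_1$ is precisely what produces the explicit expression for $C_0$.
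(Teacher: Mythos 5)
Your route is genuinely different from the paper's: the paper exploits the special form of the matrices \eqref{hmstmat} to identify each $B_n$ with a matrix of convergents of an auxiliary continued fraction $\beta=[d_1,1,d_2,1,\dots]$, and draws its quantitative input from the classical inequality $|\beta-P_k/Q_k|\le 1/(Q_kQ_{k+1})$ together with the bound $b_k\le a_1$ on the partial quotients of $\beta$ (the bounded-gap property of the Sturmian word $s_\infty$), whereas you argue purely spectrally from the recursion $B_{n+1}=B_n^{a_{n+1}}B_{n-1}$, $\det B_n=1$ and positivity. The algebraic skeleton of your plan is sound: the trace identity involving $\eta_n=1-\tr(P_nP_{n-1})$ is correct, so is $\tr(P_nP_{n-1})=ad/(ad-bc)$, and since the coefficient of $\eta_n$ in that identity is at most $\rho_n^{a_{n+1}}\rho_{n-1}$, the lemma does reduce to an estimate $|\eta_n|\lesssim\rho_{n-1}^{-2}$ up to harmless correction terms.

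There are, however, two genuine gaps. First, the induction does not close in the form you state it. What your slope computation needs at index $n$ is control of the entries of $B_{n-2}$ in the eigenbasis of $B_{n-1}$, i.e.\ of $\tr(P_{n-1}B_{n-2})$, which is equivalent to control of $\eta_{n-1}$ (indeed $\tr(P_{n-1}B_{n-2})-\rho_{n-2}=-\eta_{n-1}(\rho_{n-2}-\rho_{n-2}^{-1})$); this is \emph{not} literally "the corresponding assertion at the previous index", so the induction must be run on $\eta_n$ (or on $\tr(P_nB_{n-1})$) as a strengthened hypothesis, with the lemma extracted afterwards. Moreover the base of that induction is awkward for your method: $B_{-1}=A_1$ and $B_0=A_0$ are unipotent with $\rho_{-1}=\rho_0=1$, hence not diagonalisable, so the spectral decomposition underlying your very first trace identity does not exist when $n=1$, and bounds of the shape $O(1/\rho_{n-2})$ are vacuous at precisely the small indices that drive your constants; "a separate, direct treatment" is promised but not performed. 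Second, and more seriously for the statement as given, the lemma asserts the inequality with the specific constant $C_0=16(a_1+1)(a_1+2)+1$ of Proposition~\ref{rig}, which is then used with that value. In the paper this constant arises from the continued-fraction mechanism ($C_1=4(a_1+1)(a_1+2)$ via $Q_k/Q_{k-1}\le a_1+1$ and two approximation errors, then $C_0=4C_1+1$); in your scheme the constants arise from entirely different quantities (ratios such as $|m_{12}m_{21}|/m_{11}^2$ for $B_{n-2}$ written in the eigenbasis of $B_{n-1}$, propagated through the induction), and you give no argument that these remain below $16(a_1+1)(a_1+2)+1$ uniformly in $n$ and in $a_2,a_3,\dots$; the final sentence of the proposal simply asserts that the base case "produces" this expression. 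Until that uniform explicit bound is actually derived (or the lemma and Proposition~\ref{rig} are restated with whatever constant your argument does yield), the proof is incomplete.
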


\begin{proof}
We first construct an auxiliary continued fraction as follows:
\[
\beta=[d_1,1,d_2,1,d_3,1,\dots],
\]
where $d_1=a_1-1$ and $d_k$ is the number of zeros between the $k$th and $(k+1)$st unities in $s_\infty$ for $k\ge2$. For instance, for $a_1=2$ and $a_k\equiv1$ for $k\ge2$ (i.e., the Fibonacci word $s_\infty$) we have $\beta=[1,1,2,1,1,1,2,\dots]$. We denote
\[
\beta=[b_1,b_2,\dots].
\]
Note that since the number of consecutive zeroes in $s_\infty$ is bounded by $a_1$ (see, e.g., \cite{Lot}), we have $b_k\le a_1$  for all $k$.

Let $u_0=1$, and for each $n \geq 1$ let $u_n$ denote the length of the word constructed
from $s_n$ by replacing every string of consecutive zeros with a single zero. That is, $s_1=0^{a_1-1}1$, whence $u_1=2$;
$s_2=(0^{a_1-1}1)^{a_2}0$, whence $u_2=2a_2+1$, etc. Define also
\[
\frac{P_k}{Q_k}=[b_1,b_2,\dots, b_k].
\]
Recall the following well known relation between matrix products involving powers of $A_0, A_1$, and continued fractions:
\[
A_1^{a_m}A_0^{a_{m-1}}\cdots A_1^{a_1} =
\begin{pmatrix}1&0\\a_m&1 \end{pmatrix}
\begin{pmatrix}1&a_{m-1}\\0&1 \end{pmatrix}
\cdots
\begin{pmatrix}1&0\\a_1&1 \end{pmatrix}
=
\begin{pmatrix}p_m&p_{m-1}\\q_m&q_{m-1} \end{pmatrix}
\]
if $m$ is odd, and
\[
A_1^{a_m}A_0^{a_{m-1}}\cdots A_1^{a_1} =
\begin{pmatrix}1&a_m\\0&1 \end{pmatrix}
\begin{pmatrix}1&0\\a_{m-1}&1 \end{pmatrix}
\cdots
\begin{pmatrix}1&0\\a_1&1 \end{pmatrix}
=
\begin{pmatrix}p_{m-1}&p_m\\q_{m-1}&q_m
\end{pmatrix}
\]
if $m$ is even (see, e.g., \cite{Frame}). Hence we have
\begin{equation}\label{eq:Bn}
B_n = \begin{cases}
\begin{pmatrix} P_{u_n} & P_{u_n-1}\\  Q_{u_n} & Q_{u_n-1}
\end{pmatrix} & u_n\ \text{is odd} \\
\begin{pmatrix} P_{u_n-1} & P_{u_n}\\  Q_{u_n-1} & Q_{u_n}
\end{pmatrix} & u_n\ \text{is even},
\end{cases}
\end{equation}
Let us compute the eigenvectors for $B_n$ of the first type:
\begin{equation}\label{eq:syst}
\begin{pmatrix} P_{u_n} & P_{u_n-1}\\  Q_{u_n} & Q_{u_n-1}
\end{pmatrix}\begin{pmatrix} \xi_n \\ 1\end{pmatrix}=
\la_n \begin{pmatrix} \xi_n \\ 1\end{pmatrix},
\end{equation}
where $\la_n=\rho_n$ or $\rho_n^{-1}$. Solving this system, we get a quadratic equation:
\[
Q_{u_n}\xi_n^2+(Q_{u_n-1}-P_{u_n})\xi_n-P_{u_n-1}=0.
\]
Dividing it by $Q_{u_n}$, we obtain
\begin{equation}\label{eq1}
\xi_n^2+
\left(\frac{Q_{u_n-1}}{Q_{u_n}}-\frac{P_{u_n}}{Q_{u_n}}\right)
\xi_n -\frac{P_{u_n-1}}{Q_{u_n-1}}\cdot\frac{Q_{u_n-1}}{Q_{u_n}}
=0,
\end{equation}
whence
\begin{equation}\label{eq2}
\left(\xi_n+\frac{Q_{u_n-1}}{Q_{u_n}}\right)(\xi_n-\beta)=
\xi_n\left(\frac{P_{u_n}}{Q_{u_n}}-\beta\right) +
\frac{Q_{u_n-1}}{Q_{u_n}}
\left(\frac{P_{u_{n-1}}}{Q_{u_{n-1}}}-\beta\right).
\end{equation}
Let from here on $\xi_n$ stand for the positive root of (\ref{eq1}). From (\ref{eq:syst}) it follows that $Q_{u_n}\xi_n+Q_{u_n-1}=\rho_n<\tau_n=P_{u_n}+Q_{u_n-1}$, whence $\xi_n<\frac{P_{u_n}}{Q_{u_n}}<1$.

Since the $b_k$ are bounded, we have
\begin{equation}\label{ineq:Qk}
\frac{Q_k}{Q_{k-1}}=b_k+\frac1{b_{k+1}+\dots}\le b_k+1\le a_1+1.
\end{equation}
Hence from (\ref{eq2})
\[
|\xi_n-\beta|\le (a_1+1)\cdot\left(\left|\frac{P_{u_n}}{Q_{u_n}}-\beta\right|+ \left|\frac{P_{u_n-1}}{Q_{u_n-1}}-\beta\right|\right).
\]
By (\ref{ineq:Qk}), we have
\begin{equation}\label{eq:laP}
\left|\beta-\frac{P_{u_n}}{Q_{u_n}}\right|\le \frac{1}{Q_{u_n} {Q_{u_n+1}}}\le \frac{1}{Q_{u_n}^2},\quad \left|\beta-\frac{P_{u_n-1}}{Q_{u_n-1}}\right|\le \frac{1}{Q_{u_n} {Q_{u_n-1}}}\le \frac{a_1+1}{Q_{u_n}^2}.
\end{equation}
Hence
\[
|\beta-\xi_n|\le \frac{(a_1+1)(a_1+2)}{Q_{u_n}^2}.
\]
Since $\rho_n<\tau_n=P_{u_n}+Q_{u_n-1}$, we have $\rho_n<2Q_{u_n}$, whence
\begin{equation}\label{eq:est1}
|\beta-\xi_n|\le \frac{C_1}{\rho_n^2},
\end{equation}
where
\[
C_1=4(a_1+1)(a_1+2).
\]
(In the case of even $u_n$, we have $\tau_n = P_{u_n-1}+Q_{u_n}<2Q_{u_n}$, so (\ref{eq:est1}) holds as well.)

Let $\xi_n'<0$ denote the other solution of (\ref{eq1}). Put
\[
D_n = \begin{pmatrix} \xi_n & \xi_n' \\ 1 & 1 \end{pmatrix}.
\]
We have
\[
D_n^{-1}B_nD_n = \begin{pmatrix} \rho_n & 0 \\ 0 & \rho_n^{-1}
\end{pmatrix}.
\]
We want to apply
the change of coordinates given by $D_n$ to the equation
(\ref{eq:rec}) and then obtain a relation for the traces. Since
$\tr(D_n^{-1}B_{n+1}D_n)=\tr B_{n+1}=\tau_{n+1}=\rho_{n+1}+\rho_{n+1}^{-1}$, we will be only concerned with estimating $\tr(D_n^{-1}B_{n-1}D_n)$.

Assume that $u_{n-1}$ is even; then
\[
B_{n-1}=\begin{pmatrix} P_{u_{n-1}-1} & P_{u_{n-1}}\\
Q_{u_{n-1}-1} & Q_{u_{n-1}}
\end{pmatrix}.
\]
(The case of odd $u_{n-1}$ is completely analogous.) We have
\begin{align*}
D_n^{-1}B_{n-1}D_n &= \frac1{\xi_n-\xi_n'}\begin{pmatrix} 1 &
-\xi_n' \\ -1 & \xi_n \end{pmatrix}\begin{pmatrix} P_{u_{n-1}-1} &
P_{u_{n-1}}\\  Q_{u_{n-1}-1} & Q_{u_{n-1}}\end{pmatrix}
\begin{pmatrix} \xi_n & \xi_n' \\ 1 & 1 \end{pmatrix}
\\
&=\begin{pmatrix} \rho_{n-1}-r_{n-1} & \dots \\ \dots & r_{n-1} \end{pmatrix},
\end{align*}
where
\begin{equation}\label{eq:rn-1}
r_{n-1}=\xi_n'(\xi_nQ_{u_{n-1}-1}-P_{u_{n-1}-1})+\xi_n
Q_{u_{n-1}}-P_{u_{n-1}}.
\end{equation}
By (\ref{eq:laP}) and (\ref{eq:est1}),
\begin{align*}
|\xi_n Q_{u_{n-1}}-P_{u_{n-1}}| & \le
|\xi_n-\beta|Q_{u_{n-1}}+ |\beta Q_{u_{n-1}}-P_{u_{n-1}}| \\
&\le \frac{C_1\cdot Q_{u_{n-1}}}{\rho_n^2}+\frac1{Q_{u_{n-1}+1}} \\
&\le \frac{C_1\rho_{n-1}}{\rho_n^2}+\frac{2}{\rho_{n-1}} \\
&\le \frac{2C_1}{\rho_{n-1}},
\end{align*}
in view of $C_1>2, \rho_n>\rho_{n-1}$. Since $ |\beta Q_{u_{n-1}-1}-P_{u_{n-1}-1}|\le Q_{u_{n-1}}^{-1}$, we have the same bound for $|\xi_nQ_{u_{n-1}-1}-P_{u_{n-1}-1}|$, whence from (\ref{eq:rn-1}), in view of $|\xi_n'|<1$,
\begin{equation}\label{eq:rn-bound}
r_{n-1}\le \frac{4C_1}{\rho_{n-1}}.
\end{equation}
By our construction,
\[
D_n^{-1}B_n^{a_{n+1}}D_n=\begin{pmatrix} \rho_n^{a_{n+1}} & 0 \\ 0
& \rho_n^{-a_{n+1}} \end{pmatrix},
\]
whence
\[
D_n^{-1} B_{n+1}D_n = \begin{pmatrix} \rho_n^{a_{n+1}} & 0 \\ 0
& \rho_n^{-a_{n+1}} \end{pmatrix} \begin{pmatrix} \rho_{n-1}-r_{n-1} & \dots \\ \dots & r_{n-1} \end{pmatrix}.
\]
Taking the traces yields
\[
\tau_{n+1}=\rho_n^{a_{n+1}}(\tau_{n-1}-r_{n-1})+
\rho_n^{-a_{n+1}}r_{n-1}.
\]
Using $\tau_n=\rho_n+\rho_n^{-1}$, we obtain
\[
\rho_{n+1}+\rho_{n+1}^{-1}= \rho_n^{a_{n+1}}(\rho_{n-1}+\rho_{n-1}^{-1}-r_{n-1}) + \rho_n^{-a_{n+1}}r_{n-1}.
\]
Therefore,
\[
1-\frac{\rho_{n+1}}{\rho_n^{a_{n+1}}\rho_{n-1}} =\frac{r_{n-1}}{\rho_{n-1}}+ \frac1{\rho_{n+1}\rho_n^{a_{n+1}}\rho_{n-1}} - \frac1{\rho_{n-1}^2}-\frac{r_{n-1}}{\rho_n^{2a_{n+1}}\rho_{n-1}},
\]
whence
\[
1-\frac{\rho_{n+1}}{\rho_n^{a_{n+1}}\rho_{n-1}}\ge - \frac1{\rho_{n-1}^2}-\frac{r_{n-1}}{\rho_n^{2a_{n+1}}\rho_{n-1}} \ge -\frac{C_0}{\rho_{n-1}^2}
\]
(in view of $r_{n-1}/\rho_n^{2a_{n+1}}<1$ and $C_0>2$),
and
\begin{align*}
1-\frac{\rho_{n+1}}{\rho_n^{a_{n+1}}\rho_{n-1}} &\le\frac{r_{n-1}}{\rho_{n-1}}+ \frac1{\rho_{n+1}\rho_n^{a_{n+1}}\rho_{n-1}} \\
&< \frac{4C_1}{\rho_{n-1}^2}+\frac1{\rho_{n-1}^3}\\
&<\frac{4C_1+1}{\rho_{n-1}^2}=\frac{C_0}{\rho_{n-1}^2}.
\end{align*}
\end{proof}

We also require the following lower estimate on the growth of the sequence $(\rho_n)$.

\begin{lemma}\label{ghu}
If $n \geq 1$ and $K \geq 1$ are integers such that the matrix $B_{n-1}-K\cdot I$ is non-negative, then $\rho_{n+1} \geq K \rho_n$.
In particular we have $\rho_{n+1} \geq 2\rho_n$ for all $n \geq 3$.
\end{lemma}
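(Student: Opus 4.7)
The plan is to reduce both assertions to entrywise inequalities between non-negative matrices and then invoke monotonicity of the spectral radius on the cone of non-negative matrices. For the first part, I would write $B_{n-1} = K\cdot I + (B_{n-1} - K\cdot I)$ and substitute into the recurrence \eqref{eq:rec} to obtain
\[
B_{n+1} \;=\; K\, B_n^{a_{n+1}} \;+\; B_n^{a_{n+1}}\bigl(B_{n-1} - K\cdot I\bigr).
\]
Since $B_n$ is a product of the non-negative matrices $A_0$ and $A_1$, the matrix $B_n^{a_{n+1}}$ is non-negative; together with the hypothesis $B_{n-1} - K\cdot I \geq 0$ this shows that both summands above are entrywise non-negative, whence $B_{n+1} \geq K\, B_n^{a_{n+1}}$ entrywise. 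The standard monotonicity of the spectral radius on the non-negative cone (immediate from the Collatz--Wielandt formula, or from Gelfand's formula applied to a monotone matrix norm) then yields $\rho_{n+1} \geq K \rho_n^{a_{n+1}}$. The matrices in \eqref{hmstmat} satisfy $\det A_0 = \det A_1 = 1$, so $\det B_n = 1$; since $B_n$ is non-negative with positive trace, its two eigenvalues are real, positive, and multiply to $1$, forcing $\rho_n \geq 1$. Hence $\rho_n^{a_{n+1}} \geq \rho_n$ and the first claim follows.

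For the ``in particular'' statement it suffices to show that $B_m - 2I \geq 0$ entrywise for every $m \geq 2$, and then apply the first part with $K=2$. The standing assumption $a_1 \geq 2$ guarantees that $s_n$ is neither of the form $0^k$ nor $1^k$ for any $n \geq 1$, so by Definition~\ref{leppard}(ii) the matrix $B_n$ is positive; since its entries are integers, they are all $\geq 1$. For the base case $m=2$ one computes $B_1 = A_0^{a_1-1}A_1 = \bigl(\begin{smallmatrix} a_1 & a_1-1 \\ 1 & 1 \end{smallmatrix}\bigr)$. Entrywise $B_1 \geq I$, so $B_1^{a_2} \geq B_1$ entrywise, and reading off the diagonal of $B_2 = B_1^{a_2} A_0$ gives $(B_2)_{11} = (B_1^{a_2})_{11} \geq a_1 \geq 2$ and $(B_2)_{22} = (B_1^{a_2})_{21} + (B_1^{a_2})_{22} \geq 1 + 1 = 2$, so $B_2 - 2I \geq 0$ as required.

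The inductive step for $m \geq 3$ rests on the elementary componentwise observation that the set of $2\times 2$ matrices having all entries $\geq 1$ and diagonal entries $\geq 2$ is closed under multiplication. Granting by induction that $B_{m-1}$ has this property, so does $B_{m-1}^{a_m}$; multiplying by $B_{m-2}$ (which has all entries $\geq 1$) and using the recurrence $B_m = B_{m-1}^{a_m} B_{m-2}$ yields $B_m - 2I \geq 0$, completing the induction and hence the proof.

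The main obstacle is largely bookkeeping rather than mathematical depth: one must respect the specific order of factors in the recurrence, since the cyclically conjugate products $B_n^{a_{n+1}} B_{n-1}$ and $B_{n-1} B_n^{a_{n+1}}$ agree in spectral radius but are distinct matrices with different diagonal entries, and only the order matching \eqref{eq:rec} delivers the diagonal dominance needed in the base case.
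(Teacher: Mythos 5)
Your proof is correct and follows essentially the same route as the paper: both arguments exploit the recurrence $B_{n+1}=B_n^{a_{n+1}}B_{n-1}$ and entrywise domination by non-negative matrices, differing only in how the entrywise inequality is converted to one on spectral radii (you use monotonicity of $\rho$ on the non-negative cone together with $\rho_n\geq 1$ from $\det B_n=1$, while the paper compares traces and uses $\rho_n=\tfrac12(\tau_n+\sqrt{\tau_n^2-4})$) and in how the bound $B_{m}\geq 2I$ for $m\geq 2$ is verified (your induction on ``entries $\geq 1$, diagonal $\geq 2$'' versus the paper's direct comparison $B_{n-1}\geq B_2\geq A_0A_1A_0\geq 2I$). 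These are cosmetic variations; all the steps you use check out.
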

\begin{proof}
Given a pair of matrices $A$ and $B$ we will use the notation $A \geq B$ to mean that the difference $A-B$ is a non-negative matrix. If $A \geq B$ then obviously also $\tr A \geq \tr B$, and $AC \geq BC$ and $CA \geq CB$ for any non-negative matrix $C$. Note in particular that $A_0,A_1 \geq I$, and hence if $C$ is any product of powers of $A_0$ and $A_1$ then $C \geq I$. It follows that $B_{n+1} \geq B_{n}$ for all $n \geq 0$.

If $n, K \geq 1$ and $B_{n-1} \geq K\cdot I$, then $B_{n+1} = B_n^{a_{n+1}}B_{n-1} \geq KB_n^{a_{n+1}} \geq KB_n$ and therefore $\tau_{n+1} \geq K \tau_n$. It follows that
\[\rho_{n+1} = \frac{1}{2} \left(\tau_{n+1} + \sqrt{(\tau_{n+1})^2 -4}\right) \geq \frac{K}{2}\left(\tau_n + \sqrt{(\tau_n)^2 -\frac{4}{K^2}}\right) \geq K\rho_n\]
as required.
Since $a_1 \geq 2$, we may estimate
\begin{align*}
B_2 &= B_1^{a_2}B_0 = (B_0^{a_1-1}B_{-1})^{a_2}B_0 = (A_0^{a_1-1}A_1)^{a_2} A_0 \\
&\geq A_0A_1A_0 = \begin{pmatrix}2 & 3 \\ 1 & 2\end{pmatrix} \geq 2I,
\end{align*}
and since $B_{n-1} \geq B_2$ for all $n \geq 3$ it follows that $\rho_{n+1} \geq 2\rho_n$ for all $n \geq 3$ as claimed.
\end{proof}
We may now give the proofs of Proposition~\ref{rig} and Corollary~\ref{rig2}.
\begin{proof}[Proof of Proposition~\ref{rig}.]
Let $N \geq n_0$ and define
\[
\alpha_N :=
\prod_{n=0}^N
\left(\frac{\rho_n^{a_{n+1}}\rho_{n-1}}{\rho_{n+1}}
\right)^{(-1)^nq_n}.
\]
Using (\ref{eq:ineqmain}) together with the second clause of Lemma~\ref{ghu},
\begin{align*}
|\log\alpha_\gamma-\log\alpha_N|&=\left|\sum_{n=N+1}^\infty
(-1)^nq_n \log\left(\frac{\rho_n^{a_{n+1}}\rho_{n-1}}{\rho_{n+1}}
\right)\right| \\
&\le \sum_{n=N+1}^\infty q_n \left|\log\left(\frac{\rho_{n+1}}{\rho_n^{a_{n+1}}\rho_{n-1}}
\right)\right| \\
&\le \sum_{n=N+1}^\infty q_n \left|1-\frac{\rho_{n+1}}{\rho_n^{a_{n+1}}\rho_{n-1}}
\right| \\
& \le C_0 \sum_{n=N+1}^\infty \frac{q_n}{\rho_{n-1}^2}\\
& \le LC_0 \sum_{n=N+1}^\infty \frac{1}{\rho_{n-1}}\\
& \le LC_0 \sum_{n=N+1}^\infty \frac{1}{2^{N+1-n}\rho_N}\\
&=\frac{2LC_0}{\rho_N}
\end{align*}
as required.
\end{proof}
\begin{proof}[Proof of Corollary~\ref{rig2}.]
For each $j \geq 1$ we have
\[q_{n_0+1+j} = a_{n_0+1+j} q_{n_0+j} + q_{n_0+j-1} \leq K q_{n_0+j},\]
and it follows that $q_{n_0+1+j} \leq K^j q_{n_0+1}$ for all $j\geq 0$. On the other hand since $B_{n_0-1}-KI$ is non-negative, $B_{n_0+j-2}$ is non-negative for every $j \geq 1$, and using Lemma \ref{ghu} we deduce that $\rho_{n_0+j} \geq K \rho_{n_0+j-1}$ for all such $j$. We therefore have $q_{n_0+1+j} \leq K^j q_{n_0+1} \leq K^jL\rho_{n_0} \leq L\rho_{n_0+j}$ for all $j \geq 0$ and we may apply Proposition~\ref{rig}.
\end{proof}

Let us show that the hypothesis $q_n=O(\rho_{n-1})$ is valid for ``typical'' $\gamma$ in a suitable sense:

\begin{lemma}\label{lem:nonliouville}
If $\ga$ is not Liouville, then $q_n \leq \rho_{n-1}$ for all sufficiently large $n$.
\end{lemma}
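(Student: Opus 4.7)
The plan is to combine two ingredients: a polynomial upper bound on the growth of $q_n$ in terms of $q_{n-1}$, coming from the non-Liouville hypothesis, with an exponential lower bound on $\rho_{n-1}$ in terms of $q_{n-1}$, coming from the positivity of $S(\gamma)$ on $(0,1)$.

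First, since $\gamma$ is not Liouville, I would fix $\mu\ge 2$ and $C_1>0$ with $|\gamma-p/q|\ge C_1 q^{-\mu}$ for every rational $p/q$. Applying this at the convergents $p_n/q_n$ and combining with the standard continued-fraction bound $|\gamma-p_n/q_n|<1/(q_nq_{n+1})$ yields $q_{n+1}\le C_1^{-1}q_n^{\mu-1}$, so that after a harmless index shift
\[
\log q_n \;\le\; (\mu-1)\log q_{n-1} + C_2
\]
for every $n\ge 1$ and some constant $C_2$.

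Second, I would appeal to the fact that for the specific pair (\ref{hmstmat}) we have $\rho(A_0)=\rho(A_1)=1$, so by Proposition~\ref{sfunction} the concave function $S$ satisfies $S(0)=S(1)=0$; Corollary~\ref{strictly-concave} then gives $S(\gamma)>0$ for every $\gamma\in(0,1)$. Since each $s_n$ is cyclically balanced with $\varsigma(s_n)=p_n/q_n$, Proposition~\ref{sfunction}(iii) gives $q_n^{-1}\log\rho_n=S(p_n/q_n)$, and passing to the limit using continuity of $S$ at $\gamma$ produces $q_n^{-1}\log\rho_n\to S(\gamma)>0$. Hence for all sufficiently large $n$,
\[
\log \rho_{n-1}\;\ge\;\tfrac{1}{2}S(\gamma)\, q_{n-1}.
\]

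Finally, since $q_{n-1}\ge F_{n-1}\to\infty$ and $\log q_{n-1}=o(q_{n-1})$, the right-hand side of the polynomial bound is eventually dominated by the exponential lower bound: for all sufficiently large $n$,
\[
\log q_n \;\le\; (\mu-1)\log q_{n-1}+C_2 \;\le\; \tfrac{1}{2}S(\gamma)\,q_{n-1} \;\le\; \log \rho_{n-1},
\]
which is exactly $q_n\le\rho_{n-1}$, as required. There is no serious obstacle here; the only substantive point to verify is $S(\gamma)>0$, and that is already packaged into Corollary~\ref{strictly-concave} together with the specific values $\rho(A_0)=\rho(A_1)=1$ for the family~(\ref{hmstmat}).
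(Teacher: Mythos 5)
Your proof is correct, and it replaces the paper's key second ingredient with a different one. Both arguments share the same skeleton: the non-Liouville hypothesis combined with $|\gamma-p_n/q_n|<1/(q_nq_{n+1})$ gives a polynomial bound $q_{n+1}\le Cq_n^{\mu-1}$, and the conclusion then follows once one knows that $\log\rho_{n-1}$ grows at least linearly in $q_{n-1}$, since $\log q_n=o(q_n)$. Where you differ is in how that linear lower bound is obtained. The paper derives $\log\rho_n\ge \mathrm{const}\cdot q_n$ from Lemma~\ref{lem:sec8}, i.e.\ from the asymptotic recurrence $\rho_{n+1}\sim\rho_n^{a_{n+1}}\rho_{n-1}$, which mirrors $q_{n+1}=a_{n+1}q_n+q_{n-1}$; this keeps the proof inside the machinery of \S8, which is needed anyway for Proposition~\ref{rig}. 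You instead invoke Proposition~\ref{sfunction}(iii) (equality $q_n^{-1}\log\rho_n=S(p_n/q_n)$ for the cyclically balanced standard words $s_n$), continuity of $S$, and positivity of $S$ on $(0,1)$, the latter following from $S(0)=\log\rho(A_0)=0$, $S(1)=\log\rho(A_1)=0$ for the family \eqref{hmstmat} together with Corollary~\ref{strictly-concave} (a concave function vanishing at both endpoints and not affine on any subinterval is strictly positive in the interior). This is a legitimate and somewhat lighter route: it avoids the technical estimate of Lemma~\ref{lem:sec8} entirely and makes the constant in the exponential lower bound explicit, namely any $c<S(\gamma)$, whereas the paper's version is tied to the specific continued-fraction matrix computations of \S8. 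The only points worth making explicit in a write-up are that $S(0)=S(1)=0$ uses the specific normalisation $\rho(A_0)=\rho(A_1)=1$ of \eqref{hmstmat} (which you do note), and that the $s_n$ are standard, hence cyclically balanced, so that Proposition~\ref{sfunction}(iii) applies with equality; both facts are already established in the paper.
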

\begin{proof}
Since $\ga$ is not Liouville, there exists $\delta>0$
such that
\[
\left|\ga-\frac{p_n}{q_n}\right|\ge\frac1{q_n^{\delta+1}}.
\]
Since
\[
\left|\ga-\frac{p_n}{q_n}\right|\le\frac1{q_n q_{n+1}},
\]
we have $q_{n+1}\le q_n^{\delta}$. Thus, it suffices to show that $q_n^\delta\le \rho_n$ for $n$ large enough. By Lemma~\ref{lem:sec8}, $\rho_n\sim \rho_{n-1}^{a_n}\rho_{n-2}$, whence $\log\rho_n\sim a_n\log\rho_{n-1}+\log\rho_{n-2}$. Consequently, $\log\rho_n\ge \text{const}\cdot q_n$. (Since $q_n=a_nq_{n-1}+q_{n-2}$.) Now the claim follows from the fact that the $q_n$ grow at least exponentially fast, whence $\log q_n \ll q_n$.
\end{proof}

In fact the upper bound $q_n=O(\rho_{n-1})$ holds for ``most'' Liouville numbers as well. Effectively, if this inequality fails, this means that $a_n > A^{a_{n-1}}$ infinitely often for some constant $A>1$, which is an exceptionally strong condition.

\begin{remark}
It is natural to ask whether a formula like (\ref{eq:alphastar}) -- with traces instead of spectral radii -- holds in a more general case of irrational $\ga$ (instead of (\ref{eq:explic}), where the multipliers are irrational). The answer is yes -- provided, for example, the condition $q_n=O(\rho_{n-1})$ holds. Indeed, this condition implies
\[
\left(1+\frac1{\rho_n^2}\right)^{q_{n+1}}\to0\quad n\to\infty,
\]
whence we can replace the spectral radii with the corresponding traces so as to obtain
\begin{equation}\label{eq:traces}
\alpha_\gamma =
\lim_{n\to\infty}\left(\frac{\tau_n^{q_{n+1}}}
{\tau_{n+1}^{q_n}}\right)^{(-1)^n}  = \prod_{n=0}^\infty
\left(\frac{\tau_n^{a_{n+1}}\tau_{n-1}}{\tau_{n+1}}
\right)^{(-1)^nq_n}.
\end{equation}
Note that if the $a_n$ grow extremely fast (for instance, if $a_n=q_{n-1}$), then (\ref{eq:traces}) is false; one can show that if it were true, then $\mathfrak r^{-1}(\gamma)$ would be an interval, contradicting Theorem~\ref{main} (iii).

For $\ga=\frac{3-\sqrt5}2$ the formula (\ref{eq:traces}) is exactly  (\ref{eq:alphastar}), in view of the recurrence relation~(\ref{eq:taun}). Indeed, we have $q_n=F_n$ and
\[
\prod_{n=0}^\infty
\left(\frac{\tau_n\tau_{n-1}}{\tau_{n+1}}
\right)^{(-1)^nF_n}= \prod_{n=0}^\infty
\left(1-\frac{\tau_{n-2}}{\tau_{n-1}\tau_n}
\right)^{(-1)^{n+1}F_n}.
\]
\end{remark}

\begin{remark}
Despite having such a fast convergent infinite product for $\alpha_\gamma$, we still cannot use it to claim that $\alpha_\gamma$ is irrational if $\gamma$ is irrational. Such a result would show that the family~(\ref{hmstmat}) does not contain a counterexample to the rational finiteness conjecture (see \cite{BJ} for more detail).
\end{remark}

\begin{remark}Another natural question is whether there exists a recurrence relation -- or rather a sequence of such relations -- for the $\tau_n$ in the case of a general irrational $\ga$. It can be shown that if $a_n$ and $a_{n+1}$ are fixed, then there will be the same recurrence relation for $\tau_{n+1}$, irrespective of the rest of $a_k$. However, even in the simple case $a_n=a_{n+1}=2$, for instance, we have the relatively unstraightforward identity
\[
\tau_{n+1}=\tau_n^2\tau_{n-1}-\frac{\tau_n^2}{\tau_{n-1}}- \frac{\tau_n\tau_{n-2}}{\tau_{n-1}}-\tau_{n-1}.
\]
And for larger $a_n$ and $a_{n+1}$, it becomes messier, though the two most significant terms are always $\tau_n^{a_{n+1}}\tau_{n-1}-\tau_n^{a_{n+1}}/\tau_{n-1}$, provided the $a_n$ do not grow too fast. The authors are grateful to Kevin Hare for helping them with these computations.
\end{remark}

The following examples yield new explicit parameters $\alpha$ such that the system $\{A_0, \alpha A_1\}$ does not possess the finiteness property:

\begin{example}Put $\ga=\sqrt5-2$. It is algebraic and therefore, not Liouville. Here $\alpha=0.4596704785\dots$
\end{example}

\begin{example}Put $\ga=\sqrt[3]2-1=[3,1,5,1,1,4,1,1,8,1,\dots]$. Here $\alpha=0.5587336687\dots$
\end{example}

\begin{example}As is well known, $e-2=[1,2,1,1,4,1,1,6,1,1,8,\dots]$, which implies that $e$ is not Liouville. Put
\[
\ga=\frac{e-2}{e-1}=0.4180232931\ldots= [2,2,1,1,4,1,1,6,1,1,8,\dots].
 \]
Here $\alpha=0.7904851693\dots$
\end{example}

\bigskip
\footnotesize
\noindent\textit{Acknowledgments.}

The proof of the impossibility of the equation \eqref{contro} was facilitated by discussions which took place on the MathOverflow website. The authors would like to thank I.~Agol and Q.~Yuan for helpful conversations pertaining to this proof, and the administrators of the MathOverflow website for making these interactions possible.

The authors are indebted to Kevin Hare for many stimulating discussions and insights.

Ian Morris was supported as a Postdoctoral Research Fellow by the ERC grant MALADY (AdG 246953).

\bibliographystyle{siam}
\bibliography{DSJSR}

\end{document}